\newtheorem{theorem}{Theorem}[section]
\newcounter{intro}
\newtheorem{introthm}[intro]{Theorem}
\newtheorem{introcor}[intro]{Corollary}
\theoremstyle{definition}
\newtheorem{chunk}[theorem]{}
\newtheorem{question}[theorem]{Question}
\newtheorem{thm}{Theorem}[section]
\newtheorem{lem}[thm]{Lemma}
\newtheorem{prop}[thm]{Proposition}
\newtheorem{cor}[thm]{Corollary}
\theoremstyle{definition}
\newtheorem{definition}[thm]{Definition}
\newtheorem{ex}[thm]{Example}
\newtheorem{rem}[thm]{Remark}
\newtheorem{notation}[thm]{Notation}
\def\Ass{\operatorname{Ass}}
\def\ann{\operatorname{ann}}
\def\image{\operatorname{im}}
\def\im{\image}
\def\ker{\operatorname{ker}}
\newcommand{\Hom}{\mathrm{Hom}}
\newcommand{\p}{\mathfrak{p}}
\newcommand{\Soc}{\mathrm{Soc}}
\def\Ext{\operatorname{Ext}}
\newcommand{\Z}{\mathbb{Z}}
\newcommand{\N}{\mathbb{N}}
\numberwithin{equation}{section}
\newcommand{\xra}[1]{\xrightarrow{#1}}
\newcommand{\del}{\partial}
\def\and{{ \text{ and } }}
\newcommand{\Proj}{\mathrm{Proj}}
\newcommand{\Spec}{\mathrm{Spec}}
\newcommand{\coker}{\mathrm{coker}}
\newcommand{\SHom}{\underline{\mathrm{Hom}}}
\newcommand{\uExt}{\underline{\mathrm{Ext}}}
\newcommand{\uTor}{\underline{\mathrm{Tor}}}
\newcommand{\SSoc}{\text{*}\mathrm{Soc}}
\newcommand{\checkgr}{\vee_{\mathrm{gr}}}
\newcommand{\m}{\mathfrak{m}}
\newcommand{\q}{\mathfrak{q}}
\newcommand*{\bigchi}{\mbox{\Large$\chi$}}
\title{Graded-Injective Modules and Bass Numbers of Veronese Submodules}
\author[Taylor Murray]{Taylor Murray}
\address{Department of Mathematics,
University of Nebraska-Lincoln, 336 Avery Hall, Lincoln, NE 68588, U.S.A.}
\email{tmurray11@huskers.unl.edu}
\subjclass[2020]{Primary: Bass numbers, graded Bass numbers, local cohomology modules, graded rings, graded injective modules.} %Secondary: %fill in
\begin{document}
\begin{abstract}
 Let $R$ be a standard graded, finitely generated algebra over a field, and let $M$ be a graded module over $R$ with all Bass numbers finite. Set $(\--)^{(n)}$ to be the $n$-th Veronese functor. We compute the Bass numbers of $M^{(n)}$ over the ring $R^{(n)}$ for all prime ideals of $R^{(n)}$ that are not the homogeneous maximal ideal in terms of the Bass numbers of $M$ over $R$. As an application to local cohomology modules, we determine the Bass numbers of $H_{I\cap R^{(n)}}^i(R^{(n)})$ over the ring $R^{(n)}$ in the case where $H_I^i(R)$ has finite Bass numbers over $R$ and $I$ is a graded ideal.% In addition, we compute the set of associated primes of the module $H_{I\cap R^{(n)}}^i(R^{(n)})$ when the maximal homogeneous ideal of $R$ is not an associated prime of $H_I^i(R)$.
\end{abstract}
\maketitle

%%%%%%%%%%%%%%%%%%%%%%%%%%%%%%%%%%%%%%

\begin{section}{Introduction}

 All rings are assumed to be Noetherian and commutative with identity, and $k$ will always denote a field. For an $R$-module $M$, let $H_I^i(M)$ denote the \textit{i-th local cohomology module of an $R$-module $M$ supported on an ideal $I\subseteq R$}. Objects in the class of local cohomology modules are often not finitely generated even when $M$ is finitely generated. Therefore, many properties of finitely generated modules are not guaranteed to be present in local cohomology modules. In this manuscript we will be interested in the following finiteness questions asked by Huneke in \cite{MR1165320}:

 \begin{enumerate}
\item When does $H_I^i(R)$ have finite Bass numbers over $R$, and
\item when does $H_I^i(R)$ have finitely many associated primes over $R$?
 \end{enumerate}

% As local cohomology modules are difficult to compute in general theory of Bass numbers. For example, Bass numbers also yield the injective dimension of a  module, which has been studied for local cohomology modules in \cite{Huneke_Sharp_1993, Lyubeznik_1993, Lyubeznik_1997}. In addition, the associated primes of a module can be detected through its 0--th Bass numbers, and much investigation has gone into the finiteness of associated primes of local cohomology modules (e.g \cite{MR1165320,Huneke_Sharp_1993, Lyubeznik_1993, Lyubeznik_1997,MR2058025,MR1922391,Singh2004ptorsionEI,MR2925808}).   Unfortunately, not all local cohomology modules have finite Bass numbers as the following example due to Hartshorne in \cite[pg. 149 -- 151]{Hartshorne_1970} shows.
 
 %However, there are more interesting examples that illustrate both phenomena, as we shall see. First, we take a brief excursion through the basics of local cohomology.

% \noindent For an $R$-module $M$, we let $\mu(\p,i,M)$ denote the \textit{$i$--th Bass number of $M$ at $\p$}, see \cite{Bass_1963} for more details.

Unfortunately, Huneke's questions do not have affirmative answers for all local cohomology modules. As shown by Hartshorne, local cohomology modules need not have finite Bass numbers  \cite[pg. 149 -- 151]{Hartshorne_1970}, nor are they guaranteed to have finitely many associated primes as demonstrated in \cite[Theorem 1.2]{MR1922391}, \cite[pg. 8--9]{Singh2004ptorsionEI}, and \cite[Theorem 1.1]{MR2058025}. On the other hand, it is shown in {{\cite[Theorem 2.4(d)]{Lyubeznik_1993}, \cite[Theorem 2.1]{Huneke_Sharp_1993}, and \cite[Corollary 2.14]{Lyubeznik_1997}}} for a regular local ring, $R$, of equal characteristic, all local cohomology modules $H_I^i(R)$ have a number of finiteness properties; in particular, all $H_I^i(R)$ have finitely many associated primes and finite Bass numbers in this setting. 

Remarkably, the finiteness attributes discussed in the previous paragraph descend to direct summands in many cases. To be exact, let $S\to R$ be a map of Noetherian rings that splits as $S$-modules. For $J$ an ideal of $S$,  Núñez-Betancourt in \cite{MR2925808} shows that a number of finiteness properties for $H_{JR}^i(R)$ descend to local cohomology modules $H_{J}^i(S)$. In particular, it is proven in \cite[Theorem 1.1]{MR2925808} that if the local cohomology module $H_{JR}^i(R)$ has finitely many associated primes over $R$, then $H_{J}^{i}(S)$ has finitely many associated primes over $S$. Moreover, if

\begin{enumerate}
\item[(a)] $S\to R$ is a homomorphism of Noetherian rings that splits as $S$-modules,
\item[(b)] $R$ is Cohen-Macaulay,
\item[(c)] $N$ is an $S$-module and $M$ is an $R$-module such that there is a splitting $N\to M$ of $S$-modules, and
\item[(d)] $M$ has finite Bass numbers over $R$,
\end{enumerate}

\noindent then $N$ has finite Bass numbers over $S$ by \cite[Proposition 3.1]{MR2925808}. In particular, for an ideal $I\subseteq S$, the Bass numbers of $H_I^i(S)$ over $S$ are finite, if $H_{IR}^i(R)$ has finite Bass numbers over $R$. 

A special case of \cite[Proposition 3.1]{MR2925808} is when $R$ is an $\N$-graded ring, $S=R^{(n)}$ for some positive integer $n$, $M$ is a graded $R$-module, and $N=M^{(n)}$, where  $(\--)^{(n)}$ denotes the $n$-th Veronese functor. In this setting, we ask the following question:

%\begin{quest}\label{q1}
%Suppose $R$ is a standard graded, finitely generated $k$-algebra, with $k$ a field. Let $G$ be a finite group, whose order is invertible in $k$, act on $R$ via degree preserving, $k$-linear automorphisms. Suppose $M$ is an $R$-module that has a compatible $G$-action. If $M$ has finite Bass numbers over $R$, then what are the Bass numbers of $M^G=\{m\in M~|~ g\cdot m=m \text{ for all $g\in G$}\}$ over $R^G$?
%\end{quest}

\begin{question}\label{q1}
Let $R$ be a graded, finitely generated algebra over a field and $n$ a positive integer. If a graded $R$-module $M$ has finite Bass numbers over $R$, then what are the Bass numbers of $M^{(n)}$ over $R^{(n)}$? 
\end{question}

In Section \ref{sec5} we prove the following theorem, yielding an answer to Question \ref{q1}:

 %In particular, we investigate the case when $R$ is a graded, finitely generated $k$-algebra with homogeneous generating set for $R$ the degree of each generator co-prime to $n\in \N$. An  example of such an $R$ is a standard graded, finitely generated $k$-algebra. Our main theorem is: 

\begin{introthm}\label{main theorem 1}
Fix $n\in \N$, and let $R$ be a positively graded $k$-algebra generated by finitely many elements of degree coprime to $n$; for example, this holds if $R$ is standard graded. Assume that $M$ is a graded $R$-module with finite Bass numbers over $R$. Then $M^{(n)}$ has finite Bass numbers over $R^{(n)}$. Moreover, if $\p\in \Spec(R)$, and $\p$ is not the homogeneous maximal ideal, then

\[
\mu(i,\p\cap R^{(n)},  M^{(n)})=\begin{cases}
\mu(i,\p, M) & \p \in \Proj(R), \\
0 & \text{$\p$ is not homogeneous and $i=0$}, \text{ or } \\
\mu(i-1,\p^*, M) & \text{$\p$ is not homogeneous and $i\geq 1$}.
\end{cases}
\]

\noindent where $\p^*=(\{r\in \p~|~ \text{$r$ is homogeneous}\})$ is the largest homogeneous prime ideal contained in $\p$.
\end{introthm}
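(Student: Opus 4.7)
The plan is to reduce the (ungraded) Bass numbers $\mu(i,\p\cap R^{(n)},M^{(n)})$ to graded Bass numbers of $M^{(n)}$ at homogeneous primes of $R^{(n)}$, and then to transfer those across the Veronese via a minimal graded-injective resolution. The starting point is the standard graded-vs.-ungraded comparison (presumably set up earlier in the paper): for a graded $T$-module $N$ and a prime $\q\in\Spec T$,
\[
\mu_T(i,\q,N)=\begin{cases} \mu_T^{\mathrm{gr}}(i,\q,N), & \q \text{ homogeneous,} \\ 0, & \q \text{ non-homogeneous, } i=0, \\ \mu_T^{\mathrm{gr}}(i-1,\q^*,N), & \q \text{ non-homogeneous, } i\geq 1.\end{cases}
\]
Applied with $T=R^{(n)}$, $N=M^{(n)}$, $\q=\p\cap R^{(n)}$, this reduces the theorem to two ingredients: (i) analyzing $\p\cap R^{(n)}$ and its homogenization $(\p\cap R^{(n)})^*$ in terms of $\p$ and $\p^*$, and (ii) computing $\mu_{R^{(n)}}^{\mathrm{gr}}(i,\p\cap R^{(n)},M^{(n)})$ in terms of $\mu_R^{\mathrm{gr}}(i,\p,M)$ for $\p\in\Proj(R)$.

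For (i), the identity $(\p\cap R^{(n)})^*=\p^*\cap R^{(n)}$ is immediate because any homogeneous element of $R^{(n)}$ lying in $\p$ lies in $\p^*$ by definition of $\p^*$. The coprime-degree hypothesis makes the extension $R^{(n)}\hookrightarrow R$ integral (each generator $r$ of degree $d$ satisfies $r^n\in R^{(n)}$), so incomparability of primes in integral extensions is available. If $\p$ were non-homogeneous with $\p\cap R^{(n)}$ homogeneous, then $\p\cap R^{(n)}=(\p\cap R^{(n)})^*=\p^*\cap R^{(n)}$ would say that the distinct primes $\p^*\subsetneq\p$ of $R$ share the same contraction to $R^{(n)}$, contradicting incomparability. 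Hence $\p$ non-homogeneous forces $\p\cap R^{(n)}$ non-homogeneous.

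For (ii), take a minimal graded-injective resolution $M\to E^\bullet$ of $M$ over $R$ and apply $(-)^{(n)}$ termwise. The Veronese is exact on graded modules, and the two crucial inputs (presumably established earlier in the paper) are: (a) the Veronese of a graded-injective $R$-module is graded-injective over $R^{(n)}$, and (b) for $\p\in\Proj(R)$, the indecomposable summand $\ast E_R(R/\p)[j]$ of $E^i$ has Veronese isomorphic, up to a degree shift, to $\ast E_{R^{(n)}}(R^{(n)}/\p\cap R^{(n)})$ with multiplicity one. Granted these, $(E^\bullet)^{(n)}$ is a minimal graded-injective resolution of $M^{(n)}$ over $R^{(n)}$, and the equality $\mu_{R^{(n)}}^{\mathrm{gr}}(i,\p\cap R^{(n)},M^{(n)})=\mu_R^{\mathrm{gr}}(i,\p,M)$ for $\p\in\Proj(R)$ drops out by reading off multiplicities. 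The classical bijection $\Proj(R)\leftrightarrow\Proj(R^{(n)})$ via $\p\mapsto\p\cap R^{(n)}$ makes the correspondence on indecomposables unambiguous.

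Combining the two ingredients gives the theorem: for $\p\in\Proj(R)$, $\mu(i,\p\cap R^{(n)},M^{(n)})=\mu^{\mathrm{gr}}(i,\p\cap R^{(n)},M^{(n)})=\mu^{\mathrm{gr}}(i,\p,M)=\mu(i,\p,M)$; for $\p$ non-homogeneous and not maximal, $\p^*\subsetneq\p$ is itself not maximal, so $\p^*\in\Proj(R)$, and for $i\geq 1$,
\[
\mu(i,\p\cap R^{(n)},M^{(n)})=\mu^{\mathrm{gr}}(i-1,\p^*\cap R^{(n)},M^{(n)})=\mu^{\mathrm{gr}}(i-1,\p^*,M)=\mu(i-1,\p^*,M),
\]
while $\mu(0,\p\cap R^{(n)},M^{(n)})=0$. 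Finiteness of the Bass numbers of $M^{(n)}$ at non-maximal primes follows from the hypothesis on $M$. The main technical obstacle is the multiplicity-one statement in (ii-b): one must verify that the Veronese does not split an indecomposable graded-injective hull $\ast E_R(R/\p)$ at $\p\in\Proj(R)$ into several summands over $R^{(n)}$, and the coprime-degree hypothesis is precisely what prevents such splitting.
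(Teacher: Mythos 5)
Your part (i) — the bookkeeping relating $\p\cap R^{(n)}$, $(\p\cap R^{(n)})^*$, and $\p^*\cap R^{(n)}$ via integrality and incomparability — is correct and, combined with the graded-vs.-ungraded comparison (Proposition \ref{graded bass numbers} and Proposition \ref{grade=} in the paper), validly reduces the theorem to the graded statement at homogeneous primes. That reduction matches the paper's final step (Theorem \ref{m}).

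The gap is in part (ii), specifically your assumed input ``(a) the Veronese of a graded-injective $R$-module is graded-injective over $R^{(n)}$.'' That statement is \emph{false}, and the paper explicitly demonstrates this in Example~\ref{Jack's Example}: over $R=k[x,y]$ with $n=2$, the Veronese $(\text{*}E_R(k)(-1))^{(2)}$ is not graded-injective over $R^{(2)}$. The correct statement (Corollary~\ref{C2}) requires $\Ass_R(E)\subseteq \Proj(R)$; indecomposable summands $\text{*}E_R(k)$ of a graded-injective module are precisely the obstruction. Since even when $\Gamma_\m(M)=0$ the later terms $E^1,E^2,\ldots$ of a minimal graded-injective resolution of $M$ generically contain copies of $\text{*}E_R(k)$, the complex $(E^\bullet)^{(n)}$ is typically \emph{not} a graded-injective resolution of $M^{(n)}$ at all, so you cannot read off Bass numbers from it as claimed. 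This is the central difficulty the paper contends with, and your proposal does not engage with it.

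The paper's actual route (Theorem~\ref{main theorem 2}) is an induction on $i$ that isolates the $\m$-torsion: it splits on whether the $\m$-torsion indicator $\bigchi_\m(M)$ is $0$ or positive, uses the short exact sequence $0\to\Gamma_\m(M)\to M\to M/\Gamma_\m(M)\to 0$ together with the localization invariance of Lemma~\ref{clemma} to discard $\m$-torsion when computing at $\p\in\Proj(R)$, and controls the $\m$-torsion contribution to finiteness via graded Matlis duality (Section~\ref{sec4}, culminating in Lemma~\ref{fmt}). The ``shift'' module $A(M)=\coker(\del_M^{\bigchi_\m(M)-1})$ lets the induction advance past the first index where an $\m$-primary summand appears. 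None of this apparatus is present in your proposal. To repair your argument you would at minimum need to homogeneously localize $(E^\bullet)^{(n)}$ at $\p\cap R^{(n)}$ to kill the $\m$-torsion summands before claiming injectivity, verify that the Veronese commutes with that localization, and separately establish the finiteness at the maximal homogeneous ideal of $R^{(n)}$ — at which point you have essentially reconstructed the paper's argument.
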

Tying back to the study of local cohomology, in Section \ref{sec6} we give an application of Theorem \ref{main theorem 1} to local cohomology modules:

\begin{introcor}\label{graded local cohomology thm 2} With $R$ as in Theorem \ref{main theorem 1}, assume $M$ is a graded $R$-module, and $I$ is a graded ideal of $R$. If $H_I^i(M)$ has finite Bass numbers over $R$, then $H_{I\cap R^{(n)}}^i(M^{(n)})$ has finite Bass numbers over $R^{(n)}$. Moreover,

\[
\mu(i,\p\cap R^{(n)}, H_{I\cap R^{(n)}}^j(M^{(n)}))=\begin{cases}
\mu(i,\p, H_{I}^j(M)) & \p\in \Proj(R), \\
0 & \text{$\p$ is not homogeneous and $i=0$, or } \\
\mu(i-1,\p^*, H_{I}^j(M)) & \text{$\p$ is not homogeneous and $i\geq 1$}
\end{cases}.
\]

\

\end{introcor}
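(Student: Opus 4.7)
The plan is to reduce the statement to Theorem \ref{main theorem 1} by first establishing a Veronese/local-cohomology compatibility
$$H^i_{I\cap R^{(n)}}(M^{(n)}) \cong H^i_I(M)^{(n)}$$
of graded $R^{(n)}$-modules for all $i\geq 0$. Once this isomorphism is in hand, the graded $R$-module $N := H^j_I(M)$ has finite Bass numbers over $R$ by hypothesis, so Theorem \ref{main theorem 1} applies directly to $N$ and produces the stated Bass numbers of $N^{(n)} = H^j_{I\cap R^{(n)}}(M^{(n)})$ over $R^{(n)}$, including the three-case split between primes in $\Proj(R)$ and inhomogeneous primes.

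The first step toward the compatibility is the sectionwise identification
$$\Gamma_I(N)^{(n)} = \Gamma_{I\cap R^{(n)}}(N^{(n)})$$
for any graded $R$-module $N$. The inclusion $\subseteq$ is immediate from $I\cap R^{(n)}\subseteq I$. For the reverse inclusion, I fix homogeneous generators $f_1,\dots,f_s$ of $I$; since each $f_\ell^{n}$ has degree a multiple of $n$, we have $f_\ell^n \in I\cap R^{(n)}$, and a standard pigeonhole argument gives $I^{n(s-1)+1}\subseteq (f_1^n,\dots,f_s^n)\subseteq I\cap R^{(n)}$. Thus any element of $N^{(n)}$ killed by a power of $I\cap R^{(n)}$ is killed by a higher power of $I$, proving the equality. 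Consequently $\Gamma_I(-)^{(n)}$ and $\Gamma_{I\cap R^{(n)}}((-)^{(n)})$ agree as left exact functors from graded $R$-modules to graded $R^{(n)}$-modules.

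The second step is to promote this to the derived level. The Veronese functor is exact on graded modules, so the right derived functors of $N\mapsto\Gamma_I(N)^{(n)}$ compute $H^i_I(N)^{(n)}$. To identify the derived functors of $N\mapsto\Gamma_{I\cap R^{(n)}}(N^{(n)})$ with $H^i_{I\cap R^{(n)}}(N^{(n)})$, I would take a graded-injective resolution $M\to J^\bullet$ in the category of graded $R$-modules and apply $(-)^{(n)}$; what must be checked is that $J^{\bullet,(n)}$ is a $\Gamma_{I\cap R^{(n)}}$-acyclic resolution of $M^{(n)}$. This rests on the fact that the Veronese of a graded-injective $R$-module is graded-injective over $R^{(n)}$, which is the structural result underlying the paper's title and should be available from the earlier sections, together with the vanishing of local cohomology in positive degrees on graded-injectives.

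The main obstacle is thus the preservation of graded-injectives under the Veronese functor, but this is the technical core of the paper and is in place by the time this corollary is stated. Given the base-change isomorphism, the three cases of the formula---$\p\in\Proj(R)$, $\p$ inhomogeneous with $i=0$, and $\p$ inhomogeneous with $i\geq 1$---follow verbatim from the corresponding cases of Theorem \ref{main theorem 1} applied to the graded $R$-module $H^j_I(M)$.
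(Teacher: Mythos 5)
Your reduction to Theorem \ref{main theorem 1} applied to $H_I^j(M)$ is exactly the paper's final step, but the paper obtains the base-change isomorphism $H_I^i(M)^{(n)}\cong H_{I\cap R^{(n)}}^i(M^{(n)})$ by citing Brodmann (Proposition \ref{*cohomology}), whereas you attempt a self-contained proof via graded-injective resolutions. Your step one is essentially fine, modulo a slip: the pigeonhole bound should be $I^{s(n-1)+1}\subseteq(f_1^n,\ldots,f_s^n)$, not $I^{n(s-1)+1}$. Step two, however, rests on the assertion that ``the Veronese of a graded-injective $R$-module is graded-injective over $R^{(n)}$,'' and this is false: the paper's Example \ref{Jack's Example} exhibits $R=k[x,y]$ and $E=\text{*}E_R(k)(-1)$ for which $E^{(2)}$ is not *injective over $R^{(2)}$. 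Corollary \ref{C2} gives *injectivity of $E^{(n)}$ only under the hypothesis $\Ass_R(E)\subseteq\Proj(R)$, which typically fails for the terms of a minimal *injective resolution (they generically contain summands supported at $\m$). This is a genuine gap in the argument as written.

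The gap is repairable, because what the derived-functor computation actually needs is $\Gamma_{I\cap R^{(n)}}$-acyclicity of $(J^i)^{(n)}$, not *injectivity. Using Theorem \ref{*structure}, write $J^i\cong A^i\oplus B^i$ with $A^i$ a sum of $\text{*}E_R(R/\p(-k))$ over $\p\in\Proj(R)$ and $B^i$ a sum of shifts of $\text{*}E_R(k)$. Then $(A^i)^{(n)}$ is *injective over $R^{(n)}$ by Corollary \ref{C2}, hence acyclic; and $(B^i)^{(n)}$ is $(\m\cap R^{(n)})$-torsion, hence $(I\cap R^{(n)})$-torsion (as $I\cap R^{(n)}\subseteq\m\cap R^{(n)}$), hence $\Gamma_{I\cap R^{(n)}}$-acyclic since its \v{C}ech complex localizes to zero in positive cohomological degree. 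With this correction your resolution-based proof of the base-change isomorphism closes, and the three-case formula then follows from Theorem \ref{main theorem 1} exactly as you and the paper both conclude.
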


The following corollary is a consequence of \cite[Corollary 1.10]{YassemiSiamak1998Wapu}. By appealing to Corollary \ref{graded local cohomology thm 2}, we provide an alternate proof.

\begin{introcor}\label{graded local cohomology cor 2}
 With the same setup of Corollary \ref{graded local cohomology thm 2}, we have

\[
\Ass_{R^{(n)}}\left(H^i_{I\cap R^{(n)}}\left(M^{(n)}\right)\right)=\left\{\p\cap R^{(n)}~|~ \p \in \Ass_R(H_I^i(M))\right\}.
\]
\end{introcor}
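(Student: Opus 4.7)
The plan is to detect associated primes through the zeroth Bass number: $\q\in\Ass_{S}(N)$ iff $\mu(0,\q,N)>0$. Under this dictionary the set equality follows from Corollary~\ref{graded local cohomology thm 2} specialized to $i=0$. First I would observe that $H_I^i(M)$ is a graded $R$-module (since $I$ and $M$ are graded), so $\Ass_R(H_I^i(M))$ consists only of homogeneous primes; similarly $H^i_{I\cap R^{(n)}}(M^{(n)})$ is a graded $R^{(n)}$-module, so its associated primes are homogeneous in $R^{(n)}$. Hence the problem reduces to matching homogeneous primes on both sides.

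Next, the homogeneous primes of $R$ split as $\Proj(R)\sqcup\{\mathfrak{m}_R\}$, with the analogous decomposition for $R^{(n)}$; contraction $\p\mapsto\p\cap R^{(n)}$ induces a bijection $\Proj(R)\cong\Proj(R^{(n)})$, and $\mathfrak{m}_R\cap R^{(n)}=\mathfrak{m}_{R^{(n)}}$. For $\p\in\Proj(R)$, Corollary~\ref{graded local cohomology thm 2} at $i=0$ yields
\[
\mu\!\left(0,\,\p\cap R^{(n)},\,H^j_{I\cap R^{(n)}}(M^{(n)})\right)=\mu(0,\p,H_I^j(M)),
\]
so the $\Proj(R)$-part of $\Ass_R(H_I^j(M))$ is in bijection with the $\Proj(R^{(n)})$-part of $\Ass_{R^{(n)}}(H^j_{I\cap R^{(n)}}(M^{(n)}))$. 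Moreover, the same corollary forces $\mu(0,\p\cap R^{(n)},\cdot)=0$ for every non-homogeneous $\p\subseteq R$; since every prime of $R^{(n)}$ is the contraction of some prime of $R$ by integrality, this rules out non-homogeneous primes of $R^{(n)}$ from the left-hand side. Together these facts establish the claimed equality away from the maximal homogeneous ideals.

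The remaining case---showing that $\mathfrak{m}_R\in\Ass_R(H_I^i(M))$ iff $\mathfrak{m}_{R^{(n)}}\in\Ass_{R^{(n)}}(H^i_{I\cap R^{(n)}}(M^{(n)}))$---is the main obstacle, as this case lies outside the scope of Theorem~\ref{main theorem 1}. I would first identify $H^i_{I\cap R^{(n)}}(M^{(n)})\cong(H_I^i(M))^{(n)}$ as graded $R^{(n)}$-modules, using additivity of local cohomology on the $R^{(n)}$-module decomposition $M=\bigoplus_{j=0}^{n-1}M^{(n,j)}$ together with $\sqrt{(I\cap R^{(n)})R}=\sqrt{I}$ (which follows from the coprime-degree hypothesis on the generators of $R$). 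The problem then reduces to checking that the socle $\Hom_R(R/\mathfrak{m}_R,H_I^i(M))$ is nonzero iff its Veronese component $\Hom_{R^{(n)}}(R^{(n)}/\mathfrak{m}_{R^{(n)}},(H_I^i(M))^{(n)})$ is nonzero; the coprime-degree generation of $R$ should allow one to adjust the degree of a homogeneous socle element so it lands in the Veronese piece, which is the technical crux of the argument.
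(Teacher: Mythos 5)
Your overall plan --- detecting associated primes through the vanishing or non-vanishing of the zeroth Bass numbers, applying the $i=0$ case of Corollary~\ref{graded local cohomology thm 2} for $\p\in\Proj(R)$, and treating the homogeneous maximal ideal separately --- is the right structure, and it is in the same spirit as the paper's own terse justification (the paper simply cites Theorem~\ref{main theorem 1} together with Proposition~\ref{*cohomology}, i.e.\ its Bass-number formula, which likewise says nothing about $\m$). The $\Proj(R)$ part of your argument is correct. One small simplification: you do not need to invoke Corollary~\ref{graded local cohomology thm 2} at non-homogeneous primes together with integrality to rule those out of the left side; it is enough to note that $H^i_{I\cap R^{(n)}}(M^{(n)})$ is a graded $R^{(n)}$-module, so its associated primes are automatically homogeneous. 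Also, the identity $\sqrt{(I\cap R^{(n)})R}=\sqrt{I}$ does not need the coprime-degree hypothesis; it already follows from $f^n\in I\cap R^{(n)}$ for each homogeneous $f\in I$.

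The real issue is the maximal ideal, which you correctly flag as the crux --- but the fix you propose does not work. You suggest that the coprime-degree generation of $R$ ``should allow one to adjust the degree of a homogeneous socle element so it lands in the Veronese piece.'' A homogeneous socle element $m$ of $N:=H_I^i(M)$ satisfies $\m m=0$; multiplying $m$ by any homogeneous element of positive degree (in particular by any algebra generator of $R$) annihilates it, so there is no way to move $m$ into a degree divisible by $n$. What is actually needed for the $\supseteq$ inclusion at $\m$ is the implication
\[
\Gamma_\m\bigl(H_I^i(M)\bigr)\neq 0 \ \Longrightarrow\ \Gamma_\m\bigl(H_I^i(M)\bigr)^{(n)}\neq 0,
\]
and this can genuinely fail without further hypotheses: take $R=k[x]$ with the standard grading, $n=2$, $M=k(-1)$, $I=(x)$, $i=0$. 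Then $H_I^0(M)=k(-1)$ has $\m=(x)$ as an associated prime, while $M^{(2)}=0$, so $H^0_{I\cap R^{(2)}}(M^{(2)})=0$ has no associated primes at all; the asserted set equality fails. So your last paragraph does not close the gap, and no degree-shifting argument can; you would need to isolate an explicit hypothesis (or additional argument) guaranteeing that the $\m$-torsion submodule of $H_I^i(M)$ has a nonzero graded piece in a degree divisible by $n$. The paper's one-line justification of this corollary, which appeals only to the Bass-number formula away from $\m$, faces the same difficulty.
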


 For a graded $R$ module $M$, the works \cite{RFHF} and \cite{GW} show that it is sufficient to study \textit{graded injective resolutions} of $M$ as a graded $R$-module in order to understand the Bass numbers of $M$. This observation leads us study \textit{graded injective hulls}, denoted by $\text{*}E_R(M)$ (see \cite{RFHF}) in Section \ref{sec2}. There are two key results that are employed in the proof of Theorem \ref{main theorem 1}. First, in Section \ref{sec3}, we prove a key result in the setting of Theorem \ref{main theorem 1} that if $M$ does not have the maximal homogeneous ideal of $R$ as an associated prime, then $\text{*}E_R(M)^{(n)}\cong \text{*}E_{R^{(n)}}(M^{(n)})$ (see Corollary \ref{EUC1}); the assumption that the maximal homogeneous ideal of $R$ is not an associated prime of $M$ is necessary (see Example \ref{Jack's Example}). Second, in Section \ref{sec4}, we use \textit{graded Matlis Duality} (see \cite{BH}) to show that if $M$ is an artinian $R$-module, then $M^{(n)}$ has finite Bass numbers over the ring $R^{(n)}$ (see Corollary \ref{fmt}).

 %In the case when $\p$ is the  homogeneous maximal ideal, denoted $\m$, one can attain a description of $\mu^i(\m \cap R^{(n)},M^{(n)})$ in terms of Betti numbers of particular modules (see Theorem \ref{main theorem 2}). 

\end{section}

%%%%%%%%%%%%%%%%%%%%%%%%%%%%%%%%%%%%%%
\section*{Acknowledgments}
 The author would like to thank his advisor, Jack Jeffries, for helpful and enlightening conversations on the content of this paper. The author would also like to thank Vaibhav Pandey, Josh Pollitz, and Nawaj KC for many helpful comments on earlier drafts of this manuscript. Additionally, the author would like to thank Luis Núñez-Betancourt for bringing \cite{YassemiSiamak1998Wapu} to our attention.

%%%%%%%%%%%%%%%%%%%%%%%%%%%%%%%%%%%%%%
 
\begin{section}{Preliminaries}\label{sec2}
\begin{subsection}{Setting}
Throughout this section we let $R$ be a Noetherian, $\Z$-graded, commutative ring. For a graded $R$-module $M$, we let $M_i$ denote its $i$-th graded piece. A \textit{map of graded $R$-modules} $f:M\to N$ is a $R$-linear map such that $f(M_i)\subseteq N_i$ for all $i\in \Z$; we also call these \textit{graded maps}. We denote the set of \textit{graded prime ideals} of $R$ by $^*\mathrm{Spec}(R)$. For a graded $R$-module $M$ and graded prime $\p$, we denote by $M_{(\p)}$ the localization $S^{-1}M$, with $S=\{r\in R\smallsetminus \p~|~ \text{$r$ is homogeneous}\}$. %Next, we set $\ulkappa{\p}:=R_{(\p)}/\p R_{(\p)}$, and call it \textit{the graded residue field of $R_{(\p)}$}.  %We denote the collection graded $R$-modules maps between $M$ and $N$ by $\SHom_{R}(M,N)$; when $R$ is understood or there is not chance of ambiguity we write shorten notation and write $\SHom_{0}(M,N)$. 

\begin{definition}
Let $R$ be a $\Z$-graded ring. The category of $\Z$-graded $R$-modules, denoted *$R$-mod, is defined to be the category whose objects are graded $R$-modules and morphisms are graded maps of graded $R$-modules.
\end{definition}

For a graded $R$-module $M$ and integer $n$, we define $M(-n)$ to be $M$, as an $R$-module, with grading $[M(-n)]_i=[M]_{i-n}$. Suppose $M$ and $N$ are graded $R$-modules; the ungraded $\Hom_R(M,N)$ may not be a graded $R$-module in a natural way. We define

\[
\SHom_R(M,N):=\bigoplus_{n\in \Z}\Hom_{\text{*}R-\text{Mod}}(M,N(n)),
\]

\noindent which is a graded $R$-module with grading $[\SHom_R(M,N)]_n=\Hom_{\text{*}R-\text{Mod}}(M,N(n))$. For a graded $R$-module $M$, we let $\uExt^\bullet_R(M,\--)$ and $\uExt_R^\bullet(\--,M)$ denote the derived functors of $\SHom_R(M,\--)$ and $\SHom_R(\--,M)$, respectively.

\end{subsection}

%%%%%%%%%%%%%%%%%%%%%%%%%%%%%%%%%%%%
\begin{subsection}{Graded Baer's Criterion}

We study the injective objects of *$R$-mod. Recall that in an arbitrary category $\mathcal{C}$ an object $E$ of $\mathcal{C}$ is an \text{injective object} if for every monomorphism $A\to B$ and morphism $A\to E$, there is a morphism $B\to E$ such that the following diagram commutes

\[
\begin{tikzcd}
A \arrow[r]\arrow[d]& B\arrow[dl] \\
E&
\end{tikzcd}.
\]

\

\begin{definition}\label{definition of *inj}
An injective object $E$ in *$R$-mod is called a *injective $R$ module.
\end{definition}

Using only Definition \ref{definition of *inj}, it is easy to prove the following proposition. 

\begin{prop}\label{*shift}
If $M$ is a *injective $R$-module and $k\in \Z$, then $M(-k)$ is a *injective $R$-module.
\end{prop}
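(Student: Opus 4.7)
The plan is to exploit the fact that the shift functor $(-k)\colon {*}R\text{-mod}\to {*}R\text{-mod}$, sending $N\mapsto N(-k)$, is an autoequivalence of ${*}R\text{-mod}$ with inverse $(k)$. Since an equivalence of categories preserves all categorical properties — in particular injectivity — the conclusion is immediate once this is established. Alternatively, one can argue by a direct diagram chase, which is what I would actually write down.

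To that end, suppose we are given a graded monomorphism $\iota\colon A\hookrightarrow B$ and a graded map $f\colon A\to M(-k)$, and we must produce a graded map $B\to M(-k)$ extending $f$ along $\iota$. I would first note that a graded map $A\to M(-k)$ is, by the definition of the shift, exactly the same data as a graded map $A(k)\to M$ (the underlying $R$-linear map is identical; only the grading bookkeeping is shifted by $k$). Likewise, shifting $\iota$ by $k$ produces a graded monomorphism $\iota(k)\colon A(k)\hookrightarrow B(k)$, since shifting does not alter the underlying $R$-linear map.

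Now I would invoke the *injectivity of $M$ applied to the diagram
\[
\begin{tikzcd}
A(k) \arrow[r, hook, "\iota(k)"]\arrow[d, "f(k)"'] & B(k)\arrow[dl, dashed, "g"] \\
M &
\end{tikzcd}
\]
to obtain a graded map $g\colon B(k)\to M$ with $g\circ \iota(k)=f(k)$. Shifting $g$ by $-k$ yields a graded map $g(-k)\colon B\to M(-k)$ whose composition with $\iota$ recovers $f$, which is exactly the required extension.

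I do not expect any genuine obstacle here: the statement amounts to the observation that the shift functor is an isomorphism of the category ${*}R\text{-mod}$ to itself, so the only thing to be careful about is bookkeeping — namely, to verify explicitly that shifting preserves the properties of being a monomorphism and of commuting with the given maps. Both are immediate from the fact that $(-k)$ affects only the grading indexing and not the underlying $R$-linear data.
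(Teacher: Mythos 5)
Your proof is correct and is exactly the kind of short definitional argument the paper has in mind when it says the result is easy to prove from the definition of a *injective module (the paper itself gives no written proof). The key observation — that the shift functor is an autoequivalence of ${*}R\text{-mod}$, so a graded map $A\to M(-k)$ corresponds bijectively to a graded map $A(k)\to M$ with the same underlying $R$-linear map, and likewise for the monomorphism — is precisely the bookkeeping needed, and your diagram chase carries it out cleanly.
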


Since every monomorphism in *$R$-mod is injective, we attain the following characterization of *injective modules:

\begin{prop}
A graded $R$-module $E$ is *injective if and only if for every graded, injective map $A\to B$ and graded map $A\to E$, there is a graded map $B\to E$ such that the following diagram commutes

\[
\begin{tikzcd}
A \arrow[r]\arrow[d]& B\arrow[dl] \\
C&
\end{tikzcd}.
\]
\end{prop}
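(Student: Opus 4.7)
The plan is to show that the proposition is essentially the statement that the monomorphisms in $^*R$-mod are precisely the graded maps $A\to B$ that are injective as set maps (equivalently, injective on each graded piece). Once this is established, the proposition becomes a direct translation of Definition \ref{definition of *inj}.

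For the easier direction, I would take $f:A\to B$ to be a graded map that is injective as a map of sets, and check directly that it is a monomorphism in $^*R$-mod. Namely, if $g_1,g_2:C\to A$ are graded maps satisfying $f\circ g_1=f\circ g_2$, then set-theoretic injectivity of $f$ gives $g_1(c)=g_2(c)$ for every $c\in C$, hence $g_1=g_2$. This shows every graded injective map is a monomorphism and therefore the lifting property in the proposition is at least as strong as the defining property of a *injective module.

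The converse direction is where the real content sits, and I expect this to be the main (but still modest) obstacle. Given a monomorphism $f:A\to B$ in $^*R$-mod, I would consider the ordinary kernel $K:=\ker(f)$. Because $f$ is a graded $R$-linear map, $K$ is a graded $R$-submodule of $A$, so the inclusion $\iota:K\hookrightarrow A$ and the zero map $0:K\to A$ are both morphisms in $^*R$-mod. They satisfy $f\circ\iota=0=f\circ 0$, and the monomorphism hypothesis forces $\iota=0$, whence $K=0$ and $f$ is injective as a set map. This identifies the class of monomorphisms in $^*R$-mod with the class of graded injective maps.

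With this identification in hand, the proposition follows immediately. If $E$ is *injective in the sense of Definition \ref{definition of *inj}, then it has the lifting property against every monomorphism in $^*R$-mod, and in particular against every graded injective map. Conversely, if $E$ admits the stated lifting property against all graded injective maps, then since every monomorphism $A\to B$ in $^*R$-mod is such a map by the argument above, $E$ satisfies the categorical definition of a *injective object.
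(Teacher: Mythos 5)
Your proof is correct and matches the paper's (implicit) argument: the paper does not give a separate proof of this proposition but simply prefaces it with the remark that every monomorphism in *$R$-mod is injective, which is exactly the nontrivial half of your identification of monomorphisms with graded injective maps. Your write-up spells out both halves of that identification (including the trivial one) and the kernel argument for the nontrivial half, which is precisely what the paper expects the reader to supply.
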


In the category of $R$-modules, a useful tool for detecting injective modules is Baer's Criterion. There is a graded analogue of Baer's Criterion, as well.

\begin{thm}[Graded Baer's Criterion]\label{gbc}
A graded $R$-module $E$ is *injective if and only if for every graded ideal $I$ of $R$, $k\in \Z$, and graded map $f:I(-k)\to E$, there is a graded map $g:R(-k)\to E$ such that the following diagram commutes

\[
\begin{tikzcd}
I(-k) \arrow[r, "\subseteq"]\arrow[d, "f"]& R(-k)\arrow[dl, "g"] \\
E&
\end{tikzcd}.
\]
\end{thm}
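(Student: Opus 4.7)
The plan is to mirror the classical proof of Baer's Criterion, but carried out entirely inside $^*R$-mod, with careful bookkeeping of grading shifts.

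The forward direction is immediate: if $E$ is *injective, then the graded inclusion $I(-k)\hookrightarrow R(-k)$ together with any graded map $f:I(-k)\to E$ is precisely an instance of the lifting property defining *injectivity, so the required extension $g$ exists.

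For the converse, I would take an arbitrary graded injection $\iota\colon A\hookrightarrow B$ and a graded map $f\colon A\to E$, and assemble the set $\mathcal{S}$ of pairs $(M,h)$ where $A\subseteq M\subseteq B$ is a graded submodule and $h\colon M\to E$ is a graded extension of $f$, ordered by $(M,h)\le (M',h')$ iff $M\subseteq M'$ and $h'|_M=h$. Chains in $\mathcal{S}$ have upper bounds (form the union of the submodules and glue the maps), so Zorn's lemma supplies a maximal element $(M_0,g_0)$. The goal is then to show $M_0=B$.

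Assume for contradiction $M_0\subsetneq B$. Since $M_0$ is graded and proper, some homogeneous component of $B$ is not contained in $M_0$; pick a homogeneous element $b\in B\setminus M_0$ of degree $k$. Form the graded colon ideal $I=(M_0:_R b)=\{r\in R:\ rb\in M_0\}$, and define $\varphi\colon I(-k)\to E$ by $\varphi(r)=g_0(rb)$. Because $b$ has degree $k$, the shift $(-k)$ makes $\varphi$ degree-preserving, hence a morphism in $^*R$-mod. By hypothesis, $\varphi$ extends to a graded map $\psi\colon R(-k)\to E$. Then define $h\colon M_0+Rb\to E$ by $h(m+rb)=g_0(m)+\psi(r)$. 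Well-definedness follows from the identity $\psi(r-r')=\varphi(r-r')=g_0((r-r')b)$ whenever $(r-r')b\in M_0$, and $h$ is graded because it is the sum of two graded maps on the graded submodule $M_0+Rb$. This strictly enlarges $(M_0,g_0)$, contradicting maximality, so $M_0=B$ and $E$ is *injective.

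The only delicate step—and the main thing to watch—is the grading bookkeeping: one has to check that picking $b$ homogeneous of degree $k$ is exactly what makes the multiplication-by-$b$ map $I(-k)\to M_0$ degree-zero, so that the induced $\varphi$ lives in the right internal Hom. Everything else is a routine transcription of the classical Baer argument into the graded category, using only that kernels, images, and colons of graded submodules are themselves graded.
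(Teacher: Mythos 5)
Your proof is correct and is essentially the same argument the paper has in mind: the paper simply states that the proof is analogous to Baer's original, and your write-up is precisely that graded transcription, with the key grading-shift bookkeeping (choosing $b$ homogeneous of degree $k$ and using the shift $I(-k)$ so that $r\mapsto g_0(rb)$ is degree-preserving) handled correctly.
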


\begin{proof}

The proof of this is analogous to the one found in \cite[Theorem 1]{Baer_1940}.\qedhere

\end{proof}

%We now apply the graded Baer's Criteria to obtain a collection of *injective modules for a particular class of rings.

Theorem \ref{gbc} can be used to provide examples of *injective $R$-modules:

\begin{cor}
Let $R$ be a $\Z$-graded ring where all homogeneous elements are not zero divisors. Consider the multiplicative set $W=\{r~|~ r\in R \text{ is homogeneous}\}$; then $W^{-1}R$ is a *injective module over $R$.
\end{cor}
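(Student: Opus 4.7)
The plan is to verify the hypothesis of Graded Baer's Criterion (Theorem \ref{gbc}): given any graded ideal $I$ of $R$, any shift $k \in \Z$, and any graded map $f\colon I(-k) \to W^{-1}R$, I want to extend $f$ to a graded map $g\colon R(-k) \to W^{-1}R$. Since such a $g$ is completely determined by its value on the generator $1 \in R_0 = R(-k)_k$, producing $g$ amounts to producing a single homogeneous element $q \in (W^{-1}R)_k$ with the property that $f(x) = x \cdot q$ for every homogeneous $x \in I$.

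To build $q$, I first note that a graded ideal is generated by homogeneous elements, so if $I \neq 0$ (the zero case being trivial with $q = 0$), it contains a nonzero homogeneous element $a \in I$ of some degree $d$. Since $a$ is homogeneous and therefore a non-zero-divisor by hypothesis, I may define
\[
q \;:=\; \frac{f(a)}{a} \;\in\; (W^{-1}R)_k,
\]
using that $f(a) \in (W^{-1}R)_{d+k}$ as $f$ is degree-preserving. The key step is to check this is independent of the choice of $a$: if $b \in I$ is another nonzero homogeneous element, then $ab = ba \in I$ and $R$-linearity of $f$ gives $b f(a) = f(ba) = f(ab) = a f(b)$, so cross-multiplying in $W^{-1}R$ (valid since $a,b \in W$) yields $f(a)/a = f(b)/b$.

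With $q$ well-defined, I verify the extension property. For any homogeneous $x \in I$ and any nonzero homogeneous $a \in I$, we have $a f(x) = f(ax) = x f(a) = x \cdot a \cdot q = a \cdot (x q)$, so canceling the non-zero-divisor $a$ gives $f(x) = xq$. Extending by $R$-linearity over all homogeneous components shows $f$ agrees with multiplication by $q$ on all of $I$, so setting $g(r) := rq$ for $r \in R(-k)$ gives the required graded extension and makes the diagram commute.

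The main technical point — and really the only step beyond bookkeeping — is the well-definedness of $q$, which is exactly where the hypothesis that all homogeneous elements of $R$ are non-zero-divisors is used. Once this is in hand, everything else is routine degree-tracking and the verification is essentially the classical proof that the total ring of fractions is injective, adapted to the graded category via Theorem \ref{gbc}.
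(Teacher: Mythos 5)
Your argument is essentially the same as the paper's: both verify the hypothesis of Graded Baer's Criterion by fixing a nonzero homogeneous element $a\in I$, defining the extension by $g(1)=a^{-1}f(a)\in W^{-1}R$, and then checking agreement with $f$ on all of $I$ via the cross-multiplication identity $bf(a)=f(ab)=af(b)$. The only cosmetic difference is that you explicitly phrase the independence of the choice of $a$ as a separate "well-definedness" step, while the paper fixes a single generator $x_\alpha$ up front and shows $f(x_\alpha)x_\alpha^{-1}=f(x_\beta)x_\beta^{-1}$ directly in the course of verifying $g|_{I(-k)}=f$; these are the same computation. Your proof is correct.
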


\begin{proof}
Let $I(-k)$ be a shift of a graded ideal of $R$. Set $I=(x_{\alpha})_{\alpha\in A}$ with each $x_{\alpha}$ homogeneous. Suppose that $f:I(-k)\to W^{-1}R$ is a graded map. Fix some $x_\alpha$. Define $g:R(-k)\to W^{-1}R$ by $g(1)=x_\alpha^{-1} f(x_{\alpha})$ and extend $R$-linearly. Since $f$ is graded and $x_\alpha$ is homogeneous, it is a quick check to see that $g$ is graded.

\

We need only show that $g|_{I(-k)}=f$. It suffices to see that $f(x_{\beta})=g(x_{\beta})$ for all elements $\beta \in A$. To this end, $f(x_\alpha x_\beta)=f(x_\beta x_\alpha)$, implying $f(x_\alpha)x_\beta=f(x_\beta)x_\alpha$. Thus, we have $g(1)=f(x_\alpha)x_\alpha^{-1}=f(x_\beta)x_\beta^{-1}$. Therefore, for all $\beta\in A$, we have 

\[
g(x_{\beta})=x_\beta g(1)=f(x_\beta)x_\beta^{-1}=f(x_{\beta}).
\]

\

\noindent Hence, $g|_{I(-k)}=f$,  and the graded Baer's Criterion implies $W^{-1}R$ is *injective, as desired.
\end{proof}
\

In general, *injective modules are not injective as ungraded modules. One indication of this is in the graded Baer's criterion;
the maps and ideals considered must be graded, so there are fewer maps that need to extend since there are generally fewer graded maps and ideals than in the non-graded case. Below is an explicit example of a *injective $R$-module that is not an injective $R$-module.

\begin{ex}
Let $R=k[x,y]$ a polynomial ring with the standard grading. The graded module $W^{-1}R$, where $W=\{r\in R~|~ \text{$r$ is homogeneous}\}$, is *injective over the graded ring $R$ by Corollary 2.4; however, it is not injective over $R$.  To see the latter statement, consider the ideal $I=(x-1)$ in $R$. Define the map $f:I\to W^{-1}R$ by $f(x-1)=1$ and extend $R$-linearly. We show that this does not extend to a map $g:R\to W^{-1}R$. If, for sake of contradiction, it did extend, then

\[
1=g(x-1)=(x-1)g(1),
\]

\

\noindent which cannot happen since $x-1$ is not a unit in $R$. Thus, no such extension of $f$ exists.
\end{ex}

\end{subsection}

%%%%%%%%%%%%%%%%%%%%%%%%%%%%%%%%%%%%%%

\begin{subsection}{*Essential Extensions}
Injective hulls play an essential role in the category of $R$-modules, where $R$ is Noetherian. In this case, Matlis in \cite[Theorem 2.5]{Matlis_1958} showed that every injective module is a direct sum of injective hulls. To state the analogous theorem in the graded case, we need some preliminary definitions.

\begin{definition}\cite[\S 1, pg. 189]{RFHF}
\begin{enumerate}
\item [(a)] Let $M\subseteq N$ be graded $R$-modules. We say that $N$ is an *essential extension of $M$ if for every nonzero, graded submodule $U$ of $N$, we have that $U\cap M$ is nonzero.

\item [(b)] Suppose $M\subseteq N$ is an *essential extension such that for every *essential extension $T$ of $N$, we have $T=N$. Then we say that $N$ is a maximal *essential extension of $M$.
\end{enumerate}
\end{definition}

\begin{rem}
To show that $M\subseteq N$ is *essential, it suffices to show that for all nonzero homogeneous $n\in N$, we have that $Rn\cap M\neq (0)$. In what is to follow, we often use this fact often without mention.
\end{rem}

We collect some well-known facts about *essential extensions. The proofs of these are along the same lines as the ungraded case with the proper adjustments.

\begin{lem}\label{*subsets}\label{*ass}

Let $M, N$, and $E$ be graded $R$-modules. 
\begin{enumerate}
\item Suppose that $M\subseteq N \subseteq E$. Then $M\subseteq E$ is *essential if and only if $M\subseteq N$ and $N\subseteq E$ are *essential. 

\item Let $M\subseteq N$ be a *essential extension of graded $R$-modules. Then there is an equality of sets *$\Ass(M)=$*$\Ass(N)$.
\end{enumerate}
\end{lem}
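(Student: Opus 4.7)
The plan is to mirror the standard ungraded arguments, being attentive to grading throughout: the definition of *essential extension only tests against nonzero \emph{graded} submodules, and elements of ${}^*\Ass(-)$ arise as annihilators of \emph{homogeneous} elements, so both parts reduce to checking things on homogeneous generators.

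For part (1), I would verify both implications directly from the definition. Assuming $M\subseteq E$ is *essential: given a nonzero graded submodule $U\subseteq N$, it is also a nonzero graded submodule of $E$, so $U\cap M\neq 0$; and given a nonzero graded submodule $U\subseteq E$, we have $U\cap M\neq 0$ with $U\cap M\subseteq U\cap N$. Conversely, assuming both sub-extensions are *essential, for a nonzero graded submodule $U\subseteq E$, *essentiality of $N\subseteq E$ gives $U\cap N\neq 0$; then *essentiality of $M\subseteq N$ applied to the nonzero graded submodule $U\cap N$ of $N$ yields $(U\cap N)\cap M\neq 0$, which equals $U\cap M$ because $M\subseteq N$.

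For part (2), the inclusion ${}^*\Ass(M)\subseteq {}^*\Ass(N)$ is immediate, since any homogeneous element of $M$ witnessing a prime also lies in $N$. For the reverse, take $\p\in{}^*\Ass(N)$ with $\p=\ann_R(n)$ for some homogeneous $n\in N$. The cyclic graded submodule $Rn$ is isomorphic to the graded shift $(R/\p)(-\deg n)$. By *essentiality, the nonzero graded submodule $Rn\cap M$ contains a nonzero homogeneous element of the form $rn$ with $r\in R$ homogeneous and $r\notin\p$; since $R/\p$ is a graded domain, $\ann_R(rn)=\p$, placing $\p$ in ${}^*\Ass(M)$.

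The main subtlety is the last step: one must extract from the nonzero graded submodule $Rn\cap M$ of the graded cyclic module $(R/\p)(-\deg n)$ a homogeneous generator whose annihilator is exactly $\p$. This is where the domain property of $R/\p$ (a consequence of $\p$ being a graded prime) is used — multiplication by any homogeneous $r\notin\p$ is injective on $R/\p$, so $\ann_R(rn)=\ann_R(n)=\p$. Otherwise the argument is a routine grading-sensitive adaptation of the ungraded proof.
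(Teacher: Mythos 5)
Your proof is correct and is precisely the routine adaptation of the ungraded argument that the paper invokes without spelling out; the key grading-sensitive point you isolate (that a nonzero homogeneous element of $Rn\cap M$ can be taken of the form $rn$ with $r$ homogeneous, $r\notin\p$, and then $\ann_R(rn)=\p$ because $\p$ is prime) is exactly the ``proper adjustment'' needed.
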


In the ungraded case, maximal essential extensions are unique up to isomorphism and injective. An analogue statement of this fact holds in *$R$-mod.

\begin{thm}[{\cite[Theorem 3.6.2]{BH}}]\label{*iso}
Let $M$ be a graded $R$-module. Then any two maximal *essential extensions are isomorphic. Moreover, the maximal *essential extension is *injective. 
\end{thm}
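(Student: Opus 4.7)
The plan is to mirror Matlis's classical construction of injective hulls in the graded category. Two ingredients are needed: first, that $*R$-mod has enough injectives, so every graded module embeds in a *injective module (one verifies this via a graded analogue of $\SHom_{\Z}(R,-)$ applied to an injective cogenerator of $\Ab$); and second, transitivity of *essential extensions, which is Lemma~\ref{*subsets}(1).

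For existence, fix an embedding $M \hookrightarrow E$ with $E$ *injective, and apply Zorn's Lemma to the poset of graded submodules $N \subseteq E$ containing $M$ for which $M \subseteq N$ is *essential. The union of a chain is again *essential because any homogeneous element of the union lies in some term of the chain, where by hypothesis it has a nonzero $R$-multiple in $M$. Zorn then produces an $N$ maximal inside $E$ with this property. To upgrade this to maximality in the global sense of the definition, let $N \subseteq T$ be any *essential extension and extend the inclusion $N \hookrightarrow E$ to a graded map $\phi : T \to E$ using the *injectivity of $E$. Since $\ker \phi \cap N = 0$ and $N$ is *essential in $T$, we get $\ker \phi = 0$; then $\phi(T) \subseteq E$ is *essential over $M$ by Lemma~\ref{*subsets}(1), so maximality of $N$ inside $E$ forces $\phi(T) = N$ and hence $T = N$.

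Next I would show this $N$ is *injective. Embed $N$ into some *injective $E'$ and choose by Zorn a graded submodule $K \subseteq E'$ maximal with $K \cap N = 0$. I claim the induced map $N \hookrightarrow E'/K$ is *essential. For a nonzero homogeneous $\bar x \in E'/K$, strict maximality of $K$ gives $(K + Rx) \cap N \neq 0$ in $E'$, and a nonzero element $n = k + rx$ there, with $r$ taken as a homogeneous component, produces $0 \neq \bar n = r \bar x \in R\bar x \cap N$ in $E'/K$. Thus $E'/K$ is a *essential extension of $N$; by the previous paragraph, $E'/K = N$, so $E' = N \oplus K$ and $N$ is a direct summand of a *injective module, hence *injective.

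For uniqueness, given two maximal *essential extensions $N_1$ and $N_2$ of $M$, use the *injectivity of $N_2$ to extend the inclusion $M \hookrightarrow N_2$ to a graded map $\phi : N_1 \to N_2$. The kernel meets $M$ trivially and so vanishes by *essentialness of $M \subseteq N_1$; the image $\phi(N_1)$ is *injective and is *essential in $N_2$ via $M \subseteq \phi(N_1) \subseteq N_2$ together with Lemma~\ref{*subsets}(1), so the resulting split $N_2 = \phi(N_1) \oplus K'$ forces $K' = 0$, giving an isomorphism $N_1 \cong N_2$. The main technical obstacle, and the only point where the grading genuinely interferes, is the verification that $N \hookrightarrow E'/K$ is *essential: one must be careful to take homogeneous lifts of $\bar x$ and to pass to a homogeneous component of the scalar $r$ so that every constructed submodule remains graded.
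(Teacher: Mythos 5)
Your argument is correct and is essentially the standard Matlis-style construction (enough injectives, Zorn for a maximal *essential $N$ inside a fixed *injective $E$, realization of $N$ as a direct summand of a *injective via a maximal graded complement $K$, then uniqueness via splitting), which is exactly the route taken in the Bruns--Herzog reference the paper cites in lieu of giving a proof. You also correctly isolate the only grading-specific subtlety, namely passing to homogeneous lifts of $\bar x$ and homogeneous components of the scalar $r$ in the $E'/K$ step so that all submodules in sight remain graded.
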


In light of Theorem \ref{*iso}:

\begin{definition}
 \cite[\S 1, pg. 289--290]{RFHF} We denote by *$E_R(M)$ the maximal *essential  extension of  a graded $R$-module $M$ and say that *$E_R(M)$ is the *injective hull of $M$.
\end{definition}

Just as ungraded injective hulls behave well under localization, *injective hulls behave well under homogeneous localization, as the next few results show.

\begin{lem}\label{*injective localizes}\label{*essential localizes}
Let $R$ be a Noetherian $\Z$-graded ring and $S\subseteq R$ a multiplicative set of homogeneous elements.

\begin{enumerate}

\item If $E$ is a *injective $R$-module, then $S^{-1}E$ is a *injective $S^{-1}R$-module.

\item Suppose $M\to N$ a *essential extension of $R$-modules. Then the homomorphism $S^{-1}M\to S^{-1}N$ is a *essential extension of $S^{-1}R$-modules. In particular, *$E_{S^{-1}R}(S^{-1}M)=S^{-1}\text{*}E_R(M)$.

\end{enumerate}
\end{lem}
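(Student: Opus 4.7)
\textbf{For part (1)}, I will verify the graded Baer Criterion (Theorem~\ref{gbc}) for $S^{-1}E$. Given a graded ideal $\mathfrak{J} \subseteq S^{-1}R$, integer $k$, and graded map $f\colon \mathfrak{J}(-k) \to S^{-1}E$, note that $\mathfrak{J}$ is the extension of some graded ideal $I = (x_1, \ldots, x_m) \subseteq R$, finitely generated by Noetherianity. Writing $f(x_i/1) = e_i/t$ with a common homogeneous denominator $t \in S$, I aim to define a graded $R$-linear map $h\colon I(-k) \to E$ by $h(x_i) = t' e_i$, where $t' \in S$ is chosen to kill the finitely many generating syzygies of $I$: each relation $\sum_i r_{ji} x_i = 0$ gives $\sum_i r_{ji} e_i/t = 0$ in $S^{-1}E$, so it is killed in $E$ by some $t'_j \in S$, and $t' := \prod_j t'_j$ works uniformly. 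The $*$injectivity of $E$ then extends $h$ to a graded map $g\colon R(-k) \to E$, and scaling the localization of $g$ by $(tt')^{-1} \in S^{-1}R$ recovers the desired extension of $f$. The main bookkeeping step is the simultaneous choice of the denominator $t$ and the syzygy-killer $t'$.

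\textbf{For the essentialness statement in part (2)}, take a nonzero graded $S^{-1}R$-submodule $U \subseteq S^{-1}N$ and a homogeneous nonzero $u = y/s \in U$; then $y \in N$ is non-$S$-torsion. My goal is to find $r \in R$ with $ry \in M$ and $ry$ non-$S$-torsion, so that $ry/s \in U \cap S^{-1}M$ is nonzero. Using that $*$essentialness descends to cyclic graded submodules, the graded ideal $I := (M :_R y)$ is $*$essential as an ideal of $R/J$, where $J := \ann_R(y)$. The main obstacle is ruling out that $I \subseteq J^{\operatorname{sat}} := \{r \in R : sr \in J \text{ for some } s \in S\}$, which I plan to address by graded primary decomposition. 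Writing $J = \bigcap_i \mathfrak{q}_i$ irredundantly with $\mathfrak{p}_i := \sqrt{\mathfrak{q}_i}$ graded, we have $J^{\operatorname{sat}} = \bigcap_{\mathfrak{p}_i \cap S = \varnothing} \mathfrak{q}_i$. Since $J \cap S = \varnothing$, not every $\mathfrak{p}_i$ can meet $S$; if none does, $J^{\operatorname{sat}} = J$ contradicts $I \supsetneq J$ at once. Otherwise $B := \{i : \mathfrak{p}_i \cap S \ne \varnothing\}$ is nonempty, and irredundancy makes $K := \bigcap_{i \in B} \mathfrak{q}_i$ strictly contain $J$, so $*$essentialness of $I/J$ forces $I \cap K \supsetneq J$; but $I \cap K \subseteq J^{\operatorname{sat}} \cap K = J$, a contradiction. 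Hence $I \not\subseteq J^{\operatorname{sat}}$, yielding the required $r$.

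\textbf{For the ``in particular'' clause}, part (1) shows $S^{-1}\text{*}E_R(M)$ is $*$injective over $S^{-1}R$, and applying the essentialness result to the $*$essential extension $M \hookrightarrow \text{*}E_R(M)$ shows $S^{-1}M \hookrightarrow S^{-1}\text{*}E_R(M)$ is $*$essential. Since a $*$injective $*$essential extension of $S^{-1}M$ must be its $*$injective hull (a standard Matlis-style argument: any strictly larger $*$essential extension would split off the $*$injective piece and contradict $*$essentialness, so the extension is maximal, hence equals $\text{*}E_{S^{-1}R}(S^{-1}M)$ by Theorem~\ref{*iso}), this identifies $S^{-1}\text{*}E_R(M)$ with $\text{*}E_{S^{-1}R}(S^{-1}M)$.
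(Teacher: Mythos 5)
Your argument is correct. The paper itself offers no proof here beyond the remark that the statements ``follow as in the ungraded case,'' so there is no written-out argument to compare against; your proposal supplies exactly the sort of adaptation the paper is deferring to. Two small bookkeeping points worth noting but not invalidating: in part (1) the map $h$ defined by $h(x_i)=t'e_i$ lands in $E$ but is graded as a map $I(-k-\deg(tt'))\to E$ rather than $I(-k)\to E$, which is harmless since *injectivity is shift-invariant (Proposition~\ref{*shift}); and in part (2) you implicitly use that a graded ideal of a Noetherian $\Z$-graded ring admits an irredundant primary decomposition by graded primary ideals (so that $K$ is graded and $K/J$ is a legitimate nonzero graded submodule of $R/J$ to test *essentialness against), which is standard but should be cited. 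With those noted, the common-denominator/syzygy-killer construction for the graded Baer criterion and the saturation-versus-primary-decomposition argument for *essentialness are both sound, and the ``in particular'' clause then follows from Theorem~\ref{*iso} exactly as you say.
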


\begin{proof}
The proofs of these statements follow as in the ungraded case.
\end{proof}

%\begin{lem}\label{*essential localizes}
%Let $R$ be a *Noetherian graded ring, $M\to N$ a *essential extension of graded $R$-modules, and $S\subseteq R$ a multiplicatively closed set of homogeneous elements in $R$. Then $S^{-1}M\to S^{-1}N$ is a *essential extension of graded $S^{-1}R$-modules. In particular *$E_{S^{-1}R}(S^{-1}M)=S^{-1}\text{*}E_R(M)$.
%\end{lem}

%\begin{proof}
%This follows as in the ungraded case, but we include the details for the convenience of the reader. 

%\

%Let $n/t\in S^{-1}N$ be nonzero and homogeneous. Set $L=Rn$. Assume, for sake of contradiction, that $S\cap \p\neq \emptyset$ for all $\p\in \Ass_R(L)$. Then as $R$ is *Noetherian, we see that $\Ass_{S^{-1}R}(S^{-1}L)=\emptyset$, so $S^{-1}L=0$, a contradiction to $n/t\neq 0$. Hence, there is a $\q\in \Ass_R(L)$ such that $S\cap \q=\emptyset$. Since $\q$ is homogeneous, there is $rn\in L$ such that $\ann_R(rn)=\q$ and $rn$ is homogeneous. 

%\

%Since $M\to N$ is *essential and $rn$ is homogeneous, then there is a homogeneous $r'\in R$ such that $r'rn:=sn$ is $M$ and $sn\neq 0$. Since $\ann_R(rn)=\q$, we have that $\ann_R(sn)=\q$. Indeed, if $t\in \ann_R(sn)$, then $tr'\in \ann_R(rn)$, so $t\in \q$ or $r'\in \q$. However, $sn=r'rn\neq 0$, so $r'\notin \q$. Hence, $t\in \q$, as desired. Since $\ann_R(sn)=\q$, and $S\cap \q=\emptyset$, we have that $sn/t\neq 0$ in $S^{-1}M$; thus, $S^{1}M \to S^{-1}N$ is *essential, as needed.  

%\

%The in particular part follows from Lemma \ref{*injective localizes}.
%\end{proof}

\begin{prop}\label{*minimal localizes}
Let $R$ be a graded ring, $M$ a graded $R$-module with a minimal *injective resolution $E^\bullet$ over $R$, and $S$ a multiplicative set of $R$ consisting of homogeneous elements. Then $S^{-1}E^\bullet$ is a minimal *injective resolution of $S^{-1}M$ over $S^{-1}R$.
\end{prop}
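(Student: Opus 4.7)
The plan is to combine exactness of localization with the two parts of Lemma \ref{*essential localizes} and the standard characterization of a minimal *injective resolution as one in which each cosyzygy sits inside its term as a *essential submodule.

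First I would check that $S^{-1}E^\bullet$ is a *injective resolution of $S^{-1}M$. Since localization at a multiplicative set of homogeneous elements is exact on $\text{*}R\text{-mod}$, applying $S^{-1}(-)$ to the exact sequence $0\to M\to E^0\to E^1\to\cdots$ yields an exact sequence of graded $S^{-1}R$-modules $0\to S^{-1}M\to S^{-1}E^0\to S^{-1}E^1\to\cdots$. Each $S^{-1}E^i$ is *injective over $S^{-1}R$ by Lemma \ref{*injective localizes}(1). So $S^{-1}E^\bullet$ is a *injective resolution of $S^{-1}M$ over $S^{-1}R$.

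Next I would address minimality. Recall that a *injective resolution $E^\bullet$ of $M$ is minimal precisely when $M\hookrightarrow E^0$ is a *essential extension and, for each $i\geq 1$, the inclusion $\operatorname{im}(d^{i-1})\hookrightarrow E^i$ is *essential; equivalently, $E^0=\text{*}E_R(M)$ and $E^i=\text{*}E_R(\operatorname{im}(d^{i-1}))$ for each $i\geq 1$. By Lemma \ref{*essential localizes}(2), *essential extensions are preserved under homogeneous localization, and by exactness of localization we have $S^{-1}\operatorname{im}(d^{i-1})=\operatorname{im}(S^{-1}d^{i-1})$. Hence each inclusion $\operatorname{im}(S^{-1}d^{i-1})\hookrightarrow S^{-1}E^i$ is *essential over $S^{-1}R$, and similarly $S^{-1}M\hookrightarrow S^{-1}E^0$ is *essential. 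This is exactly the minimality condition for $S^{-1}E^\bullet$.

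I do not foresee a real obstacle here: the statement is essentially formal once one packages Lemma \ref{*essential localizes} with exactness of $S^{-1}(-)$ and the essential-extension characterization of minimality. The only thing that requires a small amount of care is making sure the graded essential-extension characterization of minimality is invoked correctly (and not, say, the Bass-number definition), but this is standard and follows from Theorem \ref{*iso} together with Lemma \ref{*subsets}.
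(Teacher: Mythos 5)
Your proof is correct and takes essentially the same route the paper intends: the paper's proof consists of the single sentence ``This is a consequence of Lemma \ref{*injective localizes},'' and your write-up simply spells out that consequence by combining exactness of homogeneous localization with the two parts of that lemma (preservation of *injectivity and of *essential extensions) and the cosyzygy characterization of minimality.
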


\begin{proof}
This is a consequence of Lemma \ref{*injective localizes}.
\end{proof}

\begin{prop}[{\cite[Lemma 4.5, Lemma 4.7, Corollary 4.9]{RFHF}}]\label{*hull localization 1}

Suppose that $R$ is a graded ring and that $\p,\q\in \text{*}\Spec(R)$. 

\begin{enumerate}

\item There are isomorphisms of $R_{(\p)}$-modules
\[
\text{*}E_R(R/\p)\cong \text{*}E_{R}(R/\p)_{(\p)}\cong \text{*}E_{R_{(\p)}}(R_{(\p)}/\p R_{(\p)}). 
\]
\item There is an isomorphism of *$R_{(\p)}$-modules
\[
\SHom_R(R/\p, \text{*}E_R(R/\p))_{(\p)}\cong R_{(\p)}/\p R_{(\p)}.
\]
\item There are isomorphisms of of $R_{(\p)}$-modules
\[
\begin{aligned}
\SHom_R(R/\p,\text{*}E_R(R/\q))_{(\p)}&\cong \SHom_{R_{(\p)}}((R/\p)_{(\p)},\text{*}E_R(R/\q)_{(\p)})\\ &\cong \begin{cases} R_{(\p)}/\p R_{(\p)}, & \p=\q\\
0 & \p\neq \q.
\end{cases}
\end{aligned}
\]
\end{enumerate}
\end{prop}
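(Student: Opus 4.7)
The plan is to prove the three parts in order, bootstrapping each from the previous. The core input is an analysis of how a homogeneous element $s \in R \setminus \p$ acts on $\text{*}E_R(R/\p)$, together with Lemma \ref{*essential localizes} for compatibility of *injective hulls with homogeneous localization.

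For (1), the second isomorphism is immediate from Lemma \ref{*essential localizes} applied with $S$ the set of homogeneous elements of $R \setminus \p$, using $S^{-1}(R/\p) = R_{(\p)}/\p R_{(\p)}$. For the first isomorphism, I will show that every homogeneous $s \in R \setminus \p$ acts bijectively on $E := \text{*}E_R(R/\p)$; this forces $E$ to already be an $R_{(\p)}$-module and makes the natural map $E \to E_{(\p)}$ an isomorphism. Injectivity of $\mu_s$: its kernel is a graded submodule, and its intersection with $R/\p$ is zero since $s$ is a nonzero divisor on $R/\p$, so by *essentiality $\ker(\mu_s) = 0$. Surjectivity: view $\mu_s$ as a graded injection $E(-\deg s) \hookrightarrow E$; by Proposition \ref{*shift}, $E(-\deg s)$ is *injective, so the identity of $E(-\deg s)$ extends through $\mu_s$ to a graded retraction $h \colon E \to E(-\deg s)$. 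Then $E = sE \oplus \ker(h)$, and $s \cdot \ker(h) \subseteq sE \cap \ker(h) = 0$. If $\ker(h) \neq 0$, *essentiality would produce a nonzero homogeneous element of $R/\p$ annihilated by $s$, contradicting $s \notin \p$; hence $\ker(h) = 0$ and $sE = E$.

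For (2), identify $\SHom_R(R/\p, E)$ with the graded $\p$-torsion submodule of $E$ by the usual description of maps out of a cyclic module. Since $R/\p$ is finitely presented and homogeneous localization is exact, $\SHom$ commutes with localization at $\p$, and by (1) the target becomes $\widetilde{E} := \text{*}E_{R_{(\p)}}(R_{(\p)}/\p R_{(\p)})$. Write $\kappa := R_{(\p)}/\p R_{(\p)}$, a graded field. For any homogeneous $x \in \widetilde{E}$ annihilated by $\p R_{(\p)}$, the cyclic submodule $R_{(\p)} x$ is graded-isomorphic to $\kappa(-\deg x)$, hence graded-simple; *essentiality forces $R_{(\p)} x \cap \kappa \neq 0$, and simplicity then forces $R_{(\p)} x \subseteq \kappa$, so $x \in \kappa$. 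The reverse inclusion is trivial.

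For (3), the first isomorphism again comes from $\SHom$ commuting with localization. For the target: the case $\p = \q$ reduces to (2); if $\q \not\subseteq \p$, choose a homogeneous $s \in \q \setminus \p$, so $s/1$ is a unit in $R_{(\p)}$ lying in $\q R_{(\p)}$, forcing $\q R_{(\p)} = R_{(\p)}$ and $\text{*}E_R(R/\q)_{(\p)} = 0$; if $\q \subsetneq \p$, pick a homogeneous $s \in \p \setminus \q$, so $s/1 \in \p R_{(\p)} \setminus \q R_{(\p)}$, and applying (1) to the ring $R_{(\p)}$ with prime $\q R_{(\p)}$ shows $s/1$ acts injectively on $\text{*}E_R(R/\q)_{(\p)} = \text{*}E_{R_{(\p)}}(R_{(\p)}/\q R_{(\p)})$, while also annihilating any $\p R_{(\p)}$-torsion, so that torsion is zero. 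The main obstacle throughout is the surjectivity half of (1): it does not follow from formal manipulation but relies on Proposition \ref{*shift} to keep the shifted module in the class of *injectives, together with the *essentiality argument on the kernel of the retraction; once that step is settled, (2) and (3) reduce to standard flat-base-change and case-splitting arguments.
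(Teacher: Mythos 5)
The paper states this proposition as a known result, citing Fossum--Foxby \cite{RFHF} for all three parts, and does not include its own proof; there is therefore no internal argument to compare against. Your proof is correct and self-contained, and it follows the standard line of argument that mirrors ungraded Matlis theory.

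A few remarks on where the weight of your argument lies. The surjectivity half of part~(1) is the genuine content, and your mechanism is sound: since $\ker(h)$ is an $R$-submodule it is closed under multiplication by $s$, so $s\cdot\ker(h)\subseteq sE\cap\ker(h)=0$, and then *essentiality of $R/\p\hookrightarrow E$ together with $s\notin\p$ kills $\ker(h)$. An equivalent and perhaps slightly more conceptual finish is available: $sE\cong E(-\deg s)$ is *injective and contains the *essential image $s(R/\p)$, so $sE\subseteq E$ is *essential, and a *injective module admits no proper *essential extension, forcing $sE=E$. Either works. In part~(2), the observation that $R_{(\p)}x\cong\kappa(-\deg x)$ is graded-simple is exactly where the $H$-simplicity of $\kappa=R_{(\p)}/\p R_{(\p)}$ enters (as in Proposition~\ref{*local structure}); this is the graded-field step that has no nontrivial ungraded analogue. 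Part~(3) is the clean case split one expects, and your appeal to part~(1) over the base-changed ring $R_{(\p)}$ with prime $\q R_{(\p)}$ in the case $\q\subsetneq\p$ is the right way to avoid redoing the torsion-freeness argument from scratch.
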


%\begin{prop}\label{*hull localization 3}
%Let $R$ be a graded ring and $\p\in \text{*}\Spec(R)$. Then

%\[
%\Hom_R(R/\p, \text{*}E_R(R/\p))_{(\p)}\cong R_{(\p)}/\p R_{(\p)}.
%\]
%\end{prop}

%\

%\begin{prop}\label{*hull localization 2}
%Let $R$ be a graded ring and $\p,\q \in \text{*}\Spec(R)$. Then

%\[
%\SHom_R(R/\p,\text{*}E_R(R/\q))_{(\p)}\cong \SHom_{R_{(\p)}}((R/\p)_{(\p)},\text{*}E_R(R/\q)_{(\p)}) \cong \begin{cases} \p & \p=\q\\
%0 & \p\neq \q.
%\end{cases}
%\]
%\end{prop}

The following proposition is elementary but important.

\begin{prop}\label{soih}
Let $M$ be a $\Z$-graded module and $p\in \text{*Spec}(R)$. Then
\[
\text{*}E_R(M(-k))=\text{*}E_R(M)(-k).
\]
\end{prop}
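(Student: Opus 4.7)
My plan is to verify that $\text{*}E_R(M)(-k)$ satisfies the defining property of the *injective hull of $M(-k)$, namely that it is a maximal *essential extension, and then invoke the uniqueness clause of Theorem \ref{*iso}.

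First I would record the general principle that the graded shift $(-)(-k)\colon \text{*}R\text{-mod}\to \text{*}R\text{-mod}$ is an autoequivalence, with quasi-inverse $(-)(k)$. Consequently it preserves graded monomorphisms, intersections of graded submodules, and (by Proposition \ref{*shift}) *injective objects. Using this, I would show that shifting preserves *essentiality: if $M\subseteq N$ is *essential and $U\subseteq N(-k)$ is a nonzero graded submodule, then $U(k)\subseteq N$ is nonzero and graded, so $U(k)\cap M\neq 0$ by hypothesis; reshifting yields $U\cap M(-k)\neq 0$.

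Applying this to the *essential extension $M\hookrightarrow \text{*}E_R(M)$ gives that $M(-k)\subseteq \text{*}E_R(M)(-k)$ is a *essential extension whose target is *injective. To finish, I would promote this to a \emph{maximal} *essential extension: if $\text{*}E_R(M)(-k)\subseteq T$ is any further *essential extension of $M(-k)$, then *injectivity of $\text{*}E_R(M)(-k)$ forces the inclusion to split in *$R$-mod, so $T=\text{*}E_R(M)(-k)\oplus T'$ for some graded submodule $T'$. Since $M(-k)$ lies inside $\text{*}E_R(M)(-k)$, the submodule $T'$ meets $M(-k)$ trivially, and *essentiality of $T$ over $M(-k)$ forces $T'=0$. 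Thus $\text{*}E_R(M)(-k)$ is a maximal *essential extension of $M(-k)$, and Theorem \ref{*iso} identifies it with $\text{*}E_R(M(-k))$.

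The only nontrivial step is the splitting argument used to promote a *essential *injective extension to a maximal *essential extension; this is the graded analogue of a standard fact from Matlis theory and is essentially already built into the reference \cite[Theorem 3.6.2]{BH} underlying Theorem \ref{*iso}, so I expect no real obstacle. The shift parameter $\p\in \text{*}\Spec(R)$ in the statement appears to be vestigial and plays no role in the argument.
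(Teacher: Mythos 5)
Your proof is correct and follows essentially the same route as the paper: one invokes Proposition \ref{*shift} to see $\text{*}E_R(M)(-k)$ is *injective, observes that shifting preserves *essentiality so that $M(-k)\subseteq \text{*}E_R(M)(-k)$ is *essential, and concludes via Theorem \ref{*iso}. You merely spell out the details (why shifting preserves *essentiality, and the splitting argument showing a *injective *essential extension is maximal) that the paper leaves implicit, and you are right that the hypothesis $\p\in\text{*}\Spec(R)$ in the statement is unused.
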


\begin{proof}
By Proposition \ref{*shift}, we have that $\text{*}E_R(M)(-k)$ is *injective. Moreover, since the inclusion $M \subseteq \text{*}E_R(M)$ is *essential, then $M(-k) \subseteq \text{*}E_R(M)(-k)$ is *essential. 
\end{proof}

Analogous with injective $R$-modules, every *injective graded $R$-module $M$ is isomorphic to shifted copies of indecomposable *injective hulls.

\begin{thm}[{\cite[Theorem 3.6.3]{BH}\cite[Theorem 4.8]{RFHF}}]\label{*structure} 
Let $R$ be a Noetherian, $\Z$-graded ring, and $M$ a *injective $R$-module. Then there is a graded isomorphism of $R$-modules

\[
M\cong \bigoplus_{\substack{\mathfrak{p}\in \mathrm{\text{*}Spec}(R) \\ k\in \Z}}\text{*}E_R(R/\mathfrak{p}(-k))^{\text{*}\eta(\p,k,M)},
\]

\noindent where each $\eta(\p,k,M)$ is a cardinal number.
\end{thm}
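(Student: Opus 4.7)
The plan is to adapt Matlis' classical decomposition theorem for injective modules to the $\Z$-graded setting, carried out in three steps: indecomposability of *$E_R(R/\p(-k))$, classification of indecomposable *injectives, and a Zorn's lemma decomposition.

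Step 1: Indecomposability of *$E_R(R/\p(-k))$. The key observation is that $R/\p$ is a graded domain, so any two nonzero homogeneous-element-containing graded submodules of $R/\p(-k)$ intersect nontrivially (given homogeneous nonzero $\bar a, \bar b \in R/\p(-k)$ with shifts absorbed, $\bar a \bar b \neq 0$ lies in both cyclic submodules). Suppose $\text{*}E_R(R/\p(-k)) = A \oplus B$ with $A,B$ nonzero graded submodules. By *essentialness of the inclusion $R/\p(-k) \subseteq \text{*}E_R(R/\p(-k))$ (part of Theorem~\ref{*iso} via the definition of $\text{*}E_R$), both $A \cap R/\p(-k)$ and $B \cap R/\p(-k)$ are nonzero graded submodules of $R/\p(-k)$, so by the domain observation they meet nontrivially, contradicting $A \cap B = 0$.

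Step 2: Every indecomposable *injective has the claimed form. Let $E$ be a nonzero indecomposable *injective. Since $E \neq 0$, the set $^*\!\Ass_R(E)$ is nonempty, so there is a homogeneous element $x \in E$ of some degree $k$ with annihilator $\p \in {}^*\!\Spec(R)$. This yields a graded injection $R/\p(-k) \hookrightarrow E$. By *injectivity of $E$ applied to the inclusion $R/\p(-k) \subseteq \text{*}E_R(R/\p(-k))$, we extend to a graded map $\varphi\colon \text{*}E_R(R/\p(-k)) \to E$. The *essentialness of $R/\p(-k) \subseteq \text{*}E_R(R/\p(-k))$ forces $\ker \varphi = 0$ (any nonzero graded submodule of the domain would meet $R/\p(-k)$ nontrivially, contradicting injectivity on $R/\p(-k)$). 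Thus $\text{*}E_R(R/\p(-k))$ embeds as a *injective, hence *direct summand, graded submodule of $E$. Indecomposability forces equality.

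Step 3: Decomposition of an arbitrary *injective $M$. Apply Zorn's lemma to the poset of families $\{M_\alpha\}$ of indecomposable *injective graded submodules of $M$ whose internal sum is direct; pick a maximal family with sum $N = \bigoplus_\alpha M_\alpha \subseteq M$. One shows $N$ is itself *injective (the graded analogue of Bass' theorem, which follows from the graded Baer's Criterion \ref{gbc} together with the Noetherian hypothesis: any graded map from a graded ideal into $N$ has image generated by finitely many homogeneous elements, each of which lies in finitely many summands, so the image sits in a finite subsum, which is *injective as a finite direct sum). Consequently $M = N \oplus N'$ as graded $R$-modules. If $N' \neq 0$, Step 2 applied to a *essential envelope inside $N'$ of a submodule $R/\p(-k) \hookrightarrow N'$ produces a nonzero indecomposable *injective summand of $N'$, contradicting maximality. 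Hence $M = N$, and collecting isomorphic indecomposable summands into cardinal-indexed powers yields the claimed isomorphism.

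The main obstacle I anticipate is Step 3, specifically verifying that a direct sum of *injective graded modules is *injective. This is the graded counterpart of Bass' theorem and requires a careful application of the graded Baer's criterion together with the Noetherian hypothesis on $R$ to reduce any testing diagram to a finite subsum; everything else is a largely formal translation of Matlis' arguments once *essentiality and *injectivity are available.
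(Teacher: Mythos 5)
The paper does not supply its own proof of Theorem~\ref{*structure}; it is cited directly from \cite[Theorem 3.6.3]{BH} and \cite[Theorem 4.8]{RFHF}. Your three-step argument (indecomposability of $\text{*}E_R(R/\p(-k))$ via $R/\p$ being a graded domain; every nonzero indecomposable *injective arises as such a hull via a graded associated prime and the *essential/*injective extension argument; Zorn plus the graded Bass--Papp theorem that direct sums of *injectives are *injective over a Noetherian graded ring) is the standard graded adaptation of Matlis's structure theorem, which is exactly what the cited references carry out, so the proposal is correct and consistent with the source the paper relies on.
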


\begin{definition}
Let $R$ be a Noetherian, $\Z$-graded ring, and $M$ a *injective graded $R$-module. Suppose $\text{*}E_M^\bullet$ is a minimal *injective resolution of $M$. Then for each $i$, by Theorem \ref{*structure}, we may write

\[
\text{*}E_M^i\cong \bigoplus_{\substack{\mathfrak{p}\in \mathrm{\text{*}Spec}(R) \\ k\in \Z}}\text{*}E_R(R/\mathfrak{p}(-k))^{\text{*}\mu(i,\p,k,M)}.
\]

\

We set $\text{*}\mu(i,\p,M)=\sum_k \text{*}\mu(i,k,\p,M)$ and call it the $i$-th *Bass number of $M$ at $\p$.
\end{definition}

\begin{rem}\label{r1}
There are instances when $\text{*}E_R(R/\p)(-n)\cong \text{*}E_R(R/\p)(-k)$ with $n \neq k$. We shall study when this happens in the next subsection.
\end{rem}

%The following is lemma is Corollary 3.6.7 in \cite{BH}.

%\begin{lem}
%Let $\mathfrak{m}$ be a maximal graded ideal of *$R$, then 

%\[
%E_R(R/\frak{m})\cong \text{*}E_R(R/\frak{m}).
%\]
%\end{lem}

Remarkably, there is no difference between *Bass numbers and Bass numbers for graded modules over graded primes; the following propositions will be used in the proof of Theorem \ref{main theorem 1}.

\begin{prop}[{\cite[Corollary 4.9]{RFHF}}]\label{graded bass numbers}
Let $R$ be a graded ring, $M$ a graded $R$-module, and $\p$ a graded prime ideal of $R$. Then

\begin{enumerate}
\item The group $\Ext_R^i(R/\p, M)_{(\p)}$ is a free, graded $R_{(\p)}/\p R_{(\p)}$-module of rank $\text{*}\mu (i,\p,M)$.
\item $\text{*}\mu (i,\p,M)=\mu(i,\p,M)$.
\end{enumerate}
\end{prop}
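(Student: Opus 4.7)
The plan is to read off both claims simultaneously from a minimal *injective resolution of $M$ by applying $\SHom_R(R/\p,-)$ and then homogeneously localizing at $\p$.

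First, I would take a minimal *injective resolution $E^\bullet$ of $M$, so that by Theorem \ref{*structure},
\[
E^i \;\cong\; \bigoplus_{\substack{\q\in \text{*}\Spec(R)\\ k\in \Z}} \text{*}E_R(R/\q)(-k)^{\text{*}\mu(i,\q,k,M)},
\]
using Proposition \ref{soih} to pull the shift outside. Now apply $\SHom_R(R/\p,-)$ and homogeneously localize at $\p$. By Proposition \ref{*hull localization 1}(3), the summand indexed by $(\q,k)$ contributes $(R_{(\p)}/\p R_{(\p)})(-k)$ when $\q=\p$ and vanishes otherwise. Therefore
\[
\SHom_R(R/\p,E^i)_{(\p)} \;\cong\; \bigoplus_{k\in\Z} (R_{(\p)}/\p R_{(\p)})(-k)^{\text{*}\mu(i,\p,k,M)},
\]
which is already a free graded $R_{(\p)}/\p R_{(\p)}$-module of rank $\text{*}\mu(i,\p,M)=\sum_k \text{*}\mu(i,\p,k,M)$.

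The key point, and the main technical step, is that the differentials of the complex $\SHom_R(R/\p, E^\bullet)_{(\p)}$ are \emph{zero}. This is exactly the graded analogue of the standard fact that minimal injective resolutions produce zero differentials in $\Hom_R(R/\p,-)_{\p}$: since $E^{i+1}$ is the *injective hull of $\coker(E^{i-1}\to E^i)$, any graded map $R/\p \to E^{i+1}$ whose image lies in the kernel (equivalently, in the image of $E^i$) must vanish after passing to the $\p$-homogeneous localization, because elements of $(R\smallsetminus \p)$ consisting of homogeneous elements act as units on every summand $\text{*}E_R(R/\p)(-k)_{(\p)}$ other than those factoring through $\p$. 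Granting this, $\uExt^i_R(R/\p,M)_{(\p)}$ equals the $i$-th term of the above complex, yielding part (1).

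For part (2), I would pass from homogeneous to ordinary localization. The ordinary $\Ext^i_R(R/\p,M)_\p$ is obtained from $\uExt^i_R(R/\p,M)_{(\p)}$ by further inverting the elements of $R_{(\p)}\smallsetminus \p R_{(\p)}$; since $R_{(\p)}/\p R_{(\p)}$ becomes the residue field $\kappa(\p)=R_\p/\p R_\p$ under this further localization, part (1) gives
\[
\Ext^i_R(R/\p,M)_\p \;\cong\; \kappa(\p)^{\text{*}\mu(i,\p,M)},
\]
so $\mu(i,\p,M)=\text{*}\mu(i,\p,M)$. I expect the main obstacle to be a careful justification of the minimality statement (vanishing of the induced differentials), which requires tracing through how *injective hulls of $R/\p(-k)$ interact with $\SHom_R(R/\p,-)_{(\p)}$; once that is in place, both assertions follow immediately.
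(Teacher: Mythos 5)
The paper does not actually prove this proposition; it is cited from Fossum--Foxby as \cite[Corollary~4.9]{RFHF}. That said, the paper does independently establish the key technical ingredient you single out — vanishing of the differentials of $\SHom_R(R/\p, E^\bullet)_{(\p)}$ when $E^\bullet$ is a minimal *injective resolution — in Corollaries \ref{foward dir} and \ref{graded bass number dimension}, and your overall strategy (decompose $E^i$ via Theorem \ref{*structure}, apply $\SHom_R(R/\p,-)$ and homogeneously localize, invoke Proposition \ref{*hull localization 1}, show the differentials vanish, then pass from $R_{(\p)}/\p R_{(\p)}$ to the residue field $R_\p/\p R_\p$ for part (2)) is the correct and standard one, matching what the paper does elsewhere.

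However, your account of the minimality step is garbled and would not survive scrutiny as written. You claim that a graded map $R/\p \to E^{i+1}$ whose image lies in the image of $E^i$ (i.e., in $\im(\del^{i+1})$) must vanish after homogeneous localization; but in fact minimality gives the containment $\Soc_{R_{(\p)}/\p R_{(\p)}}(E^{i+1}_{(\p)}) \subseteq \im(\del^{i+1}_{(\p)})$, so the image of a socle map generally does land in $\im(\del^{i+1})$ without being zero. The correct argument runs one index lower: for $\phi \in \SHom_R(R/\p, E^i)_{(\p)}$, the element $\phi(\bar 1)$ lies in $\Soc_{R_{(\p)}/\p R_{(\p)}}(E^i_{(\p)})$; since $E^i_{(\p)}$ is the *injective hull of $E^{i-1}_{(\p)}/\im(\del^{i-1}_{(\p)})$ and $R_{(\p)}/\p R_{(\p)}$ is $H$-simple (so every nonzero homogeneous element is a unit of $R_{(\p)}$), one gets $\Soc(E^i_{(\p)}) \subseteq \im(\del^i_{(\p)})$, and by exactness $\im(\del^i_{(\p)}) = \ker(\del^{i+1}_{(\p)})$; hence $\del^{i+1}\circ\phi = 0$. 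This is precisely the chain of containments carried out in the paper's proof of Corollary \ref{graded bass number dimension}. You honestly flag this step as the main obstacle, but the sketch you give of it needs to be replaced rather than merely filled in.
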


In conjunction with Proposition \ref{graded bass numbers} the following result characterizes all Bass numbers of a graded module over any prime in terms of its *Bass numbers.

\begin{prop}[{\cite[Theorem 1.1.2]{GW}}]\label{grade=}
Let $R$ be a $\Z$-graded ring and $M$ a graded $R$-module. Then for all $\p\in \Spec(R)$, we have

\[
\mu(i,\p,M)=
\begin{cases}
\text{*}\mu(i,\p,M) & \p\in \text{*}\Spec(R)\\
\text{*}\mu(i-1, \p, M) & \text{$\p$ is not homogeneous and $i>0$}\\
0 &  \text{$\p$ is not homogeneous and $i=0$},
\end{cases}
\]

\

\noindent where $\p^*=(\{r\in \p~|~\text{ $r$ is homogeneous}\})$.
\end{prop}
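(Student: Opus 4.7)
The plan is to split into two cases according to whether $\p$ is homogeneous; the graded case is immediate, while the non-graded case reduces to it via the DVR structure of $R_\p/\p^* R_\p$.

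\textbf{Case 1} (graded $\p$). The identity $\mu(i,\p,M) = \text{*}\mu(i,\p,M)$ is exactly Proposition \ref{graded bass numbers}(2), so nothing further is required.

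\textbf{Case 2} (non-graded $\p$). The central structural observation I would establish first is that $R_\p/\p^* R_\p$ is a DVR. My approach is to reduce modulo $\p^*$ and work in the graded Noetherian domain $R/\p^*$: the prime $\p/\p^*$ contains no nonzero homogeneous element, since any homogeneous element of $\p$ already lies in $\p^*$. Thus all nonzero homogeneous elements of $R/\p^*$ are units in $R_\p/\p^* R_\p$, so this local ring is a further localization of the graded fraction ring $K$ of $R/\p^*$. Because $\p/\p^*$ is non-graded, the degree group of $K$ must be nonzero, and the standard description of graded fraction rings of $\Z$-graded Noetherian domains gives $K \cong K_0[t,t^{-1}]$, a PID. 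Localizing a PID at a nonzero prime yields a DVR, so $R_\p/\p^* R_\p$ is a DVR.

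With the DVR in hand, I would pick a uniformizer $\pi \in R_\p/\p^* R_\p$ and lift it to $\tilde\pi \in R_\p$, producing the short exact sequence of $R_\p$-modules
\[
0 \to R_\p/\p^* R_\p \xrightarrow{\tilde\pi} R_\p/\p^* R_\p \to \kappa(\p) \to 0.
\]
Applying $\Hom_{R_\p}(-, M_\p)$ and using $\Ext^i_{R_\p}(R_\p/\p^* R_\p, M_\p) = \Ext^i_R(R/\p^*, M)_\p$ gives a long exact sequence connecting $\Ext^\bullet_{R_\p}(\kappa(\p), M_\p)$ to $\Ext^\bullet_R(R/\p^*, M)_\p$ via multiplication by $\tilde\pi$. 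By Proposition \ref{graded bass numbers}(1), $\Ext^i_R(R/\p^*, M)_{(\p^*)}$ is a free module of rank $\text{*}\mu(i,\p^*, M)$ over $L := R_{(\p^*)}/\p^* R_{(\p^*)}$; since all homogeneous elements outside $\p^*$ are invertible in $R_\p$, the map $R_{(\p^*)} \to R_\p$ is a localization and $L \otimes_{R_{(\p^*)}} R_\p = R_\p/\p^* R_\p$. Tensoring therefore shows that $\Ext^i_R(R/\p^*, M)_\p$ is free of rank $\text{*}\mu(i,\p^*, M)$ over the DVR $R_\p/\p^* R_\p$, so multiplication by $\tilde\pi$ is injective with cokernel isomorphic to $\kappa(\p)^{\text{*}\mu(i,\p^*, M)}$.

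Chasing the long exact sequence then delivers the formula: injectivity of $\tilde\pi$ at degree zero forces $\Hom_{R_\p}(\kappa(\p), M_\p) = 0$, giving $\mu(0,\p,M) = 0$; and for $i \geq 1$, $\Ext^i_{R_\p}(\kappa(\p), M_\p)$ is identified with the cokernel of $\tilde\pi$ on $\Ext^{i-1}_R(R/\p^*, M)_\p$, yielding $\mu(i,\p,M) = \text{*}\mu(i-1,\p^*, M)$. The main obstacle is the structural DVR claim: once $R_\p/\p^* R_\p$ is known to be a DVR and Proposition \ref{graded bass numbers} is applied, the remaining long exact sequence chase is essentially formal. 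That DVR step, resting on the classification of graded fraction rings of $\Z$-graded Noetherian domains and on $\p^*$ being the unique prime immediately below a non-graded $\p$, is the technical heart of the argument.
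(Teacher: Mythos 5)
Your proposal is mathematically correct, and it is worth noting at the outset that the paper itself does not supply a proof of this proposition: it is stated with a citation to \cite[Theorem 1.1.2]{GW} (Goto--Watanabe), where the argument is the standard one along exactly the lines you give. So your reconstruction is an accurate account of the intended proof rather than a genuinely different route.

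The two load-bearing steps are handled properly. The DVR claim follows because the ring $R_{(\p^*)}/\p^* R_{(\p^*)}$ is $H$-simple and hence either a field or $k[t,t^{-1}]$ by Proposition \ref{*local structure}; since $\p$ is non-graded one has $\p R_\p \neq \p^* R_\p$ (using that $\p^*$ is prime), so $R_\p/\p^* R_\p$ has a nonzero maximal ideal and the field case is excluded, leaving a localization of the PID $k[t,t^{-1}]$ at a nonzero prime, which is a DVR. The freeness of $\Ext^i_R(R/\p^*,M)_\p$ over that DVR is exactly the base-change of Proposition \ref{graded bass numbers}(1) along the further localization $R_{(\p^*)}\to R_\p$, which is legitimate because homogeneous elements outside $\p^*$ are already units in $R_\p$. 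The long exact sequence chase from $0\to A\xrightarrow{\pi} A\to\kappa(\p)\to 0$ then gives $\Hom_{R_\p}(\kappa(\p),M_\p)=0$ and $\Ext^i_{R_\p}(\kappa(\p),M_\p)\cong \kappa(\p)^{\text{*}\mu(i-1,\p^*,M)}$ for $i\geq 1$, exactly as claimed. One small point of bookkeeping worth flagging: the displayed formula in the proposition as printed reads $\text{*}\mu(i-1,\p,M)$ in the second case, but it should be $\text{*}\mu(i-1,\p^*,M)$ (the paper's trailing note defining $\p^*$, and its use of $\p^*$ in Theorem A, confirm this); your proof correctly proves the $\p^*$ version.
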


\end{subsection}

%%%%%%%%%%%%%%%%%%%%%%%%%%%%%%%%%%%%%%

\begin{subsection}{Isomorphism Classes of Indecomposable *Injective Modules}
In this section we classify when $\text{*}E_R(R/\p)(-n)\cong \text{*}E_R(R/\p)(-k)$ with $n \neq k$. Using this, we will state a ``uniqueness-like" statement for *Bass numbers. We point out that Remark \ref{r1} prohibits a uniqueness statement akin to the uniqueness statement for ordinary Bass numbers.

 \begin{definition} 
 Let $R$ be a graded ring, $M$ a graded module, and $k\in \Z$. Consider the element $m\in M$ and $m\in M(-k)$. We denote by $\deg_{-k}(m)$ the degree of the element $m\in M(-k)$. In other words $\deg_{-k}(m)=\deg(m)+k$.
 \end{definition}

 In the next lemma, we classify when two different shifts of a graded ring are isomorphic as graded $R$-modules. This will aid us in determining when two different shifts of *injective hulls are isomorphic as graded $R$-modules.

\begin{lem}\label{2.3L1}
Let $R$ be a graded ring and $n,k\in \Z$. Then $R(-n)\cong R(-k)$ in *$R$-mod if and only if there is a unit of degree $k-n$ in $R$.  
\end{lem}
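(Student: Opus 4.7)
The plan is to exploit the fact that any graded $R$-linear map $\varphi\colon R(-n)\to R(-k)$ is entirely determined by where it sends $1 \in R$, since such a map is $R$-linear and $R(-n)$ is generated by $1$ as an $R$-module. The crux of the argument is then to track what degree $\varphi(1)$ must have: because $1 \in R_0$ sits in degree $n$ of $R(-n)$, any graded map must send $1$ to an element of degree $n$ in $R(-k)$, namely an element of $R_{n-k}$.

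For the forward direction, I would assume $\varphi\colon R(-n)\to R(-k)$ is a graded isomorphism and set $u := \varphi(1) \in R_{n-k}$. Surjectivity forces $u$ to be a unit: choose $s \in R$ with $\varphi(s) = 1$, and compute $su = s\,\varphi(1) = \varphi(s) = 1$, so $u$ has a two-sided inverse in $R$. Then $u^{-1}$ is a unit of degree $k-n$, as required.

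For the converse, given a unit $u \in R$ of degree $k-n$, I would define $\varphi\colon R(-n) \to R(-k)$ to be multiplication by $u^{-1}$, and verify that it is a graded isomorphism. Multiplication by the unit $u^{-1}$ is always an $R$-module isomorphism on the underlying ungraded $R$-module $R$, so the only thing to check is that it respects the grading: if $r \in [R(-n)]_i = R_{i-n}$, then $u^{-1} r \in R_{(i-n)+(n-k)} = R_{i-k} = [R(-k)]_i$, so $\varphi$ is a degree-preserving $R$-linear bijection.

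The argument is essentially a bookkeeping exercise with the grading conventions, so no step is a genuine obstacle; the only place to be careful is in writing down the degree shift correctly (the unit's degree is $k-n$, not $n-k$, once we pass from $\varphi(1)$ to its inverse), and in justifying unit-ness purely from surjectivity of a map between free modules of rank one.
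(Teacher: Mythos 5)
Your proof is correct and follows essentially the same route as the paper: track the degree of $\varphi(1)$ (equivalently, of $\varphi^{-1}(1)$), use surjectivity/injectivity of a map $R\to R$ to get unit-ness, and in the converse define the map as multiplication by the inverse unit and verify it is degree-preserving. The only cosmetic difference is that the paper extracts the unit of degree $k-n$ directly as $\varphi^{-1}(1)$, whereas you first take $u=\varphi(1)\in R_{n-k}$ and then pass to $u^{-1}$; this is the same computation.
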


\begin{proof}
Suppose that there is a graded isomorphism $\phi: R(-n)\to R(-k)$. Then there is a $r\in R(-n)$ such that $r\phi(1)=\phi(r)=1$. Since $\phi$ is graded, then it follows that $\deg_{-n}(r)=\deg_{-k}(1)=k$. Thus, $\deg(r)+n=k$, implying $\deg(r)=k-n$. Moreover, since $r\phi(1)=1$, we have that $r$ is a unit in $R$.

On the other hand, suppose there is a unit of degree $k-n$ in $R$, so its inverse is of degree $n-k$ in $R$, call it $s$. Define the map $\phi:R(-n)\to R(-k)$ by $\phi(1)=s$ and extending $R$-linearly. Then $\deg_{-n}(1)=n$ and $\deg_{-k}(s)=(n-k)+k=n$. Therefore $\phi$ is graded. Moreover, $\phi$ is readily seen to be an isomorphism since $s$ is a unit in $R$.
\end{proof}

\begin{prop}\label{2.3P1}
Let $R$ be a graded ring, $\p\in \text{*}\Spec(R)$ and $n,k\in \Z$. The following are equivalent:

\begin{enumerate}
\item $\text{*}E_R(R/\p)(-n)\cong \text{*}E_R(R/\p)(-k)$.
\item $R_{(\p)}/\p R_{(\p)}(-n)\cong R_{(\p)}/\p R_{(\p)}(-n)$ as $R_{(\p)}$-modules and hence as $R_{(\p)}/\p R_{(\p)}$-modules.
\item There is a unit of $R_{(\p)}/\p R_{(\p)}$ of degree $k-n$.
\end{enumerate}
\end{prop}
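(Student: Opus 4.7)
The plan is to prove the chain (1) $\Leftrightarrow$ (2) $\Leftrightarrow$ (3), deriving (2) $\Leftrightarrow$ (3) as an immediate application of Lemma \ref{2.3L1} and using the localization results of Proposition \ref{*hull localization 1} for the equivalence (1) $\Leftrightarrow$ (2).

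For (2) $\Leftrightarrow$ (3), I would simply apply Lemma \ref{2.3L1} to the $\Z$-graded ring $R_{(\p)}/\p R_{(\p)}$. Since this ring is a graded field, the lemma directly identifies graded isomorphisms between $(R_{(\p)}/\p R_{(\p)})(-n)$ and $(R_{(\p)}/\p R_{(\p)})(-k)$ with the existence of a unit of degree $k-n$. (The fact that an isomorphism as $R_{(\p)}$-modules of modules annihilated by $\p R_{(\p)}$ is automatically an isomorphism as $R_{(\p)}/\p R_{(\p)}$-modules is a trivial observation.)

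For (1) $\Rightarrow$ (2), I would apply the graded functor $\SHom_R(R/\p, -)_{(\p)}$ to both sides. The key observation, direct from the definition of $\SHom$, is that $\SHom_R(M, N(-j)) \cong \SHom_R(M, N)(-j)$ for any $j \in \Z$. Combining this with Proposition \ref{*hull localization 1}(2), the isomorphism $\text{*}E_R(R/\p)(-n)\cong \text{*}E_R(R/\p)(-k)$ translates into the required graded isomorphism $(R_{(\p)}/\p R_{(\p)})(-n) \cong (R_{(\p)}/\p R_{(\p)})(-k)$.

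For (2) $\Rightarrow$ (1), I would pass to *injective hulls over $R_{(\p)}$. Given the isomorphism in (2), taking $\text{*}E_{R_{(\p)}}$ yields
\[
\text{*}E_{R_{(\p)}}\bigl((R_{(\p)}/\p R_{(\p)})(-n)\bigr) \cong \text{*}E_{R_{(\p)}}\bigl((R_{(\p)}/\p R_{(\p)})(-k)\bigr).
\]
By Proposition \ref{soih}, each side pulls the shift out: $\text{*}E_{R_{(\p)}}(R_{(\p)}/\p R_{(\p)})(-n) \cong \text{*}E_{R_{(\p)}}(R_{(\p)}/\p R_{(\p)})(-k)$. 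Finally, Proposition \ref{*hull localization 1}(1) identifies $\text{*}E_{R_{(\p)}}(R_{(\p)}/\p R_{(\p)})$ with $\text{*}E_R(R/\p)$, and restriction of scalars gives the desired isomorphism of graded $R$-modules. The main technical care will be ensuring that the isomorphisms produced at the $R_{(\p)}$-level descend to graded $R$-module isomorphisms, but since the modules in question already carry canonical $R_{(\p)}$-structures (making the $R$-action factor through the $R_{(\p)}$-action), this is automatic.
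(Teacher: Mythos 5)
Your proposal is correct and follows essentially the same route as the paper: use Proposition \ref{*hull localization 1} to localize and pass between the *injective hull and $R_{(\p)}/\p R_{(\p)}$ for the equivalence of (1) and (2), and invoke Lemma \ref{2.3L1} for the equivalence of (2) and (3). The only cosmetic difference is that in (1) $\Rightarrow$ (2) you apply $\SHom_R(R/\p,-)_{(\p)}$ whereas the paper localizes first and then applies $\ann(\p R_{(\p)},-)$, but these extract the same module.
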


\begin{proof}
$(1) \implies (2)$: Suppose that $\text{*}E_R(R/\p)(-n)\cong \text{*}E_R(R/\p)(-n)$. Since homogeneous localization at $\p$ is exact, Proposition \ref{*hull localization 1} implies $\text{*}E_{R_{(\p)}}(R_{(\p)}/\p R_{(\p)})(-n)$ is isomorphic to $ \text{*}E_{R_{(\p)}}(R_{(\p)}/\p R_{(\p)})(-k)$ as $R_{(\p)}$-modules. Applying $\ann(\p R_{(\p)}, \--)$ to both sides, Proposition \ref{*hull localization 1} implies that 

\[
R_{(\p)}/\p R_{(\p)}(-n)\cong R_{(\p)}/\p R_{(\p)}(-k)
\] 

\

\noindent as $R_{(\p)}$-modules and hence as $R_{(\p)}/\p R_{(\p)}$-modules.

\

$(2)\implies (1)$: Suppose that $R_{(\p)}/\p R_{(\p)}(-n)\cong R_{(\p)}/\p R_{(\p)}(-n)$ as $R_{(\p)}$-modules. Applying $\text{*}E_{R_{(\p)}}(\--)$ to both sides yields that $\text{*}E_{R_{(\p)}}(R_{(\p)}/\p R_{(\p)})(-n)$ is isomorphic to $ \text{*}E_{R_{(\p)}}(R_{(\p)}/\p R_{(\p)})(-k)$ as graded $R$-modules. Hence, Proposition \ref{*hull localization 1} implies the desired isomorphism $\text{*}E_R(R/\p)(-n)\cong \text{*}E_R(R/\p)(-n)$.

\

$(2)\iff (3)$: This is a consequence of Lemma \ref{2.3L1}.\qedhere

\end{proof}

In light of Proposition \ref{2.3P1}, to understand when  $\text{*}E_R(R/\p)(-n)\cong \text{*}E_R(R/\p)(-k)$, we need only understand the structure of $R_{(\p)}/\p R_{(\p)}$. In this setting $R_{(\p)}$ is a *local ring with *maximal ideal $\p R_{(\p)}$.

\begin{definition}\cite[pg. 181]{GW}
A graded ring $R$ is said to be \textit{$H$-simple} if every nonzero homogeneous element of $R$ is invertible.
\end{definition}

For any *local ring  $(R,\m)$, the quotient $R/\m$ is $H$-simple. Indeed, we observe that every nonzero homogeneous element in $R/\m$ is invertible. The following lemma characterizes $H$-simple rings.

\begin{prop}[{\cite[Lemma 1.1.1]{GW}, \cite[Lemma 1.5.7]{BH}}]\label{*local structure}
Let $R$ be a graded ring. The following are equivalent:

\begin{enumerate}
\item $R$ is $H$-simple.
\item $R_0=k$ is a field, and either $R=k$ or $R=k[t,t^{-1}]$, where $t$ is a invertible homogeneous element of $R$, which is transcendental over $k$.
\item Every graded $R$-module is free.
\end{enumerate}
\end{prop}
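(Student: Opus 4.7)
The plan is to prove the three conditions equivalent by a cyclic argument $(1) \Rightarrow (2) \Rightarrow (3) \Rightarrow (1)$. Each step should be elementary once the right structural observation is isolated; I do not expect to need any of the injective-hull machinery from the preceding subsection.

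For $(1) \Rightarrow (2)$, I would first check that $R_0$ is a field: a nonzero $a \in R_0$ is homogeneous, hence invertible by hypothesis, and examining the graded components of the equation $ab=1$ forces $a^{-1}$ to sit in $R_0$. Next, the support $D = \{d \in \Z : R_d \neq 0\}$ is a subgroup of $\Z$, because the product of two nonzero homogeneous elements is nonzero (they are both units), and the inverse of a homogeneous element is homogeneous of the negative degree. Thus $D = n\Z$ for some $n \geq 0$. If $n = 0$ then $R = R_0 = k$; otherwise, picking any nonzero $t \in R_n$, any $x \in R_{mn}$ satisfies $xt^{-m} \in R_0 = k$, so $R_{mn} = k \cdot t^m$. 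Distinct powers of $t$ sit in distinct graded pieces, which forces $k$-linear independence, so $t$ is transcendental over $k$ and $R = k[t,t^{-1}]$.

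For $(2) \Rightarrow (3)$, the case $R = k$ is immediate. Assume $R = k[t,t^{-1}]$ with $\deg t = n$, and let $M$ be a graded $R$-module. I would decompose $M$ along residue classes modulo $n$, writing $M = \bigoplus_{j=0}^{n-1} M^{[j]}$ with $M^{[j]} = \bigoplus_{d \equiv j \bmod n} M_d$, each a graded $R$-submodule. Since $t$ acts invertibly, multiplication by $t^m$ is a $k$-linear isomorphism $M_j \to M_{j+mn}$. Fixing a $k$-basis $\{e_\alpha\}$ of $M_j$, the induced graded map $\bigoplus_\alpha R(-j) \to M^{[j]}$ sending the $\alpha$-th generator to $e_\alpha$ is surjective (every homogeneous element of $M_{j+mn}$ has the form $t^m y$ for some $y \in M_j$) and injective (matching graded components reduces to the $k$-linear independence of $\{e_\alpha\}$ after dividing by a common power of $t$).

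For $(3) \Rightarrow (1)$, I would argue by contradiction: if $r \in R$ is nonzero, homogeneous, and not a unit, then $(r) \neq R$, so $R/(r)$ is a nonzero graded $R$-module, hence free by hypothesis. Any nonzero basis element $b$ satisfies $r \cdot b = 0$ in $R/(r)$ with $r \neq 0$, contradicting linear independence. I expect the main bookkeeping subtlety to lie in $(2) \Rightarrow (3)$: one must phrase freeness of $M^{[j]}$ using shifted copies $R(-j)$, since the chosen basis elements naturally live in degree $j$ rather than degree $0$, and the decomposition into residue classes is essential because the grading on $M$ is by all of $\Z$ while $R$ is concentrated on multiples of $n$.
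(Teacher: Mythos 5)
The paper states this proposition by citing \cite[Lemma 1.1.1]{GW} and \cite[Lemma 1.5.7]{BH} and gives no proof of its own, so there is nothing in the text to compare against; your cyclic argument $(1)\Rightarrow(2)\Rightarrow(3)\Rightarrow(1)$ is correct and is the standard one found in those references. The key points you isolate are exactly right: the support of the grading forms a subgroup of $\Z$ in step $(1)\Rightarrow(2)$; the decomposition of $M$ into residue classes mod $n=\deg t$ and the observation that multiplication by powers of $t$ gives $k$-linear isomorphisms between the graded pieces within one class in step $(2)\Rightarrow(3)$; and the fact that a nonzero quotient $R/(r)$ with $r$ a nonzero homogeneous nonunit cannot be graded-free in step $(3)\Rightarrow(1)$.
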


Proposition \ref{*local structure}, leads to a classification (up to graded isomorphism) of graded $R$-modules.

\begin{lem}\label{2.3L2}
Let $R$ be a graded ring that is H-simple but not a field. As in~Proposition \ref{*local structure}, we set $R=k[t,t^{-1}]$. If $M$ is a graded $R$-module, then 

\[M\cong \bigoplus_{z=0}^{\deg(t)-1} R(-z)^{\oplus \gamma_z},\]

 \noindent where each $\gamma_z$ is some cardinal number.
\end{lem}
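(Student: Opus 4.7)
The plan is to combine Proposition \ref{*local structure} with Lemma \ref{2.3L1}: the former lets us decompose $M$ into shifts of $R$, and the latter tells us which shifts are redundant. Since $R = k[t,t^{-1}]$ is $H$-simple, Proposition \ref{*local structure} gives that $M$ is free as a graded $R$-module, so there is an isomorphism
\[
M \cong \bigoplus_{\alpha \in A} R(-n_\alpha)
\]
in $\text{*}R$-mod for some family of integers $n_\alpha \in \Z$. The goal is then to repackage this direct sum so that each shift lies in $\{0, 1, \ldots, \deg(t)-1\}$.

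The key observation is to identify the homogeneous units of $R$. Writing $d = \deg(t)$, every nonzero homogeneous element of $R$ has the form $ct^m$ with $c \in k^\times$ and $m \in \Z$, which is a unit of degree $dm$. Hence the set of degrees of homogeneous units of $R$ is exactly $d\Z$. By Lemma \ref{2.3L1}, this means $R(-n) \cong R(-k)$ in $\text{*}R$-mod if and only if $k - n \in d\Z$, i.e., $n \equiv k \pmod{d}$.

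Given this, I would partition the index set $A$ according to the residue class of $n_\alpha$ modulo $d$: for each $z \in \{0, 1, \ldots, d-1\}$, let $A_z = \{\alpha \in A \mid n_\alpha \equiv z \pmod{d}\}$ and set $\gamma_z = |A_z|$. For each $\alpha \in A_z$, Lemma \ref{2.3L1} produces a graded isomorphism $R(-n_\alpha) \cong R(-z)$, and assembling these gives
\[
M \cong \bigoplus_{z=0}^{d-1} \bigoplus_{\alpha \in A_z} R(-n_\alpha) \cong \bigoplus_{z=0}^{d-1} R(-z)^{\oplus \gamma_z},
\]
as desired. There is no real obstacle here; the only thing one must be careful about is distinguishing between the (two-sided) case $R = k[t,t^{-1}]$ in the statement (which is $\Z$-graded) and the subcase where $R = k$ is already a field, but the hypothesis ``not a field'' rules out the latter, so $d \geq 1$ and the displayed sum is nonempty.
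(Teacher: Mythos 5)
Your proof is correct and follows essentially the same route as the paper's: invoke Proposition \ref{*local structure} to get freeness, then use Lemma \ref{2.3L1} together with the observation that the degrees of homogeneous units are exactly the multiples of $\deg(t)$ to collapse the shifts into one representative per residue class modulo $\deg(t)$.
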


\begin{proof}
By Proposition \ref{*local structure}, we have that $M$ is a free $R$-module. Therefore, 

\[
M\cong \bigoplus_{z\in \Z} R(-z)^{\oplus\beta_z}.
\]

\

By Lemma \ref{2.3L1}, we have that  $R(-n)\cong R(-z)$ if and only if there is a unit of degree $z-n$ in $R$. Since $R$ is $H$-simple, all homogeneous units of $R$ have degree divisible by $\deg(t)$. From the definition of degree, we have that $\deg(t^m)=m\deg(t)$ for all $m\in \Z$.  Thus, $R(-n)\cong R(-z)$ if and only if $\deg(t)\mid z-n$. Hence,

\[
M\cong \bigoplus_{z=0}^{\deg(t)-1} R(-z)^{\oplus \gamma_z}
\]

\

\noindent with $\gamma_z=\sum_{n\in \Z} \beta_{z+\deg(t)n}$.
\end{proof}

For a $H$-local ring $R$ and $R$-module $M$, the cardinal numbers $\gamma_z$ in Lemma \ref{2.3L2} are unique, as is shown by the following lemma.

\begin{lem}\label{2.3L3}
Let $R$ be a graded ring that is H-simple. Write $R=k[t,t^{-1}]$ as in Proposition \ref{*local structure}. If 

\[
\bigoplus_{z=0}^{\deg(t)-1} R(-z)^{\oplus \gamma_k} ~\cong\bigoplus_{z=0}^{\deg(t)-1} R(-z)^{\oplus \beta_k},
\]

\noindent then $\gamma_z=\beta_z$.
\end{lem}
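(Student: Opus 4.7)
The plan is to reduce the equality of the cardinals $\gamma_z$ and $\beta_z$ to a comparison of $k$-vector space dimensions in each single graded degree. Set $d=\deg(t)$ so that $R=k[t,t^{-1}]$. Because $t$ is transcendental over $k$, the $j$-th graded piece $R_j$ equals $k\cdot t^{j/d}$ when $d\mid j$, and is zero otherwise; in particular, $\dim_k R_j\in\{0,1\}$.

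Next, for each shift $R(-z)$ with $z\in\{0,1,\dots,d-1\}$, the $j$-th graded piece is $R_{j-z}$, which is one-dimensional over $k$ precisely when $j\equiv z\pmod{d}$, and is zero otherwise. For a fixed $j\in\Z$, exactly one value $z\in\{0,\dots,d-1\}$ satisfies $j\equiv z\pmod{d}$; denote it by $z(j)$. Taking the $j$-th graded piece of $\bigoplus_{z=0}^{d-1}R(-z)^{\oplus\gamma_z}$, only the $z=z(j)$ summand contributes, giving a $k$-vector space isomorphic to $k^{\oplus\gamma_{z(j)}}$. The same computation on the right-hand side gives $k^{\oplus\beta_{z(j)}}$.

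Finally, any graded isomorphism of the two modules restricts to a $k$-linear isomorphism in every fixed degree $j$, hence $k^{\oplus\gamma_{z(j)}}\cong k^{\oplus\beta_{z(j)}}$ as $k$-vector spaces. Comparing dimensions, interpreted as cardinal numbers to accommodate infinite values, yields $\gamma_{z(j)}=\beta_{z(j)}$. Letting $j$ run through $\{0,1,\dots,d-1\}$ so that $z(j)$ realizes every value in that set, we conclude $\gamma_z=\beta_z$ for all $z$. The argument is essentially a direct degree-count, so I do not anticipate any genuine obstacle; the only thing to be mindful of is keeping the bookkeeping of cardinals honest when some $\gamma_z$ or $\beta_z$ are infinite, which is handled by the standard fact that free modules over a field are classified by the cardinality of any basis.
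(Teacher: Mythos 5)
Your proof is correct and follows essentially the same route as the paper: both compare the $k$-vector space dimensions of a single graded piece in each residue class modulo $\deg(t)$, observing that only one shift $R(-z)$ contributes to a given degree. The only cosmetic difference is that you phrase the bookkeeping via a residue function $z(j)$ before specializing $j$ to $\{0,\dots,\deg(t)-1\}$, whereas the paper works directly with those degrees.
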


\begin{proof}
Since $R=k[t,t^{-1}]$, then $[R]_i=0$ for $i\in \Z$ such that $\deg(t)$ is not divisible by $i$. Thus, for $n,i\in \Z$ we have $[R(-n)]_i=[R]_{i-n}=0$ when $\deg(t)$ is not divisible by $i-n$. Thus, for each $z$ with $0\leq s \leq \deg(t)-1$, we have 

\[
\left[ \bigoplus_{z=0}^{\deg(t)-1} R(-z)^{\oplus \gamma_k} \right]_s= \bigoplus_{z=0}^{\deg(t)-1} \left[R(-z)^{\oplus \gamma_k}\right]_s =[R(-s)^{\oplus \gamma_s}]_s=k^{\oplus \gamma_s}.
\]

\

Similarly,

\[
\left[ \bigoplus_{z=0}^{\deg(t)-1} R(-z)^{\oplus \beta_k} \right]_s= \bigoplus_{z=0}^{\deg(t)-1} \left[R(-z)^{\oplus \beta_k}\right]_s =[R(-s)^{\oplus \beta_s}]_s=k^{\oplus \beta_s}.
\]

\

Since 

\[
\bigoplus_{z=0}^{\deg(t)-1} R(-z)^{\oplus \gamma_k} ~\cong\bigoplus_{z=0}^{\deg(t)-1} R(-z)^{\oplus \beta_k},
\]

\

\noindent as graded $R$-modules, we have $k^{\oplus \gamma_s}\cong k^{\oplus \beta_s}$ as $[R]_0=k$-modules. In particular, we have $\gamma_s=\beta_s$, as desired.
\end{proof}

We are now in a position to prove a ``uniqueness analogue" to Theorem \ref{*structure}. First, we introduce some notation.

\begin{notation}
Let $R$ be a graded ring and $\p$ a graded prime ideal of $R$ such that $R_{(\p)}/\p R_{(\p)}$ is not a field. By Proposition \ref{*local structure}, there exist a field $k_\p$ and an invertible homogeneous element $t_\p$ of $R_{(\p)}/\p R_{(\p)}$, such that $R_{(\p)}/\p R_{(\p)}=k_\p[t_\p,t_\p^{-1}]$.
\end{notation}

\begin{thm}\label{2.3T1}
Let $R$ be a Noetherian, $\Z$-graded ring and $M$ a *injective $R$-module. Then there is a graded isomorphism of $R$-modules

\[
M\cong \bigoplus_{\substack{\mathfrak{p}\in \mathrm{\text{*}Spec}(R)}}\bigoplus_{k=0}^{\deg(t_\p)-1} \text{*}E_R(R/\mathfrak{p}(-k))^{\oplus\text{*}\eta(k,\p,M)}
\]

\noindent where each $\text{*}\mu(k,\p,M)$ is a cardinal number. Moreover, the $\text{*}\mu(k,\p,M)$ are unique in the following sense: if there is another decomposition 

\[
M\cong \bigoplus_{\substack{\mathfrak{p}\in \mathrm{\text{*}Spec}(R)}}\bigoplus_{k=0}^{\deg(t_\p)-1} \text{*}E_R(R/\mathfrak{p}(-k))^{\oplus\text{*}\gamma(k,\p,M)},
\]

\

\noindent then $\text{*}\mu(k,\p,M)=\text{*}\gamma(k,\p,M)$ for all $k$ and all $\p\in \text{*}\Spec(R)$.
\end{thm}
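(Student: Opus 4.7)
The plan is to deduce existence from Theorem \ref{*structure} by collapsing shifts modulo $\deg(t_\p)$, and then to establish uniqueness by applying $\SHom_R(R/\p,-)_{(\p)}$ to both decompositions and invoking Lemma \ref{2.3L3}.

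For existence, Theorem \ref{*structure} gives a graded isomorphism $M \cong \bigoplus_{\p \in \text{*}\Spec(R),\, k \in \Z} \text{*}E_R(R/\p)(-k)^{\oplus \eta(\p,k,M)}$. I would then collect, for fixed $\p$, those summands which are graded isomorphic. By Proposition \ref{2.3P1}, $\text{*}E_R(R/\p)(-n) \cong \text{*}E_R(R/\p)(-k)$ precisely when $R_{(\p)}/\p R_{(\p)}$ contains a unit of degree $k-n$, and since $R_{(\p)}/\p R_{(\p)}$ is $H$-simple (the quotient of a *local ring by its *maximal ideal), Proposition \ref{*local structure} says this happens exactly when $\deg(t_\p) \mid k-n$ in the non-field case (and for all $n,k$ in the field case, where we interpret the sum as having only the single term $k=0$). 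Combining summands within each residue class and setting $\text{*}\eta(k,\p,M) := \sum_{n \equiv k \pmod{\deg(t_\p)}} \eta(\p,n,M)$ yields the asserted decomposition.

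For uniqueness, fix $\p \in \text{*}\Spec(R)$ and apply the functor $\SHom_R(R/\p,-)_{(\p)}$ to the two given decompositions. Since $R$ is Noetherian, $R/\p$ is finitely generated, so $\SHom_R(R/\p,-)$ commutes with arbitrary direct sums; it also clearly commutes with shifts, as does homogeneous localization. By Proposition \ref{*hull localization 1}(3), the image of $\text{*}E_R(R/\q)(-k)$ under $\SHom_R(R/\p,-)_{(\p)}$ vanishes for $\q \neq \p$ and equals $(R_{(\p)}/\p R_{(\p)})(-k)$ for $\q = \p$. Therefore both decompositions of $M$ produce graded isomorphisms
\[
\SHom_R(R/\p,M)_{(\p)} \;\cong\; \bigoplus_{k=0}^{\deg(t_\p)-1} \bigl(R_{(\p)}/\p R_{(\p)}\bigr)(-k)^{\oplus \text{*}\eta(k,\p,M)} \;\cong\; \bigoplus_{k=0}^{\deg(t_\p)-1} \bigl(R_{(\p)}/\p R_{(\p)}\bigr)(-k)^{\oplus \text{*}\gamma(k,\p,M)}
\]
of graded $R_{(\p)}/\p R_{(\p)}$-modules. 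Since $R_{(\p)}/\p R_{(\p)}$ is $H$-simple, Lemma \ref{2.3L3} gives $\text{*}\eta(k,\p,M) = \text{*}\gamma(k,\p,M)$ for every $0 \leq k \leq \deg(t_\p)-1$. The field case reduces to comparing ranks of vector spaces over $R_{(\p)}/\p R_{(\p)}$.

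The main obstacle is not conceptual but organizational: ensuring $\SHom_R(R/\p,-)$ commutes with the (possibly infinite) direct sums appearing in the decomposition, and handling the dichotomy between $R_{(\p)}/\p R_{(\p)}$ being a field or a graded Laurent polynomial ring $k_\p[t_\p,t_\p^{-1}]$ uniformly within the notation $\deg(t_\p)$. Beyond that, the argument is a straightforward assembly of the machinery already developed in this section.
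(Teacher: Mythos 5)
Your proposal is correct and follows essentially the same route as the paper: existence from Theorem \ref{*structure} collapsed modulo $\deg(t_\p)$ via Proposition \ref{2.3P1}/Lemma \ref{2.3L2}, and uniqueness by applying $\SHom_R(R/\p,-)_{(\p)}$ and reading off multiplicities through Proposition \ref{*hull localization 1}(3) and Lemma \ref{2.3L3}. Your explicit remarks about $\SHom_R(R/\p,-)$ commuting with arbitrary direct sums (since $R/\p$ is finitely generated over Noetherian $R$) and about the field-versus-Laurent dichotomy are implicit in the paper's argument and do no harm.
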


\begin{proof}
By Proposition \ref{*structure}, we may write 

\[
M\cong \bigoplus_{\substack{\mathfrak{p}\in \mathrm{\text{*}Spec}(R) \\ k\in \Z}}\text{*}E_R(R/\mathfrak{p}(-k))^{\oplus\text{*}\eta(k,\p,M)}.
\]

\

\noindent Let $\p \in \text{*}\Spec(R)$. Then by Proposition \ref{2.3P1} and Lemma \ref{2.3L2}, we have

\[
M\cong \bigoplus_{\p\in \text{*}\Spec(R)} \bigoplus_{k=0}^{\deg(t_\p)-1}\text{*}E_R(R/\mathfrak{\p}(-k))^{\oplus\text{*}\eta(k,\p,M)},
\]

\

\noindent with $\text{*}\mu(k,\p,M)=\sum_n \eta(k+n\deg(t_\p),\p,M)$. Suppose there is another decomposition of $M$

\[
M\cong \bigoplus_{\substack{\mathfrak{p}\in \mathrm{\text{*}Spec}(R)}}\bigoplus_{k=0}^{\deg(t_\p)-1} \text{*}E_R(R/\mathfrak{p}(-k))^{\oplus\text{*}\gamma(k,\p,M)}.
\]

\

\noindent By Lemma \ref{*hull localization 1}, we have that 

\[
\begin{aligned}
\SHom_{R_{(\p)}}((R/\p)_{(\p)},M_{(\p)})&\cong \bigoplus_{k=0}^{\deg(t_\p)-1}\left(R_{(\p)}/\p R_{(\p)}(-k)\right)^{\oplus\text{*}\eta(k,\p,M)}\\
&\cong \bigoplus_{k=0}^{\deg(t_\p)-1} \left(R_{(\p)}/\p R_{(\p)}(-k)\right)^{\oplus\text{*}\gamma(k,\p,M)}.
\end{aligned}
\]

\

\noindent Therefore, by Lemma \ref{2.3L3}, we have $\text{*}\eta(k,\p,M)=\text{*}\gamma(k,\p,M)$ for all $k\in \Z$. \qedhere \end{proof}

We now apply Theorem \ref{2.3T1} to the study of graded Bass numbers.

\begin{cor}\label{graded bass number dimension}
Let $R$ be a Noetherian, $\Z$-graded ring and $M$ a graded $R$-module. Let $\p \in \text{*}\Spec(R)$. By Lemma \ref{2.3L2}, we may write 

\[
\uExt_{R_{(\p)}}^i((R/\p)_{(\p)}, M_{(\p)}))\cong \bigoplus_{z=0}^{\deg(t)-1} (R/\p)_{(\p)}(-z)^{\oplus \gamma_{z,i}},
\] 

\

\noindent where the cardinal numbers $\gamma_{z,i}$ are unique. Then for $0\leq z \leq \deg(t)-1$ and a minimal *injective resolution of $M$, $E^\bullet_M$ , we have 

\[
\text{*}\mu(z,\p,E^i_M)=\gamma_{k,i}.
\]

\

\noindent In particular, the numbers $\text{*}\mu(z,\p, E^i_M)$ depend only on $M$.
\end{cor}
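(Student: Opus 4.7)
The plan is to compute $\uExt_{R_{(\p)}}^i((R/\p)_{(\p)},M_{(\p)})$ directly from the minimal *injective resolution $E^\bullet_M$ and then invoke the uniqueness statement of Lemma \ref{2.3L3} to identify the multiplicities.

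First, I would apply the refined structure theorem Theorem \ref{2.3T1} to each term of the minimal resolution, obtaining a decomposition
\[
E^i_M\cong \bigoplus_{\q\in\text{*}\Spec(R)}\bigoplus_{k=0}^{\deg(t_\q)-1}\text{*}E_R(R/\q(-k))^{\oplus\text{*}\mu(k,\q,E^i_M)}.
\]
Next, apply $\SHom_R(R/\p,\--)$ and homogeneous localization at $\p$. Using Proposition \ref{soih} to pull shifts out of the *injective hull and the computation in Proposition \ref{*hull localization 1}(3), the cross terms at $\q\neq\p$ vanish and each summand at $\p$ contributes $R_{(\p)}/\p R_{(\p)}(-k)$. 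Thus
\[
\SHom_R(R/\p,E^i_M)_{(\p)}\cong \bigoplus_{k=0}^{\deg(t_\p)-1}\bigl(R_{(\p)}/\p R_{(\p)}(-k)\bigr)^{\oplus\text{*}\mu(k,\p,E^i_M)}.
\]

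The key step, and the main obstacle, is to show that the differentials in the complex $\SHom_R(R/\p,E^\bullet_M)_{(\p)}$ all vanish, so that cohomology is computed termwise. This is the graded analog of the familiar fact that Bass numbers can be read off directly from a minimal injective resolution. I would argue that, by minimality of $E^\bullet_M$ as a *injective resolution (equivalently, the inclusions $\mathrm{im}(d^{i-1})\hookrightarrow E^i_M$ are *essential), any homogeneous map from $R/\p$ into an indecomposable summand $\text{*}E_R(R/\q(-k))$ of $E^i_M$ lands in the socle-type part where $\q=\p$; composing with $d^i$ would then force the image into $\mathrm{im}(d^i)$ inside $E^{i+1}_M$, but minimality together with *essentiality precludes nonzero homogeneous socle elements from mapping out. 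After localizing at $\p$ this shows the differentials on $\SHom_R(R/\p,E^\bullet_M)_{(\p)}$ vanish, so
\[
\uExt_R^i(R/\p,M)_{(\p)}\cong \SHom_R(R/\p,E^i_M)_{(\p)}.
\]

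Finally, I would use the natural graded isomorphism
\[
\SHom_R(R/\p,E^i_M)_{(\p)}\cong \SHom_{R_{(\p)}}\bigl((R/\p)_{(\p)},(E^i_M)_{(\p)}\bigr)\cong \uExt_{R_{(\p)}}^i((R/\p)_{(\p)},M_{(\p)}),
\]
where the last isomorphism uses that $(E^\bullet_M)_{(\p)}$ is a *injective resolution of $M_{(\p)}$ by Proposition \ref{*minimal localizes}. Combining this with the decomposition above yields
\[
\uExt_{R_{(\p)}}^i((R/\p)_{(\p)},M_{(\p)})\cong \bigoplus_{k=0}^{\deg(t_\p)-1}\bigl(R_{(\p)}/\p R_{(\p)}(-k)\bigr)^{\oplus\text{*}\mu(k,\p,E^i_M)}.
\]
Since $R_{(\p)}/\p R_{(\p)}$ is $H$-simple, Lemma \ref{2.3L3} gives uniqueness of such a decomposition, forcing $\gamma_{z,i}=\text{*}\mu(z,\p,E^i_M)$ for each $0\le z\le \deg(t)-1$. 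As the left side depends only on $M$, so does $\text{*}\mu(z,\p,E^i_M)$.
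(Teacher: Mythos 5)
Your proposal follows essentially the same route as the paper: decompose each $E^i_M$ via Theorem~\ref{2.3T1}, localize, show the differentials of $\SHom_{R_{(\p)}}((R/\p)_{(\p)},(E^\bullet_M)_{(\p)})$ vanish, and apply the uniqueness in Lemma~\ref{2.3L3}. The one spot where your sketch is looser than the paper's is the vanishing of differentials: the precise chain is that after localizing, $\im(\del^i_{(\p)})\subseteq E^i_{(\p)}$ is *essential (minimality preserved by Lemma~\ref{*essential localizes}), hence $\Soc(E^i_{(\p)})\subseteq\im(\del^i_{(\p)})=\ker(\del^{i+1}_{(\p)})$ by exactness; you gesture at this but should make the appeal to exactness of the localized resolution explicit.
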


\begin{proof}
Suppose that $(E^\bullet,\del)$ is a minimal *injective resolution of $M$. Let $\p$ be a graded prime ideal of $R$. After homogeneous localization at $\p$, Proposition \ref{*minimal localizes} implies that $(E^\bullet_{(\p)})$ is a minimal *injective resolution for $M_{(\p)}$ over $R_{(\p)}$. By applying Theorem \ref{2.3T1}, for each $i$, we may write

\[
E^i=\bigoplus_{\substack{\q\in \text{*}\Spec(R)}}\bigoplus_{z=0}^{\deg(t_\q)-1}\text{*}E_R(R/\q)(-z)^{\oplus \mu(z,\mathfrak{q},E^i)}
\]

\

\noindent Then by Lemma \ref{*hull localization 1}, we have

\[
\begin{aligned}
\underline{\Hom}_{R_{(\p)}}\left(~(R/\p)_{(\p)},~\left(E^i\right)_{(\p)}\right)&\cong \ann_{R_{(\p)}}\left(\p R_{(\p)}, \left(E^i\right)_{(\p)}\right)\\&=\Soc_{R_{(\p)}/\p R_{(\p)}}((E^i)_{(\p)})\\ &\cong\bigoplus_{z=0}^{\deg(z_\p)-1}R_{(\p)}/\p R_{(\p)}(-z)^{\mu(z,\mathfrak{p},E^i)}. 
\end{aligned}
\]

\noindent  We now show that the differential of the complex $\underline{\Hom}_{R_{(\p)}}\left(~(R/\p)_{(\p)},~\left(E^\bullet\right)_{(\p)}\right)$ is 0. For each $i\geq 1$, we have a commutative diagram

\begin{center}
\[
\begin{tikzcd}
\underline{\Hom}_{R_{(\p)}}(R_{(\p)}/\p R_{(\p)},~E^{i-1}_{(\p)})\arrow[d]\arrow[r, "\overline{\del^i_{(\p)}}"] &\underline{\Hom}_{R_{(\p)}}(R_{(\p)}/\p R_{(\p)},~E^i_{(\p)})\arrow[d] \\
\Soc_{R_{(\p)}/\p R_{(\p)}}((E^{i-1})_{(\p)}) \arrow[r, "\del^i_{(\p)}"] & \Soc_{R_{(\p)}/\p R_{(\p)}}((E^i)_{(\p)}) 
\end{tikzcd}
\]
\end{center}
\

\noindent where the vertical arrows are isomorphisms sending $\phi$ to $\phi(1)$, and the horizontal maps on the second row are restriction maps of the $\del_i\otimes 1_{R_{(\p)}}$.

\

Note that $E^i=E_R\left( \frac{E^{i-1}}{\im(\del^{i-1})}\right)$. For each $i$, we then have a commutative diagram

\begin{center}
\[
\begin{tikzcd}
& \frac{E^{i-1}}{\im(\del^{i-1})}\arrow[dr, "g_i"]& \\
E^{i-1}\arrow[ur,"\pi_i"]\arrow[rr, "\del^i"] && E^i,
\end{tikzcd}
\]
\end{center}

\

\noindent where $\pi_i$ is the canonical surjection map and $g_i$ is the *essential extension. In the case $i=0$, $E^{-1}=M$. Thus, since homogeneous localization is exact

\begin{center}
\[
\begin{tikzcd}
& \frac{E^{i-1}_{(\p)}}{\im(\del^{i-1}_{(\p)})}\arrow[dr, "g_i"]& \\
E^{i-1}_{(\p)}\arrow[ur,"\pi_i"]\arrow[rr, "\del^i_{(\p)}"] && E^i_{(\p)},
\end{tikzcd}
\]
\end{center}

\noindent commutes, where $\pi_i$ is the canonical surjection, and $g_i$ an *essential injection by Lemma \ref{*essential localizes}. Moreover, $\Soc\left( \frac{E^{i-1}_{(\p)}}{\im(\del^{i-1}_{(\p)})}\right)=\Soc(E^i_{(\p)})$. As $\pi_i$ is surjective, we then have that $\Soc(E^i_{(\p)})\subseteq \im(\del^i_{(\p)})$. Since $E^\bullet_{(\p)}$ is exact, this that $\Soc(E^i_{(\p)})$ is contained in $ \ker(\del^{i+1}_{(\p)})$ for all $i$. Therefore, the maps in 

\begin{center}
\[
\begin{tikzcd}
\Soc_{R_{(\p)}/\p R_{(\p)}}((E^0)_{(\p)}) \arrow[r, "\del^1_{(\p)}"] & \Soc_{R_{(\p)}/\p R_{(\p)}}((E^1)_{(\p)})\arrow[r, "\del^2_{(\p)}"]& \Soc_{R_{(\p)}/\p R_{(\p)}}((E^2)_{(\p)})\arrow[r] & \cdots
\end{tikzcd}
\]
\end{center}

\

\noindent are zero. Therefore, the complex $\underline{\Hom}_{R_{(\p)}}\left(~(R/\p)_{(\p)},~\left(E^\bullet\right)_{(\p)}\right)$ has a differential of 0. Thus,

\[
\begin{aligned}
\underline{\Ext}_R^i(R/\p, M)_{*\p}&\cong \underline{\Ext}_{R_{(\p)}}^i((R/\p)_{(\p)}, M_{(\p)})\\ &\cong \underline{\Hom}_{R_{(\p)}}\left(~(R/\p)_{(\p)},~\left(E^i\right)_{(\p)}\right)\\&\cong \bigoplus_{z=0}^{\deg(t_\p)-1} (R/\p)_{(\p)}(-z)^{\oplus \mu(z,\p,E^i)}.
\end{aligned}
\]

\

\noindent Therefore, by Lemma \ref{2.3L3}, we have $\mu(z,\p,E^i)=\gamma_{k,i}$, as desired.\qedhere

\end{proof}

In the spirit of Corollary \ref{graded bass number dimension}, we make the following definition.

\begin{definition}
Let $R$ be a graded Noetherian ring, $M$ a graded $R$-module, and the complex $E^\bullet_M$ a minimal *injective resolution of $M$.  For $\p\in \text{*}\Spec(R)$ and integer $k$ such that $0\leq k \leq \deg(t_\p)-1$, we set 

\[
\text{*}\mu(i,k,\p,M):=\text{*}\mu(k,\p, E_M^i).
\]

\

\noindent We call $\text{*}\mu(i,k,\p,M)$ the $(i,k)$-th graded Bass number with respect to $M$ and note that it is uniquely determined by $M$ by Corollary \ref{graded bass number dimension}.
\end{definition}

\noindent The proof of the following corollary is contained in the proof of Theorem \ref{graded bass number dimension}; we copy it down here for convinence.

\begin{cor}\label{foward dir}
Let $R$ be a Noetherian graded ring and $M$ a graded $R$-module. If the complex $(E^\bullet,\del)$ is a minimal *injective resolution of $M$, then the differential of the complex $\underline{\Hom}_{R_{(\p)}}\left(~(R/\p)_{(\p)},~\left(E^\bullet\right)_{(\p)}\right)$ is 0 for all $\p\in \text{*}\Spec(R)$. 
\end{cor}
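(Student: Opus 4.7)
The plan is to extract the argument already embedded in the proof of Corollary \ref{graded bass number dimension}. Setting $k_\p := R_{(\p)}/\p R_{(\p)}$, the natural isomorphism $\underline{\Hom}_{R_{(\p)}}((R/\p)_{(\p)}, E^i_{(\p)}) \xrightarrow{\sim} \Soc_{k_\p}(E^i_{(\p)})$ sending $\phi \mapsto \phi(1)$ identifies the differential of the Hom complex with the restriction of $\del^{i+1}_{(\p)}$ to the socle. Hence it suffices to prove $\Soc_{k_\p}(E^i_{(\p)}) \subseteq \ker(\del^{i+1}_{(\p)})$ for every $i \geq 0$.

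Next I would invoke Proposition \ref{*minimal localizes} to conclude that $E^\bullet_{(\p)}$ is a minimal *injective resolution of $M_{(\p)}$ over $R_{(\p)}$. Minimality then supplies, for each $i \geq 0$, the canonical factorization $E^{i-1}_{(\p)} \twoheadrightarrow E^{i-1}_{(\p)}/\im(\del^{i-1}_{(\p)}) \hookrightarrow E^i_{(\p)}$ whose second arrow $g_i$ is a *essential extension (by Lemma \ref{*essential localizes}) and whose composition is $\del^i_{(\p)}$; here I take $E^{-1} = M$ and $\del^{-1} = 0$, so that $g_0 \colon M_{(\p)} \hookrightarrow E^0_{(\p)}$ is the structure map of the resolution. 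Consequently $\im(g_i) = \im(\del^i_{(\p)})$ inside $E^i_{(\p)}$.

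The central lemma to isolate is: \emph{if $A \hookrightarrow B$ is a *essential extension of graded $R_{(\p)}$-modules, then $\Soc_{k_\p}(B) \subseteq A$.} Given a nonzero homogeneous $s \in \Soc_{k_\p}(B)$, *essentiality produces a nonzero homogeneous element $r \cdot s \in R_{(\p)} s \cap A$ with $r \in R_{(\p)}$ homogeneous; since $\p R_{(\p)}$ annihilates $s$, the class $\bar r \in k_\p$ is nonzero and hence a unit (as $k_\p$ is $H$-simple by Proposition \ref{*local structure}). Lifting the inverse to $u \in R_{(\p)}$ with $ur \equiv 1 \pmod{\p R_{(\p)}}$ gives $s = (ur)s = u(rs) \in A$. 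Applied to each $g_i$, this forces $\Soc_{k_\p}(E^i_{(\p)}) \subseteq \im(\del^i_{(\p)})$ for every $i \geq 0$. Exactness of $E^\bullet$ in positive degrees, together with the identification $M = \ker \del^1$, is preserved by homogeneous localization, so $\im(\del^i_{(\p)}) \subseteq \ker(\del^{i+1}_{(\p)})$ for every $i \geq 0$; composing the two inclusions yields the desired containment. The only delicate step is the grading bookkeeping in the *essential/socle lemma, which is handled by selecting homogeneous witnesses to *essentiality and invoking $H$-simplicity of $k_\p$.
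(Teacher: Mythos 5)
Your proof is correct and follows the same route as the paper's (which appears inside the proof of Corollary~\ref{graded bass number dimension}): localize homogeneously, invoke Proposition~\ref{*minimal localizes}, factor each $\del^i_{(\p)}$ through the cokernel of $\del^{i-1}_{(\p)}$, and use *essentiality of $g_i$ to conclude $\Soc_{k_\p}(E^i_{(\p)})\subseteq \im(\del^i_{(\p)})\subseteq\ker(\del^{i+1}_{(\p)})$. The one value you add is making explicit, with a clean $H$-simplicity argument, the lemma that a *essential extension $A\hookrightarrow B$ forces $\Soc_{k_\p}(B)\subseteq A$, a step the paper asserts as the equality $\Soc\bigl(E^{i-1}_{(\p)}/\im(\del^{i-1}_{(\p)})\bigr)=\Soc(E^i_{(\p)})$ without justification.
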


\noindent In the setting of Corollary \ref{foward dir} if we further assume that all the *Bass numbers of the $R$-module $M$ are finite, then the converse holds.

\begin{cor}\label{classification of minimal}
Let $R$ be a Noetherian graded ring and $M$ a graded $R$-module. Assume that all the *Bass numbers of $M$ are finite. Then $(E^\bullet,\del)$ is a minimal *injective resolution of $M$ if and only if the complex $\underline{\Hom}_{R_{(\p)}}\left(~(R/\p)_{(\p)},~\left(E^\bullet\right)_{(\p)}\right)$ has a differential of 0 for all $\p\in \text{*}\Spec(R)$. 
\end{cor}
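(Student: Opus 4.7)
The forward implication is Corollary \ref{foward dir}, so the task is to prove the converse under the finiteness hypothesis. My plan is to compare the given *injective resolution $E^\bullet$ with a minimal *injective resolution $\tilde{E}^\bullet$ of $M$, realize $\tilde{E}^\bullet$ as a direct summand of $E^\bullet$, and use multiplicity counting via Theorem \ref{2.3T1} and Lemma \ref{2.3L3} to force the complementary summand to vanish.

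First I would establish a decomposition $E^\bullet \cong \tilde{E}^\bullet \oplus F^\bullet$, where $F^\bullet$ is a split-exact complex of *injectives. A standard comparison argument in *$R$-mod produces a graded chain map $\phi\colon \tilde{E}^\bullet \to E^\bullet$ lifting $\mathrm{id}_M$. In degree zero, $\ker(\phi^0)$ has trivial intersection with $M$ (since $M \hookrightarrow E^0$ is injective), so *essentiality of $M \subseteq \tilde{E}^0$ forces $\phi^0$ to be injective; because $\tilde{E}^0$ is *injective, $\phi^0$ splits off. Iterating this argument on the induced inclusions $\coker(\tilde{\del}^{i-1}) \hookrightarrow \coker(\del^{i-1})$ produces split injections $\tilde{E}^i \hookrightarrow E^i$ at every level. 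The resulting quotient $F^\bullet$ is an exact complex of *injectives, hence split exact.

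By the uniqueness in Theorem \ref{2.3T1}, multiplicities are additive under direct sum:
\[
\text{*}\eta(k, \p, E^i) \;=\; \text{*}\eta(k, \p, \tilde{E}^i) \;+\; \text{*}\eta(k, \p, F^i)
\]
for every $\p \in \text{*}\Spec(R)$ and every $0 \leq k < \deg(t_\p)$. On the other hand, applying $\SHom_{R_{(\p)}}((R/\p)_{(\p)}, (-)_{(\p)})$ to $E^i$ and invoking Proposition \ref{*hull localization 1} gives
\[
\SHom_{R_{(\p)}}\bigl((R/\p)_{(\p)}, E^i_{(\p)}\bigr) \;\cong\; \bigoplus_{k=0}^{\deg(t_\p)-1} \bigl(R_{(\p)}/\p R_{(\p)}\bigr)(-k)^{\text{*}\eta(k, \p, E^i)}.
\]
The vanishing of the differentials (by hypothesis) identifies this module with $\uExt^i_{R_{(\p)}}((R/\p)_{(\p)}, M_{(\p)})$, which by Corollary \ref{graded bass number dimension} applied to $\tilde{E}^\bullet$ decomposes as $\bigoplus_k (R_{(\p)}/\p R_{(\p)})(-k)^{\text{*}\eta(k, \p, \tilde{E}^i)}$. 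Lemma \ref{2.3L3} now forces $\text{*}\eta(k, \p, E^i) = \text{*}\eta(k, \p, \tilde{E}^i)$ for every $k$ and every $\p$.

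Because all *Bass numbers $\text{*}\eta(k, \p, \tilde{E}^i)$ are finite by hypothesis, subtracting in the additivity relation yields $\text{*}\eta(k, \p, F^i) = 0$ for all $k$ and $\p$. Since $F^i$ is *injective, Theorem \ref{2.3T1} then forces $F^i = 0$, so $E^\bullet \cong \tilde{E}^\bullet$ is a minimal *injective resolution. The principal obstacle is the splitting step in the second paragraph---morally a graded Matlis-style assertion that a minimal *injective resolution is a direct summand of any *injective resolution---which requires iterating the essential-injection/splitting dance carefully on cokernels. Everything afterward is bookkeeping with multiplicities, and the finiteness of *Bass numbers enters precisely to license the cardinal subtraction.
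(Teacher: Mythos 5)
Your proof is correct and follows essentially the same route as the paper's: compare the given resolution to a minimal one via a split injection, compute the multiplicities of $E^\bullet$ using the vanishing hypothesis, and invoke finiteness to cancel. The only difference is that the paper outsources the splitting step (that a minimal *injective resolution embeds as a direct summand of any *injective resolution) to a graded analogue of Exercise 11.1.11 in its reference, whereas you spell out the essential-extension/splitting argument explicitly.
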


\begin{proof}
It suffices to prove the reverse direction. Let $(E^\bullet, \del)$ be a *injective resolution of $M$ such that the differential of the complex $\underline{\Hom}_{R_{(\p)}}\left(~(R/\p)_{(\p)},~\left(E^\bullet\right)_{(\p)}\right)$ is 0 for all $\p\in \text{*}\Spec(R)$. By Theorem \ref{*structure} and Theorem \ref{2.3L2}, we may write

\[
E^i \cong \bigoplus_{\substack{\mathfrak{p}\in \mathrm{\text{*}Spec}(R)}}\bigoplus_{z=0}^{\deg(t_\p)-1}\text{*}E_R(R/\mathfrak{p})(-z)^{\text{*}\eta(i,\p,z, E^i)}.
\]

\

\noindent By Lemma  \ref{*hull localization 1}, for every fixed $\p \in \text{*}\Spec(R)$, we have

\[
\begin{aligned}
\underline{\Hom}_{R_{(\p)}}\left(~(R/\p)_{(\p)},~\left(E^i\right)_{(\p)}\right)&\cong \ann_{R_{(\p)}}(\p R_{(\p)}, \left(E^i\right)_{(\p)})\\ &=\SSoc_{R_{(\p)}}\left(\left(E^i\right)_{(\p)}\right)\\
&=\bigoplus_{z\in \Z}(R/\p)_{(\p)}(-z)^{\eta(i,\p,z, E^i)}.
\end{aligned}
\]

\noindent As the differential of the complex $\underline{\Hom}_{R_{(\p)}}\left(~(R/\p)_{(\p)},~\left(E^\bullet\right)_{(\p)}\right)$ is 0 for all graded primes $\p$, and $(E^\bullet)_{(\p)}$ is a *injective resolution for $M_{(\p)}$ over $R_{(\p)}$, we have that

\[
\underline{\Ext}_{R_{(\p)}}^i((R/\p)_{(\p)},M_{(\p)})\cong \bigoplus_{z=0}^{\deg(t_\p)-1} k(-z)^{\oplus \text{*}\eta(\p,z, E^i)}.
\]

\

\noindent So, by Theorem \ref{graded bass number dimension},   

\[
\text{*}\mu(i,\p,z,M)=\text{*}\mu(\p,z, E^i)
\]

\

\noindent  for all $\p \in \text{*}\Spec(R)$. Now let $I^\bullet _M$ be a minimal *injective resolution of $M$. Applying the graded analogue of Exercise 11.1.11 in \cite{MR}, yields an injection of graded complexes $f:I^\bullet_M \to E^\bullet_M$. In particular each $I^i_M$ is a direct summand of *$E^i_M$. Since the *Bass numbers of $M$ are finite and 
$\text{*}\mu(i,\p,z,M)=\text{*}\mu(\p,z, E^i)$, we have that the map $f$ is an isomorphism, so $E^\bullet$ is minimal, as desired.\qedhere

\end{proof}

\

\end{subsection}
\end{section}

%%%%%%%%%%%%%%%%%%%%%%%%%%%%%%%%%%%%%%

\begin{section}{Veronese Submodules}\label{sec3}
\begin{subsection}{The Veronese Functor and its Properties}
In this short subsection we let $R$ be a $\Z$-graded ring and $M$ a graded $R$-module. For $n\in \N$, we define $R^{(n)}$ to be the graded ring with grading 

\[
(R^{(n)})_i=\begin{cases}
R_i & \text{$n$ divides $i$} \\
0 & \text{else}
\end{cases}
\]

\

\noindent We note that $R$ is a graded $R^{(n)}$-module via the usual multiplication. More generally, given any graded $R$-module $M$, we define $M^{(n)}$ to be the  *$R^{(n)}$-module with grading

\[
(M^{(n)})_i=\begin{cases}
M_i & \text{$n$ divides $i$}\\
0 & \text{else}
\end{cases},
\]

\

\noindent where the graded $R^{(n)}$-module structure on $M^{(n)}$ is inherited by the graded $R$-module structure on $M$. Then for each $n\in \N$, we have a functor

\[\
(\--)^{(n)}: \text{*}R-\text{mod}\to \text{*}R^{(n)}-\text{mod}
\]

\

\noindent that on objects takes a graded $R$-module $M$ to the *$R^{(n)}$-module $M^{(n)}$, and takes a *$R$-morphism $f:M\to N$ to the *$R^{(n)}$-morphism $f|_{M^{(n)}}:M^{(n)}\to N^{(n)}$. We call the functor $(\--)^{(n)}$ the \textit{$n$-th Veronese functor}. We collect a few basic facts about the Veronese functor.

\begin{prop}\label{*veronese exact}
Let *$R$ be a $\Z$-graded ring and $n\in \Z$. The $n$-th Veronese functor is exact. 
\end{prop}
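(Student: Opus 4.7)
The plan is to reduce exactness of $(\--)^{(n)}$ to a degree-wise verification. Recall that a sequence of graded $R^{(n)}$-modules with graded maps is exact in $*R^{(n)}$-mod if and only if its restriction to each graded component is an exact sequence of abelian groups, since kernels and images of graded morphisms are computed degree by degree.

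First I would take an arbitrary short exact sequence $0 \to A \to B \to C \to 0$ in $*R$-mod and apply $(\--)^{(n)}$ to obtain the candidate sequence $0 \to A^{(n)} \to B^{(n)} \to C^{(n)} \to 0$ in $*R^{(n)}$-mod. Next I would check exactness in each degree $i \in \Z$ separately. When $n \mid i$, the definition of the Veronese gives $(M^{(n)})_i = M_i$ for every graded module $M$, so the degree-$i$ piece of the candidate sequence coincides with $0 \to A_i \to B_i \to C_i \to 0$, which is exact by hypothesis. When $n \nmid i$, all three graded components are zero, so the sequence is trivially exact in that degree.

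There is no genuine obstacle here: once one recognizes that exactness in $*R^{(n)}$-mod is testable component by component, the result falls out of the piecewise behavior of $(\--)^{(n)}$, which on each graded piece is either the identity or the zero functor. The only small bookkeeping item is verifying that the maps in the Veronese sequence really are the degree-by-degree restrictions of the original ones, but this is exactly what the definition of $(\--)^{(n)}$ on morphisms encodes.
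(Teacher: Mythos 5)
Your proof is correct and rests on the same underlying observation as the paper's — that graded maps respect degree, so the Veronese just selects a subset of graded components. The paper argues by an explicit element chase (checking injectivity, surjectivity, and exactness in the middle one at a time), while you package the same reasoning more cleanly by noting that exactness in a category of graded modules is detected degree by degree, after which each component is either the original one (exact by hypothesis) or zero (trivially exact); this is a tidier presentation of essentially the same argument.
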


\begin{proof}
Let $0\to M' \xra{f} M \xra{g} M''\to 0$ be a short exact sequence of $\text{*}R$-modules. Since $(f)^{(n)}$ restricts the domain to $M'^{(n)}$ and $f$ is injective, then $(f)^{(n)}$ is injective, well. On the other hand, Let $m\in M''^{(n)}$. Then $m''\in M''$; the surjectivity of $g$ implies there is an $m\in M$ such that $g(m)=m''$. Since $g$ is graded and $n\mid \deg(m'')$, then $n\mid \deg(m)$. Therefore, $(g)^{(n)}(m)=m''$. 

Lastly, we show exactness at $M^{(n)}$. Let $m\in \ker((g)^{(n)})$; then $m\in \ker(g)$. Hence, there is an $m'\in M'$ such that $f(m')=m$. As $f$ is a graded map, $n\mid deg(m')$, implying $m'\in M'^{(n)}$. Therefore, $(f)^{(n)}(m')=m$, so $\ker((g)^{(n)})\subseteq \im((f)^{(n)})$. Now suppose that $m\in \im((f)^{(n)})$, then there is a $m'\in M'$ such that $(f)^{(n)}(m')=m$. Therefore, $g(f(m'))=0$, implying $m\in \ker(g)$. Since $m\in M^{(n)}$, it is immediate that $m\in \ker((g)^{(n)})$, completing the proof.
\end{proof}

We omit a proof of the following proposition, and note that a proof follows from elementary properties of graded $R$-modules and the $n$-th Veronese functor.

\begin{prop}\label{*vcwds}
Let *$R$ be a $\Z$-graded ring, $n\in \Z$, $\{M_i\}_{i\in I}$ a collection of graded $R$-modules. Then

\[
\left(\bigoplus_{i\in I}M_i\right)^{(n)}=~\bigoplus_{i\in I}M_i^{(n)}.
\].
\end{prop}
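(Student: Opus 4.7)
The plan is to verify the claimed equality by a componentwise check on graded pieces, since both the Veronese construction and direct sums of graded modules are defined degreewise, and then to note that the $R^{(n)}$-module structures on the two sides automatically agree because both are inherited from the ambient $R$-module structure.

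First, I would unpack the right-hand side. For each $j \in \Z$, the definition of the direct sum of graded modules gives
\[
\left(\bigoplus_{i\in I} M_i^{(n)}\right)_j = \bigoplus_{i\in I} \left(M_i^{(n)}\right)_j,
\]
and by the definition of $(-)^{(n)}$, the inner term equals $(M_i)_j$ if $n \mid j$ and $0$ otherwise. Thus
\[
\left(\bigoplus_{i\in I} M_i^{(n)}\right)_j = \begin{cases} \bigoplus_{i\in I} (M_i)_j & n \mid j,\\ 0 & \text{else.}\end{cases}
\]

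Next, I would unpack the left-hand side. By definition of the Veronese functor,
\[
\left(\left(\bigoplus_{i\in I} M_i\right)^{(n)}\right)_j = \begin{cases} \left(\bigoplus_{i\in I} M_i\right)_j & n \mid j,\\ 0 & \text{else,}\end{cases}
\]
and again by the definition of a direct sum of graded modules $\left(\bigoplus_{i\in I} M_i\right)_j = \bigoplus_{i\in I} (M_i)_j$. Comparing with the previous display shows the two sides agree as graded abelian groups.

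Finally, I would remark that both constructions inherit their $R^{(n)}$-action from the $R$-action on $\bigoplus_i M_i$ (equivalently, from the $R$-actions on the individual $M_i$), and the identity map on underlying sets is $R^{(n)}$-linear and preserves the grading, hence is an isomorphism (in fact, an equality) in $*R^{(n)}\text{-mod}$. There is no real obstacle here; the statement is essentially a tautology once one writes out the definitions, which is why it is reasonable to omit the proof in the text.
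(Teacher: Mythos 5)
Your proof is correct, and since the paper explicitly omits the argument as "elementary," your degreewise verification is exactly the kind of routine check the authors had in mind. Nothing further is needed.
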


\end{subsection}

%%%%%%%%%%%%%%%%%%%%%%%%%%%%%%%%%%%%%%

\begin{subsection}{Connections to *Injective Hulls}
\
In this subsection, we study how the Veronese functor behaves with *injective hulls. First, we prove the following lemma.

\begin{lem}[{\cite[Remark 1.9]{RS}}]\label{*reih}
Fix $k\in \Z$. Let $\mathfrak{p}$ be in $\text{*Spec}(R)$ and $y\in R \smallsetminus \mathfrak{p}$ be any homogeneous element of degree $n$. Then the graded map that is multiplication by $y$ on $\text{*}E_R(R/\mathfrak{p})(-k)$, namely 

\[
\text{*}E_R(R/\mathfrak{p})(-k) \xra{\cdot y} \text{*}E_R(R/\mathfrak{p})(-k+n),
\]

\

\noindent is an isomorphism. In particular, $y$ is homogeneous non zerodivisor on $\text{*}E_R(R/\mathfrak{p})(-k)$.
\end{lem}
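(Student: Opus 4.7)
The plan is to reduce the statement to the observation that $y$ becomes a homogeneous unit after homogeneous localization at $\p$, so multiplication by $y$ must be invertible on any $R_{(\p)}$-module, and in particular on $\text{*}E_R(R/\p)$.

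First, I would invoke Proposition \ref{*hull localization 1}(1), which gives the canonical isomorphism $\text{*}E_R(R/\p) \cong \text{*}E_R(R/\p)_{(\p)}$ of graded $R_{(\p)}$-modules. This endows $\text{*}E_R(R/\p)$ (and hence every shift of it) with a canonical $R_{(\p)}$-module structure extending its original $R$-module structure. Since $y \in R \smallsetminus \p$ is homogeneous of degree $n$, it is a homogeneous unit in $R_{(\p)}$, with inverse $y^{-1}$ of degree $-n$.

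With this structure in hand, the inverse of the multiplication-by-$y$ map is given explicitly by multiplication by $y^{-1}$, yielding a graded map
\[
\text{*}E_R(R/\p)(-k+n) \xrightarrow{\cdot y^{-1}} \text{*}E_R(R/\p)(-k)
\]
of degree zero. Both compositions $(\cdot y)\circ(\cdot y^{-1})$ and $(\cdot y^{-1})\circ(\cdot y)$ coincide with multiplication by the identity $y \cdot y^{-1} = 1$, which establishes the isomorphism. Finally, the ``in particular'' clause is immediate: injectivity of $\cdot y$ says precisely that $y$ annihilates no nonzero homogeneous element of $\text{*}E_R(R/\p)(-k)$, i.e.\ $y$ is a homogeneous non-zerodivisor.

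I do not anticipate any real obstacle here; the substantive content is entirely absorbed into Proposition \ref{*hull localization 1}. The only point requiring care is the bookkeeping for the degree shifts: one must check that $\cdot y^{-1}$, with $y^{-1}$ of degree $-n$ in $R_{(\p)}$, indeed lands in the correct shifted module $\text{*}E_R(R/\p)(-k)$ and is a degree-zero graded map, but this is a one-line verification. An alternative approach, avoiding the appeal to localization, would be to observe that the kernel of $\cdot y$ is a graded submodule whose intersection with $R/\p(-k)$ is zero (since $y \notin \p$) and invoke \emph{$*$}essentialness for injectivity, but surjectivity is less transparent along that route, so the localization-based argument is preferable.
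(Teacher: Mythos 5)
Your proof is correct and takes a genuinely different route than the paper's. You reduce everything to Proposition~\ref{*hull localization 1}(1): the canonical localization map endows $\text{*}E_R(R/\p)$ with an $R_{(\p)}$-module structure compatible with its $R$-action, $y$ becomes a homogeneous unit in $R_{(\p)}$, and $\cdot y^{-1}$ is the inverse; the degree bookkeeping (a degree-$(-n)$ element sending $E(-k+n)$ back to $E(-k)$) checks out. The paper instead gives a self-contained argument that never invokes the localization isomorphism: it observes that $N := W^{-1}(R/\p(-k))$ is a *essential submodule of $E := \text{*}E_R(R/\p)(-k)$ on which $\cdot y$ is bijective, so $\ker(\cdot y)$ meets $N$ trivially and hence vanishes (injectivity), and then that $yE$ is a *injective submodule of $E$ containing $N$, hence *essential in $E$, forcing $yE = E$ (surjectivity). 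Your ``alternative approach'' sketched at the end is essentially the paper's proof, and you are right that surjectivity is the less obvious half of that route — the paper handles it precisely via the maximality characterization of *injective hulls. One thing worth keeping in mind: Proposition~\ref{*hull localization 1}(1) is a cited result whose standard proof is itself the argument that multiplication by homogeneous elements outside $\p$ is bijective on the hull, so your appeal to it is legitimate but pushes the substantive work into the black box rather than eliminating it; the paper's argument makes that work explicit.
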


\begin{proof}
Set $E:=\text{*}E_R(R/\mathfrak{p})(-k)$. By Proposition \ref{soih}, we have $E=\text{*}E_R(R/\mathfrak{p}(-k))$.  Consider the multiplicative set: $W=\{r\in R~|~ r\notin \mathfrak{p} \text{ and $R$ is homogeneous}\}$ and set $W^{-1}(R/\mathfrak{p}(-k))=N$, which is a *$R$-module. Since $R/\mathfrak{p}(-k) \subseteq N$ is *essential, then a copy of $N$ is contained in $E$. Note that multiplication by $y$ is one-to-one on $N$ since $y\in W$ (in fact $yN=N$ since $y\in W$). Set $K=\ker (E\xra{\cdot y} E(+n))$. Then as $K$ is a graded submodule of $E$, we have that $K\cap N\neq 0$ or $K=0$. However, we must have $K\cap N=0$, so $K=0$; hence, multiplication on $E$ by $y$ is one-to-one. 

As $yE\cong E(-n)$ and $E(-n)$ is *injective, then $yE$ is *injective. Since $yN=N$, we have that $N \subseteq yE \subseteq E$. As $N \subseteq yE \subseteq E$ and $N \subseteq E$ is *essential, then it follows from Lemma \ref{*subsets}  that  $yE \subseteq E$ is *essential. By Theorem \ref{*iso} it follows that $yE=E$. Thus,

\[
E \xra{\cdot y} E(-n),
\]

\noindent is an isomorphism, as desired.
\end{proof}

An interesting consequence of Lemma \ref{*reih} is, under the hypothesis above, if the intersection $R\setminus \mathfrak{p} \cap R_1$ is nonempty, then *$E_R(R/\mathfrak{p})\cong \text{*}E_R(R/\mathfrak{p})(-k)$ for all $k\in \Z$. We copy this down for reference.

\begin{cor}\label{IC2}
Let $R$ be a $\Z$- graded ring. Let $\mathfrak{p}\in \text{*Spec}(R)$ such that $R\setminus \mathfrak{p}\cap R_1$ is nonempty. Then

\[
\text{*}E_R(R/\mathfrak{p})\cong \text{*}E_R(R/\mathfrak{p})(-k)
\]

\

\noindent for all $k\in \Z$.
\end{cor}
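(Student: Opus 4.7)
The statement is essentially an immediate consequence of Lemma \ref{*reih}, so my proof proposal is short and direct. Let me fix a homogeneous element $y \in R_1$ with $y \notin \mathfrak{p}$, which exists by hypothesis, and set $E := \text{*}E_R(R/\mathfrak{p})$. The plan is to apply Lemma \ref{*reih} with this particular $y$ (so $n = 1$) and chain the resulting isomorphisms.

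Applying Lemma \ref{*reih} with our fixed $y$ of degree $1$ yields, for every integer $j \in \Z$, a graded isomorphism
\[
E(-j) \xra{\cdot y} E(-j+1).
\]
Composing these maps gives $E \cong E(-1) \cong E(-2) \cong \cdots \cong E(-k)$ for every $k \geq 0$. For $k < 0$, I would instead compose the inverses of the isomorphisms above, which are still graded isomorphisms, obtaining $E \cong E(1) \cong E(2) \cong \cdots \cong E(-k)$. In either case we conclude $\text{*}E_R(R/\mathfrak{p}) \cong \text{*}E_R(R/\mathfrak{p})(-k)$ as graded $R$-modules for every $k \in \Z$.

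There is no real obstacle here: Lemma \ref{*reih} does all of the work, and the only step is to notice that a degree-$1$ element outside $\mathfrak{p}$ lets us bridge consecutive shifts, which is precisely what is needed to equate all integer shifts. The hypothesis $(R \setminus \mathfrak{p}) \cap R_1 \neq \emptyset$ is exactly what guarantees such a $y$ of degree $1$ exists, so the corollary follows by a one-line iteration from the lemma.
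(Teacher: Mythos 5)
Your proof is correct and matches the paper's approach exactly: the paper presents Corollary \ref{IC2} as an immediate consequence of Lemma \ref{*reih}, and your argument—choosing a degree-$1$ element $y \notin \mathfrak{p}$ and chaining the multiplication-by-$y$ isomorphisms $E(-j) \xra{\cdot y} E(-j+1)$ across all shifts—is precisely the intended one-line iteration.
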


%We call out a particularly important class of *prime ideals in a graded ring.

%\begin{definition}
%Let $R$ be a graded ring. We say that a *prime ideal $\p$ of $R$ is a \textbf{strongly *prime ideal} if $\p$ contains $R_1\cup R_{-1}$. We denote the set of strongly *prime ideals of $R$ by $STP(R)$.
%\end{definition}

%\begin{ex}
%Let $R=k[x_1,\hdots,x_n]$ with $k$ a field and $R$ with standard grading. Then the irrelevant maximal ideal is the only strongly *prime ideal. So, $STP(R)=\{(x_1,\hdots, x_n)\}$.
%\end{ex}

%We can bootstrap this example.

%\begin{ex}
%Let $R=A[x_1,\hdots, x_n]$ with $A$ a noetherian ring and $R$ with standard grading. Suppose $\p=(a_1,\hdots,a_m)$ is any prime ideal of $A$. Then $(x_1,\hdots, x_n,a_1,\hdots, a_m)$ is a strongly *prime ideal. In fact, $STP(R)=\{\p+(x_1,\hdots, x_n)~|~ \p \in \Spec(A)\}$.
%\end{ex}

%\begin{ex}
%Let $R=\Z[\hdots, x_{2}^{-1},x_1^{-1},x_1,x_2,\hdots]$ have grading $\deg(x_k)=1$ and $\deg(x_k^{-1})=-1$. Then there are no strongly *prime ideals. Indeed, if an ideal $\p$ contains $R_1\cup R_{-1}$, then $x_1x_1^{-1}=1\in \p$. 
%\end{ex}

%In all the examples above, for every *prime ideal $\p$ not in $\STP(R)$, there is a homogeneous element $y$ in $R_1\cup R_{-1}$ such that $y\in R\setminus \p$ and $y$ is not a zero divisor. We will give rings with this property a name.

%\begin{definition}
%Let $R$ be graded ring. We call $R$ a \textbf{quasi *domain of degree 1} if for every *prime ideal $\p\in \text{*}\Spec(R) \setminus STP(R)$, the set $(R_1\cup R_{-1}) \setminus \p$ contains an element that is not a zero divisor.
%\end{definition}

For the rest of the paper, we will refine the class of $\Z$-graded rings that we consider. In particular, we will be interested in the case where $R$ is a (not necessarily standard) graded, finitely generated $k$-algebra, where $k$ is any field. As a starting point in our investigation of how a Veronese functor interacts with *$R$-injective modules, we analyze how they behave with shifts of *injective hulls. We begin with a key lemma.

 \begin{lem}\label{VUL}
Fix $n\in \N$, and let $R$ be a positively graded $k$-algebra generated by finitely many elements of degree coprime to $n$. Let $z\in \Z$ and $\p \in \Proj(R)$, where we set $\Proj(R)=\text{*}\Spec(R)\smallsetminus \{\m\}$.  Then $E_R(R/\p(-z))^{(n)}$ is a *injective $R^{(n)}$-module.
 \end{lem}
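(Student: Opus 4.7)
The plan is to verify the Graded Baer's Criterion (Theorem \ref{gbc}) for $F^{(n)}$ over $R^{(n)}$, where $F := \text{*}E_R(R/\p(-z))$, using Lemma \ref{*reih} as the key technical input. The crucial observation enabled by the hypotheses is: because $R$ is positively graded with $R_0 = k$ (forced since the generators all have positive degree) and $\p \neq \m$, at least one algebra generator must lie outside $\p$ (else $\m = (x_1,\ldots,x_s) \subseteq \p$, contradicting $\p \in \Proj(R)$). Fix such a generator $x_a \notin \p$; by hypothesis, $d_a := \deg(x_a)$ is coprime to $n$.

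Given a graded ideal $J \subseteq R^{(n)}$, a shift $k \in \Z$, and a graded $R^{(n)}$-linear map $f : J(-k) \to F^{(n)}$, I will build a graded $R^{(n)}$-linear extension to $R^{(n)}(-k)$ in three stages. First, composing with $F^{(n)} \hookrightarrow F$, extend $f$ to an $R$-linear graded map $\tilde f : JR(-k) \to F$ by declaring $\tilde f(rj) := r f(j)$ on homogeneous products. Second, invoke *injectivity of $F$ over $R$ to further extend $\tilde f$ to a graded $R$-linear map $\hat f : R(-k) \to F$. Third, let $\pi : F \to F^{(n)}$ be the $R^{(n)}$-linear projection coming from the $R^{(n)}$-module decomposition $F = \bigoplus_{j=0}^{n-1} F^{\{j\}}$ into $\Z/n$-residue classes (each summand is $R^{(n)}$-stable since elements of $R^{(n)}$ preserve degree modulo $n$); then $\pi \circ \hat f|_{R^{(n)}(-k)}$ is the desired extension, since $\pi$ restricts to the identity on $F^{(n)}$ and hence on the image of $f$.

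The main obstacle will be the well-definedness of $\tilde f$. Suppose $\sum_i r_i j_i = 0$ in $R$ with $r_i \in R$ and $j_i \in J$ homogeneous; we must show $\sum_i r_i f(j_i) = 0$ in $F$. Splitting the relation degree-by-degree in $R$ and using that every $\deg(j_i)$ is divisible by $n$, we may assume all the $r_i$ share a common residue $s$ modulo $n$. Choose $m \in \N$ with $m d_a \equiv -s \pmod{n}$ (possible because $\gcd(d_a,n) = 1$) and set $\nu := x_a^m$. Then $\nu \notin \p$ and $\nu r_i \in R^{(n)}$ for every $i$, so by $R^{(n)}$-linearity of $f$,
\[
\nu \cdot \sum_i r_i f(j_i) \;=\; \sum_i f(\nu r_i \, j_i) \;=\; f\Big(\nu \sum_i r_i j_i\Big) \;=\; 0.
\]
By Lemma \ref{*reih}, multiplication by $\nu$ is an isomorphism on $F$, so $\sum_i r_i f(j_i) = 0$. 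This step is precisely where the coprimality hypothesis on the degrees of the generators enters the argument.

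Once $\tilde f$ is well-defined, $\tilde f$ is automatically graded and $R$-linear by construction, the remaining two stages are routine, and the Graded Baer's Criterion certifies that $F^{(n)} = \text{*}E_R(R/\p(-z))^{(n)}$ is *injective over $R^{(n)}$.
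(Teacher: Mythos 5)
Your proof is correct and takes essentially the same route as the paper's: extend $f$ to an $R$-linear graded map on $JR(-k)$ (with well-definedness secured by multiplying through by a suitable power of a generator $x_a \notin \p$ of degree coprime to $n$, and then cancelling via Lemma~\ref{*reih}), invoke *injectivity of $\text{*}E_R(R/\p(-z))$ over $R$, and come back down to $R^{(n)}$. The only cosmetic variation is your explicit projection $\pi \colon F \to F^{(n)}$ in the final step, where the paper simply restricts $\psi$ to $R^{(n)}(-k)$ and (implicitly) notes the image lands in $F^{(n)}$ when $n \mid k$; this is a minor gain in uniformity, not a different argument.
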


 \begin{proof}
Assume $\p \in \Proj(R)$. We show that $E_R(R/\p(-z))^{(n)}$ is a *injective $R^{(n)}$-module using graded Baer's criterion. Our strategy amounts to showing that any graded map $\phi: I(-j)\to E_R(R/\p(-z))^{(n)}$ can be extended to a graded $R$-linear map $\overline\phi:IR(-j)\to E_R(R/\p(-z))$. Once we show that $\phi$ extends to $\overline{\phi}$, then there is a map $\psi:R(-j)\to E_R(R/\p(-z)$ that restricts to $\overline{\phi}$ on $IR(-j)$ since $E_R(R/\p(-z))$ is *injective. Let $\theta=\psi|_R^{(n)}$. Then 

\[
\theta|_{I(-j)}=\psi|_{I(-j)}=\overline{\phi}|_{I(-j)}=\phi.
\]

\

\noindent In particular, for any graded homomorphism  $\phi: I(-j)\to E_R(R/\p(-z))^{(n)}$, there is a lift $\theta: R^{(n)}(-j)\to  E_R(R/\p(-z))^{(n)}$ of $\phi$. Therefore, by the graded Baer's Criterion, $E_R(R/\p)^{(n)}$ is *injective over $R^{(n)}$.

Now, we prove that any graded map $\phi: I(-j)\to E_R(R/\p(-z))^{(n)}$ can be extended to a graded map $\overline{\phi}:I(-j)R\to E_R(R/\p(-z))$. let $I=(f_1,\hdots,f_k)$ be a graded ideal of $R^{(n)}$. Consider a graded map $\phi: I(-j)\to E_R(R/\p)(-z)$. Then the graded $R^{(n)}$-module  $I(-j)$ is generated by $\{f_1,\hdots, f_k\}$. Similarly,  $IR(-j)$ is generated by $\{f_1,\hdots,f_k\}$ as a graded $R$-module.

For a homogeneous element $t\in IR(-j)$, since $IR(-j)$ is a graded $R$-module, we may write

\[
t=\sum_ir_if_i,
\]

\noindent where each $r_i\in R$ is homogeneous in $R$ and $\deg_{-j}(r_if_i)=\deg_{-j}(t)$ for all $i$. We define the function $\overline{\phi}$ on homogeneous elements of $IR(-j)$ by 

\[
\overline{\phi}(t)=\sum_ir_i\phi(f_i).
\]

\

\noindent  We then extend, $R$-linearly, $\overline{\phi}$ to all of $IR(-j)$.  Since every element of $IR(-j)$ is uniquely expressed as a sum of homogeneous elements of $IR(-j)$, to show that $\overline{\phi}$ is well defined, it suffices to show that $\overline{\phi}$ is well defined on the set of homogeneous elements of $IR(-j)$. We first show that $\overline{\phi}|_{I(-j)}=\phi$. To this end, let $f\in IR^{(n)}(-j)$ be such that $t$ is homogeneous. Then we may write 

\[
\sum_i p_if_i=t=\sum_ir_if_i,
\]

\noindent where each $r_i,p_i\in R$ are homogeneous and $\deg_{-j}(r_if_i)=\deg_{-j}(t)=\deg_{-j}(p_jf_j)$. Since $f,f_i\in R^{(n)}$, we see that $r_i,p_i\in R^{(n)}$. Indeed,

\[
\deg_R(t)-j=\deg_{-j}(t)=\deg_{-j}(p_if_i)=\deg_R(p_i)+\deg_{-j}(f_i)=\deg_R(p_i)+\deg_R(f_i)-j.
\]

\noindent Hence, $\deg_R(x)=\deg_R(f_i)+\deg_R(p_i)$, implying $n$ divides $\deg_R(p_i)$, so $p_i\in R^{(n)}$ for all $i$. Similarly, we have $r_i\in R^{(n)}$ for all $i$. Therefore, 

\[
\overline{\phi}(t)=\overline{f}(\sum_i p_if_i)=\sum p_i\phi(f_i)=\sum \phi(p_if_i)=\phi(t).
\]

\

\noindent Similarly, 

\[
\overline{\phi}(t)=\overline{\phi}(\sum_j r_if_i)=\sum r_i\phi(f_i)=\sum \phi(r_if_i)=\phi(t).
\]

\noindent Hence, $\overline{\phi}$ is well defined on the homogeneous elements of $I(-j)\subseteq IR(-j)$ and is equal to $\phi$ on this set.

Now, let $t\in I(-j)$ be any element (here we note that $t\in R^{(n)}(-j)$; it just may not be homogeneous). We may uniquely write $f=\sum r_i$ with each $r_i$ homogeneous in $R(-j)$ with $\deg_{-j}(r_i)=i$. Since $t\in I(-j)$, we must have each $r_i\in R^{(n)}(-j)$. As $I$ is graded, each $r_i\in I \subseteq R^{(n)}$. Hence, by definition of $\overline{\phi}$ and the work above, we have

\[
\overline{\phi}(x)=\sum\overline{\phi}(r_i)=\sum \phi(r_i)=\phi(x).
\]

\

\noindent Therefore, $\overline{\phi}|_{I(-j)}=\phi$. 

\

 Suppose $t\in IR(-j)$ is homogeneous of degree $d$. We write

\[
\sum_ip_if_i=t=\sum_ir_if_i,
\]

\

\noindent with $r_i,p_i$ homogeneous elements in $R$ . Since $\p\in \Proj(R)$ all $k$-algebra generators of $R$ have degrees co-prime to $n$, there is a $k$-algebra generator, call it $y$, with $y\notin \p$ and $y\notin R^{(n)}$. We show that there is an $M\in \N$, such that $y^Mt\in R^{(n)}(-j)$. To this end, $y^Mt\in R^{(n)}(-j)$ if and only if $n\mid \deg(y^Mt)$. Since $\deg(y)$ and $n$ are co-prime, there are integers $c_1$ and $c_2$ such that $c_1\deg(y)+c_2n=j-d$. Without loss of generality, we may assume that $c_1\geq 1$. Setting $M=c_1$, yields 

\[
\deg_R(y^Mt)=\deg_R(y^M)+\deg_R(t)=M\deg(y)+d-j=-c_2n.
\]

\

%b) Suppose that $\deg(y)=1$ and $d-j\leq 0$. Setting $M=j-d$, yields 

%\[
%\deg_R(y^Mt)=\deg_R(y^M)+\deg_R(t)=M+d-j=0.
%\]

\noindent Therefore, the claim follows. Hence,

\[
y^M\left( \sum_ip_if_i\right)=y^M\left(\sum_ir_if_i\right)
\]

\noindent is an equality in $R^{(n)}(-j)$. Since $\overline{\phi}|_{I(-j)}=\phi$ is well defined

\[
\overline{\phi}\left(y^M\left( \sum_ip_if_i\right)\right)=\overline{\phi}\left(y^M\left(\sum_ir_if_i\right)\right).
\]

\

\noindent Thus,

\[
\phi\left(y^M\left( \sum_ip_if_i\right)\right)=\phi\left(y^M\left(\sum_ir_if_i\right)\right).
\]

\

\noindent As $y^M\sum_ip_if_i$ and $y^M\sum_ir_if_i$ are in $I(-j)$ and each $f_i\in R^{(n)}(-j)$, then each $y^Mp_i$ and $y^Mr_i$ are in $R^{(n)}$. Therefore

\[
y^M\left(\sum_ip_i\phi(f_i)\right)=y^M\left(\sum_ir_i\phi(f_i)\right).
\]

\

\noindent As $y$ is not in $\p$, we have that $y$ is not a zero divisor in $E_R(R/\p)(-k)$. Hence, $y^M$ is not a zero divisor in $E_R(R/p(-k))^{(n)}$; therefore the above equality implies that 

\[
\overline{\phi}(\sum_ip_if_i)=\sum_ip_i\phi(f_i)=\sum_ir_i\phi(f_i)=\overline{\phi}(\sum_ir_if_i).
\]

\noindent Hence, $\overline{\phi}$ is well defined on homogeneous elements of $IR(-j)$, implying $\overline{\phi}$ is well defined on all of $IR(-j)$.

By construction $\overline{\phi}$ is $R$-linear. Moreover, $\overline{\phi}$ is graded. Indeed, let $r\in IR(-j)$ be homogeneous. Then $r=\sum r_i f_i$ with $r_i\in R$ and $f_i\in IR^{(n)}(-j)$ homogeneous with $\deg_{-j}(r_if_i)=\deg_{-j}(r)$. On the other hand, we have that $\overline{\phi}(r)=\sum r_i \phi(f_i)$. Now, $\deg_{-k}(\sum r_i \phi(f_i))=\deg(r_i)+\deg_{-k}(\phi(f_i))$. As $f$ is graded homomorphism, we have that $\deg_{-k}(\overline{\phi}(r))=\deg_{-j}(r)$, which shows that $\overline{\phi}$ is graded. \qedhere

 \end{proof}

An important consequence of Lemma \ref{VUL} is:

\begin{cor}\label{C2}
Fix $n\in \N$, and let $R$ be a positively graded $k$-algebra generated by finitely many elements of degree coprime to $n$. Suppose that $M$ is a *injective $R$-module, and $\Ass_R(M)\subseteq \Proj(R)$. Then $M^{(n)}$ is a *injective $R^{(n)}$-module.
\end{cor}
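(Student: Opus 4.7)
The plan is to combine the structure theorem for *injective modules (Theorem \ref{*structure}) with Lemma \ref{VUL}; the hard technical work has already been done in the latter, so this corollary amounts to an assembly argument.

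First, since $M$ is *injective, Theorem \ref{*structure} provides a decomposition
\[
M \cong \bigoplus_{\substack{\p \in \text{*}\Spec(R) \\ k \in \Z}} \text{*}E_R(R/\p)(-k)^{\text{*}\eta(\p,k,M)}.
\]
I would then argue that only primes $\p \in \Proj(R)$ can contribute nonzero summands. Indeed, $R/\p \subseteq \text{*}E_R(R/\p)$ is *essential, so by Lemma \ref{*ass} the unique associated prime of $\text{*}E_R(R/\p)(-k)$ is $\p$; since $\Ass_R(M) \subseteq \Proj(R)$, any $\p$ equal to the homogeneous maximal ideal has $\text{*}\eta(\p,k,M) = 0$.

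Next, apply the Veronese functor. By Proposition \ref{*vcwds}, $(\--)^{(n)}$ commutes with arbitrary direct sums, so
\[
M^{(n)} \cong \bigoplus_{\substack{\p \in \Proj(R) \\ k \in \Z}} \bigl(\text{*}E_R(R/\p)(-k)^{(n)}\bigr)^{\text{*}\eta(\p,k,M)}.
\]
By Lemma \ref{VUL}, every summand on the right is a *injective $R^{(n)}$-module.

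The only remaining step is to close the argument by observing that arbitrary direct sums of *injective modules over a Noetherian $\Z$-graded ring are *injective. This is the graded analogue of the Bass–Papp theorem, and follows quickly from the graded Baer's criterion (Theorem \ref{gbc}): since $R^{(n)}$ is Noetherian (as $R$ is a finitely generated $k$-algebra, so is $R^{(n)}$), any graded ideal $I \subseteq R^{(n)}$ is finitely generated, so a graded map $I(-j) \to \bigoplus_\alpha N_\alpha$ lands in a finite subsum, and finite direct sums of *injectives are *injective. The mildest subtlety to verify is thus the ``direct-sum compatibility'' of *injectivity; all genuine content lies in Lemma \ref{VUL}.
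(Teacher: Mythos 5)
Your proposal matches the paper's proof essentially step for step: both decompose $M$ via Theorem \ref{*structure} into a direct sum of shifted *injective hulls supported only on $\Proj(R)$ (using $\Ass_R(M)\subseteq\Proj(R)$), push the Veronese functor across the direct sum via Proposition \ref{*vcwds}, invoke Lemma \ref{VUL} on each summand, and then appeal to the fact that direct sums of *injectives over a Noetherian graded ring are *injective. You are slightly more explicit than the paper in two places—justifying why only primes in $\Proj(R)$ can contribute, and sketching the graded Bass--Papp argument for the final direct-sum step—but the overall route is the same.
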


\begin{proof}
By Theorem \ref{*structure} and assumption, we have that 

\[
M\cong \bigoplus_{\substack{\mathfrak{p} \in \Proj(R) \\ z\in \Z}} \text{*}E_R(R/\mathfrak{p})(-z)^{\oplus\text{*}\eta(i,z,\p,M)},
\]

\

\noindent  By Lemma \ref{*shift} we have that each $\text{*}E_R(R/\mathfrak{p})(-z)^{\text{*}\mu(i,z,\p,M)}$ is *injective. Moreover, by Proposition \ref{*vcwds}

\[
M^{(n)}\cong \bigoplus_{\substack{\mathfrak{p}\in  \Proj(R)\\ z\in \Z}} \text{*}(E_R(R/\mathfrak{p})(-z)^{\oplus\text{*}\eta(i,z,\p,M)})^{(n)}.
\]

\

\noindent By Proposition \ref{VUL} we have that each $\text{*}(E_R(R/\mathfrak{p})(-z))^{(n)}$ is a *injective *$R^{(n)}$-module. As direct sums of *injective modules are *injective, we have that $M^{(n)}$ is *injective, as desired. \qedhere

\end{proof}

The following example shows the assumption in Corollary \ref{C2} that the homogeneous maximal ideal is not an associated prime of $M$ is necessary.

\begin{ex}\label{Jack's Example}
Let $R=k[x,y]$ with the standard grading and consider the injective module $E:=E_R(k)\cong\Hom_k(R,k)$. Lemma 2.3 implies that $E=\text{*}E_R(k)$. By Lemma 2.4, we have that $M:=E(-1)$ is *injective. Also, *$\Ass(M)=\{(x,y)\}$. We show that $M^{(2)}$ is not *injective. 

Let $I=(x^2, xy)(+2)$ be a shifted, graded ideal of $R^{(2)}$. Define the *$R^{(2)}$-linear map 

\[
f:I=(x^2, xy)(+2) \to M^{(2)}
\]

\

\noindent by $f(x^2)=y^*$ and $f(xy)=0^*$, where $y^*\in R^\star$ is defined by $y^*(\sum_{i,j}k_{i,j}x^iy^j)=k_{0,1}$ and $0^*$ is the zero map. It is not hard to see that $f$ is well defined. If, for sake of contradiction, $f$ extended to a map $g:R^{(2)}(+2)\to M^{(2)}$, then 

%To see that $f$ is well defined we note that $I$ lives in degrees $i\geq 0$. So, for a homogeneous element $x$ of degree i\geq 1 in $I$, $f(x)=0$ since $M^{(2)}_i=0$ for $i\geq 1$. 

\[
x^2\cdot g(1)=y^* \text{    and    } xy \cdot g(1)=0^*.
\]

\noindent The first equality implies that $1=(x^2 \cdot g(1))(y)=g(1)(x^2y)$, so $g(1)=(x^2y)^*$. But Then

\[
0=(xy\cdot g(1))(x)=(x^2y)^*(x^2y)=1,
\]

\noindent a contradiction. Thus, $f$ cannot be extended to a $g:R^{(2)}(+2)\to M^{(2)}$. Hence, we see that $M^{(2)}$ is not injective. 

\end{ex}

%%%%%%%%%%%%%%%%%%%%%%%%%%%%%%%%%%%%%%%%%%%%%%%%%%%%%%%%%%%%%%%%%

The following two lemmas are key to the proof of Corollary \ref{EUC1}.

\begin{lem}\label{veronese *essential}
 Fix $n\in \N$, and let $R$ be a positively graded $k$-algebra generated by finitely many elements of degree coprime to $n$. Let $z\in \Z$ and $\p \in \Proj(R)$. Then 

\[
(R/\mathfrak{p})(-z)^{(n)} \subseteq \text{*}E_R((R/\mathfrak{p})(-z))^{(n)}
\]

\noindent is *essential. In particular,

\[
\text{*}E_R((R/\mathfrak{p})(-z))^{(n)}=\text{*}E_{R^{(n)}}((R/\mathfrak{p})(-z)^{(n)}).
\]
\end{lem}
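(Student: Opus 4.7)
The plan is to prove *essentiality of the inclusion directly; the equality of *injective hulls then follows by combining *essentiality with Corollary \ref{C2} and the uniqueness of maximal *essential extensions (Theorem \ref{*iso}).

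For *essentiality, let $U$ be a nonzero graded $R^{(n)}$-submodule of $\text{*}E_R((R/\p)(-z))^{(n)}$ and pick a nonzero homogeneous $u \in U$. Since $(R/\p)(-z) \subseteq \text{*}E_R((R/\p)(-z))$ is *essential over $R$, there exists a homogeneous $r \in R$ with $0 \neq ru \in (R/\p)(-z)$. The catch is that $\deg(r)$ need not be divisible by $n$, so $r$ may fail to lie in $R^{(n)}$; I would remedy this by multiplying $r$ by a suitable power of a carefully chosen generator.

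Because $\p \in \Proj(R)$ we have $\p \neq \m$, so at least one $k$-algebra generator of $R$, call it $y$, satisfies $y \notin \p$, and by hypothesis $\gcd(\deg(y), n) = 1$. Pick $c \in \N$ with $c\deg(y) + \deg(r) \equiv 0 \pmod n$ (possible by coprimality) and set $s := y^c r$; then $s \in R^{(n)}$. By Lemma \ref{*reih} multiplication by $y$ is a graded isomorphism between shifts of $\text{*}E_R((R/\p)(-z))$, hence injective, so $su = y^c(ru) \neq 0$. Moreover $su$ lies in $(R/\p)(-z)$ because $ru$ does and $(R/\p)(-z)$ is an $R$-submodule, has degree $\equiv 0 \pmod n$ by construction (so lies in $(R/\p)(-z)^{(n)}$), and lies in $U$ because $s \in R^{(n)}$ and $U$ is an $R^{(n)}$-submodule. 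Therefore $U \cap (R/\p)(-z)^{(n)} \neq 0$.

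For the ``in particular'' assertion, the unique associated prime of $\text{*}E_R((R/\p)(-z))$ is $\p$ (by Lemma \ref{*ass} applied to the *essential extension $(R/\p)(-z) \subseteq \text{*}E_R((R/\p)(-z))$), which lies in $\Proj(R)$, so Corollary \ref{C2} makes $\text{*}E_R((R/\p)(-z))^{(n)}$ a *injective $R^{(n)}$-module. Combined with the *essentiality just shown, the standard fact that any *injective *essential extension of $(R/\p)(-z)^{(n)}$ must be isomorphic to its *injective hull (an immediate consequence of Theorem \ref{*iso}, since a *injective extension is automatically a maximal *essential extension) yields the equality $\text{*}E_R((R/\p)(-z))^{(n)} = \text{*}E_{R^{(n)}}((R/\p)(-z)^{(n)})$. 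The main obstacle is the degree-matching step in the third paragraph; the coprimality of $\deg(y)$ and $n$ together with the existence of a $k$-algebra generator outside $\p$ (which fails exactly when $\p = \m$) are precisely the hypotheses that make this fix possible.
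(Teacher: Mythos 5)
Your proof is correct and follows essentially the same approach as the paper: both start from *essentiality of $(R/\p)(-z)\subseteq\text{*}E_R((R/\p)(-z))$ over $R$ to produce a homogeneous $r$ with $0\neq ru\in (R/\p)(-z)$, then multiply by a suitable power of a generator outside $\p$ of degree coprime to $n$ to land in degree divisible by $n$, using that this element acts injectively on the hull; the ``in particular'' clause follows in both arguments from *injectivity of the Veronese of the hull (Lemma~\ref{VUL}/Corollary~\ref{C2}) together with Theorem~\ref{*iso}.
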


\begin{proof}

Let $m\in \text*E_R((R/\mathfrak{p})(-z))^{(n)}$. Since $(R/\mathfrak{p})(-z) \subseteq \text{*}E_R((R/\mathfrak{p})(-z)$ is *essential there is a homogeneous $y\in R$ such that $ym\in (R/\mathfrak{p})(-z)$ and $ym\neq 0$. 

We set $\deg(y)=j$ and $\deg(m)=dn+z$ for some $d\in \Z$. Since $\p \in \Proj(R)$, there is a homogeneous element, $x\in R\smallsetminus \p$, whose degree is co-prime to $n$. In particular, any power of $x$ is not a zero divisor of $(R/\mathfrak{p})(-z)\subset \text{*}E_R((R/\mathfrak{p})(-z))$, and there exists a positive integer $c_1$ and (possibly negative) integer $c_2$ where we have $c_1\deg(x)+c_2n=-(j+z)$. Set $N=c_1$. Then $x^{N}ym\in (R/\mathfrak{p})(-k)$ is nonzero, and $x^{N}ym\in (R/\mathfrak{p})(-k)^{(n)}$. Thus, $(R/\mathfrak{p})(-k)^{(n)} \subseteq \text{*}E_R((R/\mathfrak{p})(-k))^{(n)}$ is *essential.

Since $(R/\mathfrak{p})(-k)^{(n)} \subseteq \text*E_R((R/\mathfrak{p})(-k))^{(n)}$ is *essential, and $\text*E_R((R/\mathfrak{p})(-k))^{(n)}$ is a *injective *$R^{(n)}$-module by Proposition \ref{VUL}, then we have that $\text*E_R((R/\mathfrak{p})(-k))^{(n)}$ and $ \text{*}E_{R^{(n)}}((R/\mathfrak{p})(-k)^{(n)})$ are isomorphic as $R^{(n)}$-modules  by Theorem \ref{*iso}.\qedhere

%Let $x\in (\text*E_R(M))^{(n)}$ be a nonzero element. By Theorem \ref{*structure}, we have that 

%\[
%\text*E_R(M) \cong \bigoplus_{\substack{\mathfrak{p}\in \mathrm{\text{*}Spec}(R)\setminus R^+ \\ k\in \Z}} \text{*}E_R(R/\mathfrak{p})(-k)^{\text{*}\mu(i,k,\p,M)}.
%\]

%Since there is an isomorphic copy of $\text{*}E_R(R/\mathfrak{p})(-k)\subseteq \text{*}E_R(M)$ as *$R$-modules, then there is an isomorphic copy of $R/\mathfrak{p} (-k) \subseteq \text{*}E_R(R/\mathfrak{p})(-k)\subseteq \text{*}E_R(M)$ as *$R$-modules by Lemma \ref{soih}.

%Viewing $x$ as an element of $\text*E_R(M)$, there is a $y\in \text{*}R$ such that $yx\in R/\mathfrak{p} (-k)$
\end{proof}

% The following doesn't work because $R^{(n)}$ is generally zero in degree 1

%An intriguing consequence of Corollary \ref{IC2} and Proposition \ref{veronese *essential} is the following Corollary.

%\begin{cor}
%Suppose that $R$ is a standard graded, finitely generated $R_0$-algebra.  Let $n\in \N$, and $R^+$ be the irrelevant ideal of $R$. Let $\mathfrak{p} \not\supseteq R^+$. Then

%\[
%\text{*}E_{R^{(n)}}(R^{(n)}/\mathfrak{p}^{(n)}(-k))\cong\text{*}E_{R^{(n)}}(R^{(n)}/\mathfrak{p}^{(n)})=\text{*}E_R(R/\mathfrak{p})^{(n)}\cong\text{*}E_R(R/\mathfrak{p}(-k))^{(n)}
%\]
%\end{cor}

%\begin{proof}
%Since $\mathfrak{p}\notin $ By Corollary \ref{IC2}, the map  

%\[
%\text{*}E_R(R/\mathfrak{p})(-k) \xra{\cdot y^k} \text{*}E_R(R/\mathfrak{p}),
%\]

%is an isomorphism. Thus, we have that 

%\begin{equation}\label{e1}
%\text{*}E_{R^{(n)}}(R^{(n)}/\mathfrak{p}^{(n)}(-k)) \cong \text{*}E_{R^{(n)}}(R^{(n)}/\mathfrak{p}^{(n)}).
%\end{equation}

%\

%By Proposition \ref{veronese *essential}  

%\begin{equation}\label{e2}
%\text{*}E_{R^{(n)}}(R^{(n)}/\mathfrak{p}^{(n)})=\text{*}E_R(R/\mathfrak{p})^{(n)}.
%\end{equation}

%\

%Finally, by Corollary \ref{IC2}, we have that

%\begin{equation}\label{e3}
%\text{*}E_R(R/\mathfrak{p})^{(n)}=\text{*}E_R(R/\mathfrak{p}(-k))^{(n)}.
%\end{equation}

%\

%Combining equations \ref{e1}, \ref{e2}, and \ref{e3}, we obtain the result.
%\end{proof}

\begin{rem}\label{*hull structure}
We make an important observation. Let $M$ be a graded $R$-module. Theorem \ref{*structure} implies that 

\[
\text{*}E_R(M)=\bigoplus_{\substack{\mathfrak{p}\in \mathrm{\text{*}Spec}(R) \\ k\in \Z}}\text{*}E_R(R/\mathfrak{p}(-k))^{\oplus\text{*}\mu(0,k,\p,M)}.
\]

\

\noindent Fix a $\text{*}E_R(R/\mathfrak{p}(-k))$ appearing in the decomposition above. There is then an isomorphic copy of $R/\mathfrak{p}(-k)$ in *$E_R(M)$ such that $R/\mathfrak{p}(-k)\subseteq \text{*}E_R(R/\mathfrak{p}(-k))$ is *essential as submodules of *$E_R(M)$. Let $x\in\text{*}E_R(R/\mathfrak{p}(-k))\subseteq \text{*}E_R(M)$. Since we have $R/\mathfrak{p}(-k)\subseteq \text{*}E_R(R/\mathfrak{p}(-k))$ is *essential inside of *$E_R(M)$, there is a homogeneous $r\in R$ such that $rx\neq 0$ and $rx\in R/\mathfrak{p}(-k)\subseteq E_R(M)$. Since $M\subseteq \text{*}E_R(M)$ is *essential, there is a homogeneous $r'\in R$ such that $m:=r'rx$ is a nonzero element of $M$. Thus, $\ann_R(m)=\p$.

\

To summarize, for every $x$ in the copy of $\text{*}E_R(R/\mathfrak{p}(-k))\subseteq \text{*}E_R(M)$, there is a homogeneous $s\in R$ such that $sx=m$ is a nonzero element of $M$ and $\ann(m)=\mathfrak{p}$.
\end{rem}

\begin{lem}\label{KL}
Fix $n\in \N$, and let $R$ be a positively graded $k$-algebra generated by finitely many elements of degree coprime to $n$. Assume that $M$ is a graded $R$-module such that $\Ass_R(M)\subseteq \Proj(R)$. Then

\[
M^{(n)} \subseteq (\text{*}E_R(M))^{(n)}
\]

\

\noindent is *essential.
\end{lem}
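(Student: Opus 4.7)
The plan is to verify the *essentiality of $M^{(n)} \subseteq (\text{*}E_R(M))^{(n)}$ directly from the definition. Given a nonzero homogeneous $x \in (\text{*}E_R(M))^{(n)}$, which is just a homogeneous element of $\text{*}E_R(M)$ whose degree happens to be divisible by $n$, I must produce a homogeneous $t \in R^{(n)}$ so that $tx$ is a nonzero element of $M^{(n)}$. The only subtlety is that *essentiality of $M \subseteq \text{*}E_R(M)$ furnishes multipliers in $R$, not in $R^{(n)}$, so the work lies in upgrading such a multiplier to one in $R^{(n)}$ without killing the product.

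First, I would use *essentiality of $M$ in $\text{*}E_R(M)$ together with the structural description in Remark \ref{*hull structure} (or, equivalently, standard graded associated-prime theory) to find a homogeneous $s \in R$ for which $m := sx$ is a nonzero element of $M$ whose annihilator is some $\p \in \text{*}\Ass_R(M)$. The hypothesis $\text{*}\Ass_R(M) \subseteq \Proj(R)$ then guarantees $\p$ is not the homogeneous maximal ideal of $R$. Combined with the assumption that $R$ is generated as a $k$-algebra by finitely many elements whose degrees are coprime to $n$, this allows me to pick such an algebra generator $y$ with $y \notin \p$; in particular $\gcd(\deg(y), n) = 1$.

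Since $\deg(y)$ is a unit modulo $n$, I may choose a positive integer $N$ with $N \deg(y) + \deg(s) \equiv 0 \pmod{n}$, so that $t := y^N s$ lies in $R^{(n)}$. Because $y \notin \p = \ann_R(m)$ and $\p$ is prime, $y^N$ is a non-zero-divisor on $m$, hence $tx = y^N m$ is nonzero in $M$. Its total degree equals $\deg(t) + \deg(x)$, both divisible by $n$, so $tx \in M^{(n)}$. This verifies the *essentiality criterion.

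The main obstacle is precisely the one flagged at the outset: converting an arbitrary $R$-multiplier into an $R^{(n)}$-multiplier while keeping the product nonzero. The assumption $\text{*}\Ass_R(M) \subseteq \Proj(R)$ is what buys us the needed room, guaranteeing an algebra generator $y \notin \p$; and the coprimality hypothesis on degrees ensures some positive power $y^N$ can adjust $\deg(s)$ to land in $n\Z$ without introducing a zero-divisor on $m$. Example \ref{Jack's Example} shows that the Proj hypothesis cannot be dropped, which is consistent with the fact that it is invoked in exactly one, crucial, step of this argument.
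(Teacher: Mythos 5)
Your proof is correct and, modulo one small imprecision in citation, arrives at the same key step as the paper by a shorter route. The paper's proof first invokes Theorem \ref{*structure} to decompose $\text{*}E_R(M)$ into indecomposable summands $\text{*}E_R(R/\p(-k))$, writes a nonzero homogeneous $x\in(\text{*}E_R(M))^{(n)}$ as a finite tuple of components, and inducts on the number of nonzero components; the base case (one component) uses Remark \ref{*hull structure} to produce a homogeneous $s$ with $sx\in M$ nonzero and $\ann(sx)=\p\in\Proj(R)$, and the inductive step is a pasting argument across summands. You bypass the decomposition and the induction entirely: given any nonzero homogeneous $x$, you first use *essentiality of $M\subseteq\text{*}E_R(M)$ to obtain a nonzero homogeneous multiple $s_0x\in M$, then apply standard graded associated-prime theory to the cyclic graded submodule $R(s_0x)\subseteq M$ to upgrade the multiplier to one whose image has a prime annihilator $\p\in\text{*}\Ass_R(M)\subseteq\Proj(R)$. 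After this point the two arguments coincide: pick an algebra generator $y\notin\p$ of degree coprime to $n$, raise it to an appropriate positive power to push the degree of the multiplier into $n\Z$, and observe that this power of $y$ cannot annihilate an element with annihilator $\p$. One small caveat: Remark \ref{*hull structure} as stated applies to an element lying in a single indecomposable summand, so your parenthetical ``or, equivalently, standard graded associated-prime theory'' is what actually carries the argument for general $x$; it is not literally the content of that Remark. With that understood, your version is a genuine and cleaner simplification, trading the structure-theoretic decomposition and component-counting induction for a direct appeal to associated primes of $M$ itself.
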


\begin{proof}
By Theorem \ref{*structure} we may write

\[
\text{*}E_R(M)\cong \bigoplus_{\substack{\mathfrak{p}\in \Proj(R) \\ k\in \Z}} \text{*}E_R(R/\mathfrak{p}(-k))^{\oplus\text{*}\mu(i,k,\p,M)},
\] 

\

\noindent Proposition \ref{*vcwds} yields

\[
(\text{*}E_R(M))^{(n)}\cong \bigoplus_{\substack{\mathfrak{p}\in \Proj(R) \\ k\in \Z}} \text{*}(E_R(R/\mathfrak{p})(-k)^{\oplus\text{*}\mu(0,k,\p,M)})^{(n)}.
\]

\

\noindent By Proposition \ref{veronese *essential}, we then have

\[
(\text{*}E_R(M))^{(n)}\cong \bigoplus_{\substack{\mathfrak{p}\in \Proj(R) \\ k\in \Z}} \text{*}E_{R^{(n)}}((R/\mathfrak{p}(-k))^{(n)})^{\oplus\text{*}\mu(0,k,\p,M)}.
\]

\

\noindent Let $x:=(x_1,x_2, \hdots, x_m,0,\hdots)\in \text{*}E_R(M)^{(n)}$ be nonzero. We show by induction on $m$, that $R^{(n)}x\cap M^{(n)}\neq 0$. Let $m=1$, and let $x_1\in \text{*}E_{R^{(n)}}((R/\mathfrak{p})(-k))^{(n)}$. By Remark \ref{*hull structure} , there is a homogeneous $r\in R$ such that $rx_1\in M$ and $\ann(rx_1)=\mathfrak{p}$. Let $\deg(r)=j$. Since $\p\in \Proj(R)$, there is a generator $s\in R\setminus \p$ of degree coprime to $n$. In particular, there is a positive integer $c_1$ and (possibly negative) integer $c_2$ such that $c_1\deg(s)+c_2n=-j$. Set $M=c_1$. Then $s^Mrx_1\neq 0$ and is an element of $M^{(n)}$. This completes the base case.

 Suppose for $m\leq n$ and $x:=(x_1,x_2, \hdots, x_m,0,\hdots)\in (\text{*}E_R(M))^{(n)}$, there is a homogeneous $r\in R^{(n)}$ such that $rx\in M^{(n)}$ is nonzero. Let $y:=(y_1,y_2,\hdots, y_{n+1},0,\hdots)$. Set $y':=(y_1,y_2,\hdots, y_n,0,0\hdots)$ and $y''=(0,\hdots, 0, y_{n+1}, 0,\hdots )$. If $y'$ is zero, then we are finished by the base case. Suppose $y'\neq 0$; then by the induction hypothesis there is a homogeneous $r\in R^{(n)}$ such that $ry'$ is nonzero in $M^{(n)}$. If $ry_{n+1}=0$, then we have $ry=ry'$ is nonzero in $M^{(n)}$. On the other hand, if $ry_{n+1}\neq 0$, then we may write $ry=ry'+ry''$ with $ry'\in M^{(n)}$ nonzero. By the base case, there is a homogeneous $r'\in R^{(n)}$ such that $r'ry''$ is nonzero in $M^{(n)}$. Thus, $r'ry=r'ry'+r'y''$ is nonzero in $M^{(n)}$. By induction, we conclude  

 \[
M^{(n)} \subseteq (\text{*}E_R(M))^{(n)}
\]

\noindent is *essential.
\end{proof}

\begin{cor}\label{EUC1}
Fix $n\in \N$, and let $R$ be a positively graded $k$-algebra generated by finitely many elements of degree coprime to $n$. Assume that $M$ is a graded $R$-module such that $\Ass_R(M)\subseteq \Proj(R)$. Then

\[
\text{*}E_{R^{(n)}}(M^{(n)}) =  (\text*E_R(M))^{(n)}.
\]

\end{cor}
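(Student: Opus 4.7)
The plan is to combine the two main results just established, Corollary \ref{C2} (which lifts *injectivity through the Veronese functor) and Lemma \ref{KL} (which lifts *essentiality), and then invoke the uniqueness of the *injective hull coming from Theorem \ref{*iso}. Because the hard work has already been done in Lemma \ref{KL} and Corollary \ref{C2}, this final assembly is essentially bookkeeping.

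First, I would check that the hypothesis of Corollary \ref{C2} transfers from $M$ to its *injective hull. Since $M\subseteq \text{*}E_R(M)$ is a *essential extension by definition, Lemma \ref{*ass} yields $\Ass_R(\text{*}E_R(M))=\Ass_R(M)\subseteq \Proj(R)$ (using that associated primes of a $\Z$-graded module are graded, so $\Ass_R$ and $\text{*}\Ass$ coincide here). Corollary \ref{C2} then shows that $(\text{*}E_R(M))^{(n)}$ is a *injective $R^{(n)}$-module. On the other hand, Lemma \ref{KL} applies directly to $M$ and produces that $M^{(n)}\subseteq (\text{*}E_R(M))^{(n)}$ is a *essential extension of graded $R^{(n)}$-modules.

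At this point $(\text{*}E_R(M))^{(n)}$ is a *injective *essential extension of $M^{(n)}$ over $R^{(n)}$, and it remains only to identify it with $\text{*}E_{R^{(n)}}(M^{(n)})$. The standard argument is: by *injectivity of $\text{*}E_{R^{(n)}}(M^{(n)})$ the inclusion $M^{(n)}\hookrightarrow \text{*}E_{R^{(n)}}(M^{(n)})$ extends to a graded map $\varphi\colon (\text{*}E_R(M))^{(n)}\to \text{*}E_{R^{(n)}}(M^{(n)})$. The *essentiality of $M^{(n)}$ in the domain forces $\ker\varphi\cap M^{(n)}=0$ to imply $\ker\varphi=0$, so $\varphi$ is injective. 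The image $\varphi((\text{*}E_R(M))^{(n)})$ is a *injective submodule of $\text{*}E_{R^{(n)}}(M^{(n)})$ containing $M^{(n)}$, hence splits off as a direct summand; the complementary summand has zero intersection with $M^{(n)}$, and so must vanish by the *essentiality of $M^{(n)}$ in $\text{*}E_{R^{(n)}}(M^{(n)})$ (which is guaranteed by Theorem \ref{*iso}). Thus $\varphi$ is an isomorphism, giving the desired identification. The only subtle point worth watching is the preservation of the associated-prime hypothesis in the first paragraph; everything else is formal.
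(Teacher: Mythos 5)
Your proof is correct and follows the same route as the paper, which likewise deduces Corollary \ref{EUC1} from Lemma \ref{KL} and Corollary \ref{C2}; you have simply made explicit the standard argument (left implicit in the paper) that a *injective *essential extension of $M^{(n)}$ must coincide with $\text{*}E_{R^{(n)}}(M^{(n)})$, together with the observation via Lemma \ref{*ass} that the hypothesis on associated primes transfers to $\text{*}E_R(M)$. One small imprecision: the *essentiality of $M^{(n)}$ in $\text{*}E_{R^{(n)}}(M^{(n)})$ is part of the definition of the *injective hull as a maximal *essential extension, not something supplied by Theorem \ref{*iso}, which instead gives uniqueness of such extensions.
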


\begin{proof}
This is an immediate consequence of Lemma \ref{KL} and Corollary \ref{C2}.
\end{proof}

By definition the degrees of the generators of a standard graded, finitely generated $k$-algebra $R$ are all $1$. Hence, their degrees are coprime to $n\in\N$. We single out this case.

\begin{cor}\label{EUC2}
Fix $n\in \N$, and let $R$ be a standard graded, finitely generated algebra over a field. If $M$ is a graded $R$-module such that $\Ass_R(M)\subseteq \Proj(R)$, then

\[
\text{*}E_{R^{(n)}}(M^{(n)}) =  (\text*E_R(M))^{(n)}.
\]

\end{cor}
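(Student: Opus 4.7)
The plan is to observe that Corollary \ref{EUC2} is simply the specialization of Corollary \ref{EUC1} to the standard graded case, and to verify that the hypotheses line up. Specifically, in Corollary \ref{EUC1} the ring $R$ is required to be positively graded and finitely generated over $k$ by elements whose degrees are coprime to $n$; the hypothesis on $M$ is identical in both statements.

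First, I would unpack what "standard graded, finitely generated algebra over a field" means: $R$ is generated as a $k$-algebra by finitely many elements of degree $1$, and it is positively graded. Since $\gcd(1,n) = 1$ for every $n \in \N$, the degrees of a generating set all satisfy the coprimality condition demanded by Corollary \ref{EUC1}. Thus the hypotheses of Corollary \ref{EUC1} are satisfied verbatim.

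With this verification in hand, the conclusion $\text{*}E_{R^{(n)}}(M^{(n)}) = (\text{*}E_R(M))^{(n)}$ follows immediately by invoking Corollary \ref{EUC1}. There is no genuine obstacle here; the only point worth being careful about is confirming the definition of "standard graded" in use (that generators have degree $1$, not merely that $R$ is generated in a single degree), so that coprimality with every $n$ is automatic. The entire proof therefore reduces to a single application of Corollary \ref{EUC1}, exactly as the paper signals by calling this the standard-graded specialization.
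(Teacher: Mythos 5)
Your proposal is correct and matches the paper exactly: the paper introduces Corollary~\ref{EUC2} with the remark that the generators of a standard graded algebra all have degree $1$, hence are coprime to every $n\in\N$, so Corollary~\ref{EUC1} applies directly. No separate argument is given or needed.
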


We will find use for the following Proposition in Section \ref{sec5}.

\begin{prop}\label{vup}
Fix $n\in \N$, and let $R$ be a positively graded $k$-algebra generated by finitely many elements of degree coprime to $n$. If $\p\in \Proj(R)$, then 

\[
\text{*}\mu(0,\q\cap R^{(n)},(R/\p(-k))^{(n)})=\begin{cases}
0, & \q\neq \p\\
1, & \q=\p.
\end{cases}
\]
\end{prop}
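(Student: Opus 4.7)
The plan is to use Corollary \ref{EUC1} to identify
\[
\text{*}E_{R^{(n)}}((R/\p(-k))^{(n)}) \cong (\text{*}E_R(R/\p)(-k))^{(n)},
\]
and then reduce the $0$-th *Bass number computation to an associated-primes statement. A routine degree-by-degree identification gives $(R/\p)^{(n)} = R^{(n)}/(\p \cap R^{(n)})$ as graded $R^{(n)}$-modules, so $(R/\p(-k))^{(n)}$ is annihilated by $\p \cap R^{(n)}$ and is torsion-free over the graded domain $R^{(n)}/(\p \cap R^{(n)})$; in particular its unique $R^{(n)}$-associated prime is $\p \cap R^{(n)}$, and by Lemma \ref{*ass} the same holds for $(\text{*}E_R(R/\p)(-k))^{(n)}$.

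For the case $\q \ne \p$: since the $0$-th *Bass number is nonzero precisely at the associated primes (by Lemma \ref{*ass} applied to a *injective hull whose decomposition comes from Theorem \ref{*structure}), it suffices to show $\q \cap R^{(n)} \ne \p \cap R^{(n)}$. I would use that every homogeneous $r \in R$ satisfies $r^n \in R^{(n)}$, so that $R$ is integral over $R^{(n)}$: if $\q \cap R^{(n)} = \p \cap R^{(n)}$, then any homogeneous $x \in \p$ has $x^n \in \q$, whence $x \in \q$, and symmetry forces $\p = \q$.

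For $\q = \p$: by Proposition \ref{graded bass numbers}, the *Bass number equals the $\kappa$-rank of $(R/\p(-k))^{(n)}_{(\p \cap R^{(n)})}$, where $\kappa = R^{(n)}_{(\p \cap R^{(n)})}/(\p \cap R^{(n)})R^{(n)}_{(\p \cap R^{(n)})}$. Using a homogeneous generator $x \in R \setminus \p$ of degree coprime to $n$ (available because $\p \in \Proj(R)$), appropriate powers of $x$ let one clear denominators and give
\[
(R/\p(-k))^{(n)}_{(\p \cap R^{(n)})} = ((R/\p)_{(\p)}(-k))^{(n)}.
\]
By Proposition \ref{*local structure} write $(R/\p)_{(\p)} = k'[t,t^{-1}]$ with $d = \deg(t)$; since the image of $x$ is a homogeneous unit, $d \mid \deg(x)$, so $\gcd(d,n) = 1$. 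A Chinese remainder argument then shows that $((R/\p)_{(\p)}(-k))^{(n)}$ is concentrated in a single coset of $dn\Z$ with $k'$-dimension one in each occupied degree, hence is a rank-one graded module over $k'[t^n,t^{-n}] = \kappa$, yielding *Bass number $1$.

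The principal obstacle is this final rank-one verification: keeping the external twist by $-k$ and the $n$-th Veronese under simultaneous control. The coprimality $\gcd(d,n) = 1$ is precisely what lets the Chinese remainder theorem collapse the Veronese to a single coset of $dn\Z$, which is what preserves rank one.
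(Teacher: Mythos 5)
Your proof is correct, but it takes a genuinely different route from the paper. The paper's proof is short and works in the ungraded world: after noting $\text{*}\mu(0,-,-)=\mu(0,-,-)$ via Proposition \ref{graded bass numbers}, it exhibits an explicit essential extension $R^{(n)}/(\p\cap R^{(n)})\hookrightarrow (R/\p(-k))^{(n)}$ by sending $1$ to a nonzero element $y_1$ of minimal degree and checking that for any homogeneous $h$ the cyclic modules $[R^{(n)}/(\p\cap R^{(n)})]y_1$ and $[R^{(n)}/(\p\cap R^{(n)})]h$ meet in $y_1h^n = h\cdot(y_1h^{n-1})$; indecomposability of the resulting injective hull then gives all the Bass numbers at once. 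You instead split into two cases and stay largely in the graded category: for $\q\neq\p$ you use that the $0$-th Bass number detects associated primes together with the integrality of $R$ over $R^{(n)}$ to distinguish contractions of distinct graded primes, and for $\q=\p$ you localize, invoke the $H$-simple classification $R_{(\p)}/\p R_{(\p)}=k'[t,t^{-1}]$ from Proposition \ref{*local structure}, and compute the rank of $(k'[t,t^{-1}](-k))^{(n)}$ over $k'[t^n,t^{-n}]$ directly. Both routes are valid. The paper's is more elementary and self-contained, while yours leans on the $H$-simple machinery developed in Section 2.4 and has the virtue of making visible exactly where the coprimality of $\deg(t)$ with $n$ is used — to force the Veronese of the shifted $k'[t,t^{-1}]$ onto a single residue class modulo $n\deg(t)$ and hence to stay rank one. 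One small remark: your opening invocation of Corollary \ref{EUC1} and Lemma \ref{*ass} is not actually used in the argument that follows and could be dropped.
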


\begin{proof}
 Since $\text{*}\mu(0, \q\cap R^{(n)},(R/\p(-k))^{(n)})=\mu(0, \q\cap R^{(n)},(R/\p(-k))^{(n)})$, to show the desired equality it suffices to show there is an essential (not necessarily graded) extension $R^{(n)}/(\p\cap R^{(n)})\to (R/\p(-k))^{(n)}$. We set 
 \[
 j=\min\{i~|~[(R/\p(-k))^{(n)}]_i\neq 0\}.
\] 
The set of monomials in $[(R/\p(-k))^{(n)}]_j=\{y_1,\hdots,y_m\}$ generate $R/\p(-k)^{(n)}$ as a (ungraded) $R^{(n)}$-module  Consider the injection $R^{(n)}/\q \cap R^{(n)}\to (R/\p(-k))^{(n)}$ defined by sending $1$ to $y_1$. To show that this is essential, it suffices to show that 
 
 \[
 \left[R^{(n)}/(\p\cap R^{(n)})\right]y_1 \cap \left[R^{(n)}/(\p\cap R^{(n)})\right]h\neq 0
 \]

 \
 
 \noindent for all nonzero homogeneous $h\in (R/\p(-k))^{(n)}$. Consider, $y_1\cdot h^n=h\cdot(y_1\cdot h^{n-1})$. Since $y_1\cdot h^{n-1}$ is an element of $R^{(n)}/(\p\cap R^{(n)})$, we attain the desired statement.
\end{proof}
\end{subsection}
\end{section}

%%%%%%%%%%%%%%%%%%%%%%%%%%%%%%%%%%%%%%

\begin{section}{Graded Matlis Duality}\label{sec4}
In this section we investigate how the graded Matlis Functor can be used to turn minimal *free resolutions into minimal *injective resolutions under certain circumstances. This will allow us to understand the graded bass numbers of a graded module over the homogeneous maximal ideal of a *local ring in terms of its graded Betti numbers. To begin, we recall what the graded Matlis functor is; a nice treatment of this can be found in \cite{BH}. 

\begin{definition}
Let $R$ be a *local ring with homogeneous maximal ideal $\m$. Set $R_0=[R]_0$ and $\m_0=\mathfrak{m}\cap R_0$. The graded Matlis functor from graded $R$-modules to graded $R$-modules is defined to be

\[
(\--)^{\vee}:=\underline{\Hom}_R(\--,~ E_{R_0}(R_0/\m_0)).
\]

\noindent Note that $(\--)^{\vee}$ is an exact additive functor.
\end{definition}

\begin{definition}
Let $R$ be a *local ring with homogeneous maximal ideal $\m$. We define the following functor from the category of graded $R$-modules to the category of graded $R$-modules by

\[
(\--)^\checkgr:=\SHom_R(\--, \text{*}E_R(R/\m)).
\]

\noindent Note that $(\--)^\checkgr$ is an exact additive functor.
\end{definition}

\begin{definition}
We say that a *local ring is a *complete *local ring if $(R_0,\m_0)$ is a complete local ring. 
\end{definition}

In general $(M)^\vee$ and $(M)^\checkgr$ are not isomorphic as graded $R$-modules; however, if $(R,\m)$ is *complete, then they are:

\begin{thm}[{\cite[Proposition 3.3.16, Theorem 3.6.17]{BH}}]\label{Matlis Duality}
Let $(R,\m)$ be a *complete *local ring. Suppose $M$ is a *Noetherian $R$-module and $N$ is a *Artinian $R$-module. Then

\begin{enumerate}
\item $(M)^\vee \cong (M)^\checkgr$ as graded $R$-modules for all graded $R$-modules $M$;
\item $M^\vee$ is *Artinian;
\item $N^\vee$ is *Noetherian;
\item $M^{\vee\vee}\cong M$ and $N^{\vee\vee}\cong N$.
\end{enumerate}
\end{thm}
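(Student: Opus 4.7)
The plan is to adapt the classical proof of Matlis duality to the graded setting, carrying the internal grading throughout. The key preliminary step is to identify $\text{*}E_R(R/\m)$ with a concrete graded module built from the ordinary injective hull $E_{R_0}(R_0/\m_0)$. Specifically, I would show that
\[
\text{*}E_R(R/\m) \cong \underline{\Hom}_{R_0}(R, E_{R_0}(R_0/\m_0)),
\]
where the right side is graded so that its degree-$n$ piece is $\Hom_{R_0}(R_{-n}, E_{R_0}(R_0/\m_0))$. To establish this isomorphism, I would verify *injectivity of the right-hand side via the graded Baer criterion (Theorem \ref{gbc}) and exhibit the inclusion of $R/\m$, sending $1$ to the composite $R \twoheadrightarrow R_0 \twoheadrightarrow R_0/\m_0 \hookrightarrow E_{R_0}(R_0/\m_0)$, as a *essential extension; Theorem \ref{*iso} then finishes the identification.

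Once this model is in hand, part (1) follows from the graded Hom-tensor adjunction:
\[
\SHom_R(M, \underline{\Hom}_{R_0}(R, E_{R_0}(R_0/\m_0))) \cong \underline{\Hom}_{R_0}(M, E_{R_0}(R_0/\m_0)) = (M)^{\vee}.
\]
For part (2), I would take a *free presentation $F_1 \to F_0 \to M \to 0$ with each $F_i$ a finite direct sum of shifts of $R$. Exactness of $(-)^{\vee}$ gives $0 \to M^\vee \to F_0^\vee \to F_1^\vee$, and each $F_i^\vee$ is a finite direct sum of shifts of $R^\vee \cong \text{*}E_R(R/\m)$, which is *Artinian (its socle is $R/\m$, sitting *essentially inside). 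Since *submodules of *Artinian modules are *Artinian, $M^\vee$ is *Artinian. Part (3) is the dual argument: a *Artinian $N$ has a finite-length graded socle, giving an *essential embedding into a finite direct sum of shifts of $\text{*}E_R(R/\m)$, which extends to a *injective copresentation $0 \to N \to I^0 \to I^1$ with each $I^j$ a finite direct sum of shifts of $\text{*}E_R(R/\m)$; dualizing realizes $N^\vee$ as the cokernel of a map between finitely generated graded free $R$-modules, hence *Noetherian.

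For part (4), the crucial calculation is that the evaluation map $R \to R^{\vee\vee}$ is an isomorphism. Unpacking degrees, the degree-$n$ component of $R^{\vee\vee}$ reduces, via the model of $R^\vee$ above, to $\Hom_{R_0}(\Hom_{R_0}(R_n, E_{R_0}(R_0/\m_0)), E_{R_0}(R_0/\m_0))$, which equals $R_n$ by the classical Matlis biduality for the complete Noetherian local ring $(R_0, \m_0)$ applied to the finitely generated $R_0$-module $R_n$. (Here is where *completeness enters essentially.) Additivity extends reflexivity to any finitely generated graded free module. For arbitrary *Noetherian $M$, the five lemma applied to the biduality comparison between $F_1 \to F_0 \to M \to 0$ and $F_1^{\vee\vee} \to F_0^{\vee\vee} \to M^{\vee\vee} \to 0$ yields $M \cong M^{\vee\vee}$. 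For *Artinian $N$, the dual five-lemma argument using an *injective copresentation together with parts (2) and (3) gives $N \cong N^{\vee\vee}$.

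The main obstacle is the first step: pinning down the explicit formula for $\text{*}E_R(R/\m)$ (in particular verifying *essentiality of the socle inclusion degree by degree) and then establishing biduality for $R$ itself. The finite-rank $R_n$ being $R_0$-reflexive is where *completeness of $R$ is actually used; everything afterwards is formal.
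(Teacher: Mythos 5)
The paper itself gives no proof of this result; it is taken verbatim (with references) from Bruns--Herzog \cite{BH}, so there is no ``paper's proof'' to compare against. Your outline matches the general strategy of the proof one finds in that reference and similar treatments: identify the graded injective hull of the residue field with $\SHom_{R_0}(R, E_{R_0}(R_0/\m_0))$ graded appropriately, deduce (1) by Hom-tensor adjunction, obtain (2) and (3) from *free presentations and *injective copresentations, and reduce (4) to classical Matlis biduality over the complete local ring $(R_0,\m_0)$ applied degree by degree, followed by a five-lemma bootstrap.

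There is, however, a genuine ordering gap. In your argument for (2) you invoke that $\text{*}E_R(R/\m) \cong R^\vee$ is *Artinian, supported only by the parenthetical remark that its socle is $R/\m$ sitting *essentially inside. That observation does not establish *Artinian-ness: a graded module can be an essential extension of a simple graded module without being *Artinian unless one already knows further finiteness. In the standard development this fact is itself a \emph{consequence} of biduality: one first shows $R^{\vee\vee}\cong R$ by the degree-wise computation you sketch at the beginning of (4), and then uses exactness and faithfulness of $(-)^\vee$ to transport descending chains in $R^\vee$ to ascending chains in the *Noetherian module $R$, deducing that $R^\vee$ is *Artinian. Alternatively one can attempt a direct chain-stabilization argument using that each graded piece $\left[\text{*}E_R(R/\m)\right]_n \cong \Hom_{R_0}(R_{-n}, E_{R_0}(R_0/\m_0))$ is an Artinian $R_0$-module, but some input beyond socle essentiality is still required. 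As written, step (2) relies on a fact justified only inside step (4), so the proof needs to be reordered: establish $\text{*}E_R(R/\m) \cong \SHom_{R_0}(R, E_{R_0}(R_0/\m_0))$ and $R^{\vee\vee}\cong R$ first, conclude that $R^\vee$ is *Artinian, and only then carry out (2), (3), and the general case of (4) by the presentation/copresentation arguments you describe.
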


For what is to follow, we utilize a graded version of Ext-Tor duality

\begin{lem}\label{graded duality}
Let $R$ be a *Noetherian graded ring and $E$ any *injective $R$-module. Set 

\[
F(\--):=\SHom_R(\--,E). 
\]

\noindent Then 

\begin{enumerate}
\item For all graded $R$-modules $M$ and $N$, we have $F(\uTor_R^i(M,N))\cong \uExt_i^R(M,F(N))$.
\item For all graded $R$-modules $M$ and $N$ with $M$ *Noetherian, we have an isomorphism of graded $R$-modules $F(\uExt_i^R(M,N))\cong \uTor_R^i(M,F(N))$.
\end{enumerate}
\end{lem}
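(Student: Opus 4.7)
The plan is to prove both statements by taking a graded free resolution $P_\bullet \to M$ and running the standard computation of derived functors, then applying the exact contravariant functor $F = \SHom_R(-,E)$ and using the appropriate adjunction. The first ingredient is that $E$ is *injective, so $F$ is exact on *$R$-mod; hence for any chain complex $C_\bullet$ or cochain complex $C^\bullet$ of graded $R$-modules we obtain natural graded isomorphisms $H^i(F(C_\bullet)) \cong F(H_i(C_\bullet))$ and $H_i(F(C^\bullet)) \cong F(H^i(C^\bullet))$. A graded free resolution $P_\bullet \to M$ exists because *$R$-mod has enough graded projectives (shifts of $R$).

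For part (1), I would apply $F$ to the chain complex $P_\bullet \otimes_R N$ whose $i$-th homology is $\uTor_i^R(M,N)$, and invoke the graded tensor-hom adjunction levelwise:
\[
\SHom_R(P_i \otimes_R N,\, E) \cong \SHom_R\bigl(P_i,\, \SHom_R(N,E)\bigr) = \SHom_R(P_i, F(N)).
\]
This isomorphism is just the usual currying applied in *$R$-mod, and it respects the $\Z$-grading by construction. Assembling it across degrees produces a natural isomorphism of cochain complexes $F(P_\bullet \otimes_R N) \cong \SHom_R(P_\bullet, F(N))$. Taking $H^i$ of the left side gives $F(\uTor_i^R(M,N))$ (by exactness of $F$), and $H^i$ of the right side is $\uExt_R^i(M, F(N))$ by definition.

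For part (2), the *Noetherian hypothesis allows me to choose each $P_i$ to be a \emph{finite} direct sum $\bigoplus_j R(-k_{ij})$. The key identity is now
\[
F\bigl(\SHom_R(R(-k), N)\bigr) \cong R(-k) \otimes_R F(N),
\]
which follows from $\SHom_R(R(-k),N) \cong N(k)$ together with $F(N(k)) \cong F(N)(-k) \cong R(-k) \otimes_R F(N)$, i.e.\ from the compatibility of $F$ with shifts. Because $F$, $\SHom_R(-,N)$, and $(-)\otimes_R F(N)$ all send finite direct sums to finite direct sums, the identity extends naturally to each $P_i$, and compatibility of these isomorphisms with the differentials yields a chain-complex isomorphism $F(\SHom_R(P_\bullet, N)) \cong P_\bullet \otimes_R F(N)$. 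Taking $H_i$ gives $F(\uExt_R^i(M,N))$ on the left and $\uTor_i^R(M, F(N))$ on the right.

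The main obstacle is purely bookkeeping: one must check that the tensor-hom adjunction and the identification $F(\SHom_R(R(-k),N)) \cong R(-k) \otimes_R F(N)$ are isomorphisms in the \emph{graded} category, not just as ungraded modules, which requires tracking shifts carefully. This is also where the *Noetherian hypothesis in (2) is essential: the identity $F(\SHom_R(-,N)) \cong (-)\otimes_R F(N)$ holds only for finitely generated free graded modules, since $\SHom_R$ does not turn infinite direct sums in its first argument into products that $F$ then converts back to direct sums compatibly. Part (1) needs no such hypothesis because the tensor-hom adjunction used there holds in full generality.
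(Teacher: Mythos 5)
Your proof is correct and follows the same route the paper has in mind: the paper's entire proof is the one-line remark that the result ``follows as in the ungraded case,'' and your argument is exactly the standard ungraded Ext--Tor duality proof transplanted into $*R$-mod with the shifts tracked. For part (1), exactness of $F=\SHom_R(-,E)$ plus the graded tensor-hom adjunction applied levelwise to a graded free resolution gives the cochain isomorphism $F(P_\bullet\otimes_R N)\cong \SHom_R(P_\bullet,F(N))$, and the two cohomologies are the two sides of the claim. For part (2), the *Noetherian hypothesis lets you take each $P_i$ a finite direct sum of shifts $R(-k_{ij})$, and the identity $F(\SHom_R(R(-k),N))\cong R(-k)\otimes_R F(N)$ extends over finite sums to $F(\SHom_R(P_\bullet,N))\cong P_\bullet\otimes_R F(N)$. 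One small point worth making explicit: the ``compatibility with the differentials'' you appeal to is the naturality of the evaluation map $P\otimes_R F(N)\to F(\SHom_R(P,N))$, $p\otimes\phi\mapsto(g\mapsto\phi(g(p)))$, which is an isomorphism precisely when $P$ is finitely generated free; phrasing the argument through that natural transformation, rather than termwise via $R(-k)$, makes the chain-map claim immediate. You also correctly identify why the finiteness hypothesis appears only in (2).
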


\begin{proof}
This follows as in the ungraded case.
\end{proof}

We now prove the main theorem of this section.

\begin{thm}\label{gt}
Let $(R,\m)$ be *local ring with $R/\m=k$ a field. Suppose $M$ is a *Noetherian $R$-module with minimal *free resolution $F_\bullet$. Then $(F_\bullet)^\checkgr$ is a minimal *injective resolution of $M^\checkgr$. In particular, $\beta(i,M,-z)=\text{*}\mu(i,\m,z,M^\checkgr)$ for all $i$ and all $z\in \Z$, where $\beta(i,M,-z)$ is the graded $(i,-z)$-th Betti number for $M$.
\end{thm}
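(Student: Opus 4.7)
The plan is to apply the graded Matlis functor $(-)^\checkgr$ directly to the minimal *free resolution $F_\bullet$ of $M$ and show that the resulting complex is a minimal *injective resolution of $M^\checkgr$. Since $M$ is *Noetherian, each term decomposes as a finite direct sum $F_i = \bigoplus_a R(-a)^{\beta(i,M,a)}$, and the graded identification $\SHom_R(R(-a), N) \cong N(a)$ yields
\[
(F_i)^\checkgr \cong \bigoplus_a {*}E_R(R/\m)(a)^{\beta(i,M,a)},
\]
a finite direct sum of shifts of the indecomposable *injective ${*}E_R(R/\m)$, hence itself *injective. Because ${*}E_R(R/\m)$ is *injective, the functor $(-)^\checkgr$ is exact, so applying it to $F_\bullet \to M \to 0$ produces an exact sequence $0 \to M^\checkgr \to (F_\bullet)^\checkgr$, i.e., a *injective resolution of $M^\checkgr$.

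To verify minimality I would invoke Corollary \ref{classification of minimal}. First, every homogeneous element of ${*}E_R(R/\m)$ generates a submodule whose only associated prime is $\m$ (by Lemma \ref{*ass}), and hence is annihilated by some power of $\m$; so each $(F_i)^\checkgr$ is $\m$-torsion. In particular, $((F_i)^\checkgr)_{(\p)} = 0$ for every graded prime $\p \neq \m$, which immediately gives both the finiteness of *Bass numbers at such $\p$ (they vanish) and the vanishing of the differential of $\SHom_R(R/\p, (F_\bullet)^\checkgr)_{(\p)}$. For $\p = \m$, the graded Hom-tensor adjunction yields
\[
\SHom_R(R/\m, (F_i)^\checkgr) \cong \SHom_R(F_i \otimes_R R/\m, {*}E_R(R/\m)),
\]
and since $F_\bullet$ is a minimal *free resolution, the differentials of $F_\bullet \otimes_R R/\m$ vanish, so the duals vanish as well; localizing at $\m$ preserves this. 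Finiteness of *Bass numbers at $\m$ follows a posteriori from the finiteness of each $\beta(i,M,a)$.

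With minimality established, I read off the decomposition $(F_i)^\checkgr \cong \bigoplus_a {*}E_R(R/\m)(a)^{\beta(i,M,a)}$ and reindex via $a = -z$ to obtain $\text{*}\mu(i, \m, z, M^\checkgr) = \beta(i, M, -z)$. The hypothesis $R/\m = k$ ensures via Proposition \ref{*local structure} that $R_{(\m)}/\m R_{(\m)}$ is a field, so by Proposition \ref{2.3P1} distinct shifts ${*}E_R(R/\m)(-z)$ are pairwise non-isomorphic, making the count of each shift a well-defined invariant. The principal technical hurdle is bookkeeping with the graded shift conventions and confirming Hom-tensor adjunction in the *graded setting; both are routine given the preliminaries, so I do not anticipate a substantive obstacle.
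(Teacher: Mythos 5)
Your proposal follows essentially the same route as the paper's proof: apply the exact functor $(-)^\checkgr$ to the augmented resolution $F_\bullet \to M$ to obtain a *injective resolution of $M^\checkgr$, observe each term is $\m$-torsion so the localized Hom complex vanishes at all graded primes $\p \neq \m$, use Hom-tensor adjunction with minimality of $F_\bullet$ to kill the differential at $\m$, and invoke Corollary \ref{classification of minimal}. The only substantive addition in your write-up is the explicit identification $(F_i)^\checkgr \cong \bigoplus_a {*}E_R(R/\m)(a)^{\beta(i,M,a)}$ at the outset (which the paper leaves implicit until the final Bass-number comparison) together with the remark that under $R/\m = k$ distinct shifts of ${*}E_R(R/\m)$ are non-isomorphic, making the count well-posed; these are worthwhile clarifications but not a different argument.
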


\begin{proof}
Considered the augmented complex

\[
 F_\bullet\to M:= \cdots \to \bigoplus_{z\in \Z} R(z)^{\oplus \beta_{1,M,-z}}\to \bigoplus_{z\in \Z} R(z)^{\oplus\beta_{0,M,-z}}\to M \to 0,
\]

\

\noindent which is exact. As $(\--)^\checkgr$ is an exact functor, the complex

\[
(M\to F_{\bullet})^\checkgr= M^\checkgr\to \bigoplus_{z\in \Z} E_R(k)(-z)^{\oplus\beta_{0,M,-z}} \to \bigoplus_{z\in \Z} R(-z)^{\oplus\beta_{1,M,-z}} \to \cdots
\]

\

\noindent is exact. Therefore, $(F_\bullet)^\checkgr$ is a *injective resolution of $M^\checkgr$. Set $E^\bullet:=(F_\bullet)^\checkgr$. We observe, $E^i$ is $\m$- torsion for all $i$. Thus, the complex $\underline{\Hom}_{R_{\p^*}}\left(~R_{(\p)}/\p\ R_{(\p)},~\left(E^\bullet\right)_{\p^*}\right)$ is 0 for all $\p\in \text{*}\Spec(R)\setminus \{\m\}$. Hence, the differential of $\underline{\Hom}_{R_{\p^*}}\left(~R_{(\p)}/\p\ R_{(\p)},~\left(E^\bullet\right)_{\p^*}\right)$ is 0 for all $\p\in \text{*}\Spec(R)\setminus \{\m\}$.

Next, since $F_\bullet$ is a minimal *free resolution of $M$, $R/\m \otimes F_\bullet$ has zero differential. Using Hom-Tensor adjunction for chain complexes, we see that $\Hom_R(R/\m, E^\bullet)$ has zero differential. Since all *Bass numbers of $M^\checkgr$ are finite, Theorem \ref{classification of minimal} implies,  that $(F_\bullet)^\vee$ is a minimal *injective resolution of $M^\vee$, as desired.
\end{proof}

The following example demonstrates how Theorem \ref{gt} can be used.

\begin{ex}\label{ex1}

Let $k$ be a field and set $R=k[x,y]$ with the standard grading. Using Macaulay2 \cite{M2},  one can show that  $R^{(3)}$ is a Golod ring. This implies that its Poincaré series is

\[
P_k(t)=\frac{(1+t)^4}{1-(3t^2+2t^3)}=1+4t+\sum_{k=2}^\infty 9\cdot 2^{k-2}t^k.
\]

\

\noindent In particular, the Betti numbers of $k$ over $R^{(3)}$ are $\beta_0=1$, $\beta_1=4$, and $\beta_n=9\cdot 2^{n-2}$, for $n\geq 2$. Using Theorem \ref{gt},  we attain

\[
\mu( \m \cap R^{(3)}, i, R^{(3)})=\begin{cases}
1 & i=0 \\
4 & i=1\\
9\cdot 2^{2-i} & i\geq 2
\end{cases},
\]
\noindent where $\m$ is the homogeneous maximal ideal of $R$.
\end{ex}

If $(R,\m)$ is *local with $R/\m$ a field then $(R,\m)$ is *complete. So, Theorem \ref{Matlis Duality} and Theorem \ref{gt} yield:

\begin{cor}\label{gc}
Let $(R,\m)$ be *local ring with $R/\m=k$ a field. If $M$ is a *Artinian, then all *Bass numbers of $M$ are finite.
\end{cor}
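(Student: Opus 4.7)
The strategy is to dualize $M$ via graded Matlis duality, thereby converting a question about *Bass numbers of a *Artinian module into a question about graded Betti numbers of a *Noetherian module, and then to invoke Theorem~\ref{gt}. Since $(R,\m)$ is *local with $R/\m$ a field, the sentence preceding the corollary asserts $(R,\m)$ is *complete, so the hypotheses of Theorem~\ref{Matlis Duality} are in force.

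Set $N := M^\vee$. Because $M$ is *Artinian, Theorem~\ref{Matlis Duality}(3) shows $N$ is *Noetherian, while parts (1) and (4) combine to give $M \cong M^{\vee\vee} = N^\vee \cong N^\checkgr$. Choose a minimal *free resolution $F_\bullet \to N$; as $R$ is *Noetherian and $N$ is finitely generated, each $F_i$ is a finite direct sum of shifts of $R$, so the graded Betti numbers $\beta(i,N,-z)$ are finite for each pair $(i,z)$ and $\sum_z \beta(i,N,-z) < \infty$ for every $i$. Theorem~\ref{gt} then identifies $(F_\bullet)^\checkgr$ as a minimal *injective resolution of $N^\checkgr \cong M$ and yields the equality $\beta(i,N,-z) = \text{*}\mu(i,\m,z,M)$. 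Summing over $z$ gives $\text{*}\mu(i,\m,M) < \infty$ for each $i$.

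It remains to handle primes $\p \in \text{*}\Spec(R) \setminus \{\m\}$. By construction each term of the minimal *injective resolution $(F_\bullet)^\checkgr$ of $M$ is a finite direct sum of shifts of $\text{*}E_R(R/\m)$ and contains no summand of the form $\text{*}E_R(R/\p)(-k)$ with $\p \neq \m$. By the structural decomposition of Theorem~\ref{*structure} (and the uniqueness recorded in Theorem~\ref{2.3T1}), this forces $\text{*}\mu(i,\p,M) = 0$ for every such $\p$ and every $i$, so all *Bass numbers of $M$ are finite. The main potential subtlety is that Theorem~\ref{gt} must produce a \emph{minimal} (not merely some) *injective resolution in order to read off the Bass numbers; but this is precisely the conclusion of that theorem, so no further work is required.
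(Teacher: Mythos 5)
Your proposal is correct and follows the same route the paper intends: the paper simply states that Theorem~\ref{Matlis Duality} and Theorem~\ref{gt} ``yield'' the corollary, and your argument is exactly the fleshed-out version of that citation. You correctly dualize $M$ to the *Noetherian module $N=M^\vee$, apply Theorem~\ref{gt} to $N$ to obtain $\beta(i,N,-z)=\text{*}\mu(i,\m,z,M)$, note finiteness of each $\sum_z\beta(i,N,-z)$ from finite generation of $N$ over the *Noetherian ring $R$, and supply the (implicit in the paper) observation that every term of $(F_\bullet)^{\checkgr}$ is a finite direct sum of shifts of $\text{*}E_R(R/\m)$, so that $\text{*}\mu(i,\p,M)=0$ for $\p\neq\m$ by the uniqueness in Theorem~\ref{2.3T1}.
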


The next couple of Lemmas will be used in Section \ref{sec5}; however, we include them here since Lemma \ref{fmt} is a direct consequence of Corollary \ref{gc}.

\begin{lem}\label{Artinian and n}
Let $R$ be any graded ring. If $M$ is a *Artinian graded $R$-module, then $M^{(n)}$ is a *Artinian *$R^{(n)}$-module. 
\end{lem}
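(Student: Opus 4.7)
My plan is to prove this directly from the definitions via descending chains, rather than appealing to duality.

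Given a descending chain of graded $R^{(n)}$-submodules
\[
N_1 \supseteq N_2 \supseteq N_3 \supseteq \cdots
\]
of $M^{(n)}$, I would associate to each $N_i$ the graded $R$-submodule $RN_i \subseteq M$ generated by $N_i$. Since $M$ is *Artinian, the descending chain $\{RN_i\}$ of graded $R$-submodules of $M$ stabilizes. The whole argument then reduces to the following key identity, which I would isolate as a short lemma:
\[
(RN)^{(n)} = N \quad \text{for every graded $R^{(n)}$-submodule $N \subseteq M^{(n)}$.}
\]
Once this is established, $RN_i = RN_j$ immediately implies $N_i = (RN_i)^{(n)} = (RN_j)^{(n)} = N_j$, and *Artinianness of $M^{(n)}$ follows.

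The inclusion $N \subseteq (RN)^{(n)}$ is immediate since $N \subseteq RN$ and $N \subseteq M^{(n)}$. For the reverse inclusion, I take a homogeneous element $y \in RN$ of degree $d$ with $n \mid d$ and write $y = \sum_{j,k} r_{j,k} x_j$ where each $x_j \in N$ is homogeneous and each $r_{j,k} \in R$ is homogeneous. Taking the degree-$d$ component forces $\deg(r_{j,k}) = d - \deg(x_j)$; since $n$ divides both $d$ and $\deg(x_j)$ (as $x_j \in M^{(n)}$), it also divides $\deg(r_{j,k})$, so $r_{j,k} \in R^{(n)}$ and hence $r_{j,k} x_j \in N$. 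Thus $y \in N$.

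I do not expect any significant obstacle. The only subtlety is bookkeeping the grading carefully so that, when we decompose a homogeneous element of $RN$ into products of homogeneous factors from $R$ and $N$, the coprimality/divisibility of degrees forces the scalar factors to lie in $R^{(n)}$. Everything else is a one-line application of the descending chain condition on $M$.
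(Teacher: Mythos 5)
Your proof is correct and takes essentially the same approach as the paper's: both associate to a descending chain $\{N_i\}$ of graded $R^{(n)}$-submodules the chain $\{RN_i\}$ of graded $R$-submodules, invoke *Artinianness of $M$, and transfer stabilization back via the degree-divisibility argument. Your isolation of the identity $(RN)^{(n)}=N$ as a standalone lemma is a slightly cleaner packaging of the same computation the paper carries out inline.
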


\begin{proof}
Let $N_0\supset N_1 \supset \cdots$ be a chain of graded $R^{(n)}$-submodules of $M^{(n)}$. For each natural number $i$, we defined $RN_i$ to be the graded $R$-submodule of $M$ generated by the elements of $N_i$. We expand to a chain of graded submodules in $M$:
\[
RN_0 \supseteq RN_1 \supseteq \cdots.
\]

Since $M$ is Artinian, there is a $t\in \N$ such that if $i>j\geq t$, then $RN_i=RN_j$. We show that if $i>j\geq t$, then $N_i=N_j$, as well. Suppose that $m\in N_j$ is homogeneous. Since $N_j\subseteq M^{(n)}$, we note that $\deg(m)$ is divisible by $n$. Since we have $RN_i=RN_j$, there are homogeneous $s_i\in R$ and $x_i\in RN_i$ such that $m=\sum s_ix_i$. Since $\deg(m)$ and $\deg(x_i)$ are divisible by $n$, we have that $\deg(s_i)$ is divisible by $n$. Therefore, it follows $s_i\in R^{(n)}$. Thus, $N_i=N_j$, as desired.
\end{proof}

\begin{lem}\label{fmt} Suppose that $(R,\m,k)$ is a *local ring. If $M$ is a *Artinian $R$-module, then $M^{(n)}$ has finite *Bass numbers over $R^{(n)}$.
\end{lem}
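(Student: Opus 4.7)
The plan is to simply combine Lemma \ref{Artinian and n} with Corollary \ref{gc}, after verifying that $R^{(n)}$ fits the hypotheses of Corollary \ref{gc}.

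First I would check that $(R^{(n)}, \m \cap R^{(n)})$ is a \text{*}local ring whose residue field is also $k$. For \text{*}locality, note that any graded maximal ideal $\q$ of $R^{(n)}$ is the contraction of some graded prime $\p$ of $R$, and since $\m$ is the unique graded maximal ideal of $R$ we have $\p \subseteq \m$, hence $\q \subseteq \m \cap R^{(n)}$; by maximality of $\q$, the ideal $\m \cap R^{(n)}$ is the unique graded maximal ideal of $R^{(n)}$. For the residue field, the inclusion $R^{(n)} \hookrightarrow R$ gives $R^{(n)}/(\m \cap R^{(n)}) \hookrightarrow R/\m = k$, and since $k = R/\m$ is concentrated in degree zero it arises from $R_0 \subseteq R^{(n)}$, so $R^{(n)}/(\m \cap R^{(n)}) = k$.

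Next, Lemma \ref{Artinian and n} (applied in the \text{*}local setting) gives that $M^{(n)}$ is a \text{*}Artinian $R^{(n)}$-module. Finally, since $(R^{(n)}, \m \cap R^{(n)}, k)$ is \text{*}local with residue field a field, Corollary \ref{gc} applied to the \text{*}Artinian $R^{(n)}$-module $M^{(n)}$ yields that all \text{*}Bass numbers of $M^{(n)}$ over $R^{(n)}$ are finite, which is exactly what we wanted.

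There is no real obstacle here: the substantive content has already been packaged into Lemma \ref{Artinian and n} (preservation of \text{*}Artinianness under the Veronese functor) and Corollary \ref{gc} (finiteness of \text{*}Bass numbers for \text{*}Artinian modules over a \text{*}local ring with field residue field). The only verification needed is the routine check that the Veronese subring of a \text{*}local ring is \text{*}local with the same degree-zero residue field, after which the result follows by direct quotation.
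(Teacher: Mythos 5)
Your proposal is correct and matches the paper's proof, which likewise derives the statement directly from Corollary \ref{gc} and Lemma \ref{Artinian and n}. The extra verification you include (that $R^{(n)}$ is *local with the same residue field $k$) is a routine hypothesis check that the paper leaves implicit, but it is a reasonable thing to spell out.
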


\begin{proof}
This is an immediate consequence of Corollary \ref{gc} and Lemma \ref{Artinian and n}.
\end{proof}

\end{section}

\begin{section}{Finiteness of *Bass Numbers of Veronese Submodules} \label{sec5}

For this section, $(R,\m, k)$ will always denote a *local ring with unique homogeneous maximal ideal $\m$. We set the convention that for a graded $(R,\m,k)$-module $M$ with minimal *injective resolution $(\text{*}E^\bullet_M, \del_M)$, where the $i$-th differential is the map $\del_M^i: \text{*}E_M^{i-1}\to \text{*}E_M^i$. Moreover, we set $E^{-1}_M=M$. In this section, we prove Theorem \ref{main theorem 2}, of which Theorem \ref{main theorem 1} is a corollary. For the proof of Theorem \ref{main theorem 2}, we will find it helpful to develop some convenient notation.

\begin{definition}\label{mtorchar}
Let $(R,\m, k)$ be a *local ring. The \textit{$\m$-torsion indicator function}, which we denote by $\bigchi_\m: \text{*$R$-Mod} \to \N$, is defined by
\[
\bigchi_\m\left(M\right)=\min\{i~|~ \Gamma_\m(E_M^i)\neq 0\}=\min\{i~|~\mu(\m,i,M)\neq0\}
\]

\noindent for each graded $R$-module $M$, with *$E^\bullet_M$ the minimal *injective resolution of $M$. Moreover, we set

\[
A(M)= \coker\left(\del_M^{\chi_\m(M)-1}\right).
\]
\end{definition}

Before proving Theorem \ref{main theorem 2}, we will prove the following lemma.

%\begin{defn}\label{sequences}
%Let $M$ be a graded $(R,\m,k)$-module with minimal *injective resolution $(\text{*}E^\bullet_M, \del_M)$. We define  $A(M)= \coker\left(\del_M^{\chi_\m(M)-1}\right)$

%Inductively, define $A_M(i)=\coker\left(\del_{C_M(i-1)}^{\chi_\m(C_M(i-1))-1}\right)$, $B(i)=\Gamma_\m(A(i))$, and $C_M(i)=\frac{A_M(i)}{B_M(i)}$.
%\end{defn}

\begin{lem}\label{clemma}
Let $M$ be a graded $(R,\m,k)$-module.

\begin{enumerate}
\item[(a)] $\text{*}\mu\left(\p,j,\frac{M}{\Gamma_\m(M)}\right)=\text{*}\mu(\p, j, M)$ for all $\p\in \Proj(R)$ and $j\in \N$. 
\item[(b)] $\text{*}\mu\left(\p\cap R^{(n)},j, \left(\frac{M}{\Gamma_\m(M)}\right)^{(n)}\right)=\text{*}\mu(\p\cap R^{(n)}, j, M^{(n)})$ for all $j\in \N$. 
\end{enumerate}
\end{lem}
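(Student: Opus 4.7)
The plan is to exploit the defining short exact sequence
\[
0 \to \Gamma_\m(M) \to M \to M/\Gamma_\m(M) \to 0,
\]
together with the fact that $\Gamma_\m(M)$ becomes zero after homogeneous localization at any graded prime $\p \neq \m$. For part (a), I would apply $\uExt^\bullet_R(R/\p, -)$ to this sequence to obtain a long exact sequence of graded Ext modules, then localize at $\p$. For any $\p \in \Proj(R)$, since $R$ is *local with *maximal ideal $\m$, we have $\p \subsetneq \m$, so I can pick a homogeneous $x \in \m \setminus \p$. Every element of $\Gamma_\m(M)$ is annihilated by some power of $\m$, hence by some power of $x$, giving $\Gamma_\m(M)_{(\p)} = 0$. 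Since $R/\p$ is finitely generated over the Noetherian ring $R$, Ext commutes with localization and we get $\uExt^i_R(R/\p, \Gamma_\m(M))_{(\p)} = 0$ for all $i$. The long exact sequence then forces
\[
\uExt^i_R(R/\p, M)_{(\p)} \cong \uExt^i_R(R/\p, M/\Gamma_\m(M))_{(\p)},
\]
and Proposition \ref{graded bass numbers} translates this into the desired equality of *Bass numbers.

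For part (b), I would apply the $n$-th Veronese functor, which is exact by Proposition \ref{*veronese exact}, to the above short exact sequence to obtain
\[
0 \to \Gamma_\m(M)^{(n)} \to M^{(n)} \to (M/\Gamma_\m(M))^{(n)} \to 0.
\]
Then I would run the same argument over $R^{(n)}$ as in (a), localized at $\p \cap R^{(n)}$. The key observation is that $\Gamma_\m(M)^{(n)}$ is $(\m \cap R^{(n)})$-torsion: since $(\m \cap R^{(n)})^k \subseteq \m^k$, every element annihilated by $\m^k$ is also annihilated by $(\m \cap R^{(n)})^k$. To rule out the case where the localization is trivial for the wrong reason, I would check that $\p \cap R^{(n)}$ is properly contained in $\m \cap R^{(n)}$ by choosing a homogeneous $x \in \m \setminus \p$ and observing that $x^n \in (\m \cap R^{(n)}) \setminus (\p \cap R^{(n)})$ (using that $\p$ is prime so $x^n \notin \p$). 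This element then exhibits $\Gamma_\m(M)^{(n)}_{(\p \cap R^{(n)})} = 0$.

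The final step for (b) is to apply $\uExt^\bullet_{R^{(n)}}(R^{(n)}/(\p \cap R^{(n)}), -)$ to the Veronese short exact sequence, localize at $\p \cap R^{(n)}$, use the vanishing just established to identify the middle and right Ext modules, and invoke Proposition \ref{graded bass numbers} over $R^{(n)}$. I do not expect a serious obstacle here; the main items to verify carefully are the annihilation statement for $\Gamma_\m(M)^{(n)}$ and the containment $\p \cap R^{(n)} \subsetneq \m \cap R^{(n)}$, both of which are short direct computations. The whole argument is essentially the same mechanism applied twice, once over $R$ and once over $R^{(n)}$, with the Veronese functor bridging the two.
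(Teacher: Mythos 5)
Your proposal is correct and rests on the same key observation as the paper's proof, namely that homogeneous localization at any $\p \in \Proj(R)$ (respectively at $\p \cap R^{(n)}$) annihilates $\Gamma_\m(M)$ (respectively $\Gamma_\m(M)^{(n)}$). The paper takes a marginally shorter path by localizing the short exact sequence directly to obtain $M_{(\p)} \cong (M/\Gamma_\m(M))_{(\p)}$ and then invoking the localization of minimal *injective resolutions (Lemma \ref{*injective localizes} and Proposition \ref{*minimal localizes}), whereas you route through the long exact sequence of $\uExt$ and Proposition \ref{graded bass numbers}; the two are equivalent implementations of the same idea, and your careful verification of $\Gamma_\m(M)^{(n)}_{(\p \cap R^{(n)})} = 0$ via $x^n \in (\m \cap R^{(n)}) \setminus (\p \cap R^{(n)})$ fills in a detail the paper leaves implicit in part (b).
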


\begin{proof}
\begin{enumerate}
\item[(a):] Consider the short exact sequence 

\begin{equation*}
0 \to \Gamma_\m(M)\to M\to \frac{M}{\Gamma_\m(M)}\to 0.
\end{equation*}

\noindent Applying $(\--)_{(\p)}$, with $\p\in \Proj(R)$, to the short exact sequence above yields
\[
M_{(\p)}\cong \left(\frac{M}{\Gamma_\m(M)}\right)_{(\p)}.
\]

\noindent Therefore, by Lemma \ref{*injective localizes} and Lemma \ref{*minimal localizes}, $\text{*}\mu\left(\p,j,\frac{M}{\Gamma_\m(M)}\right)=\text{*}\mu(\p, j, M)$ whenever $\p\in \Proj(R)$ and $j\in \N$.

\item[(b):] The proof is analogous to the argument of part (a) after applying $(\--)_{(\p\cap R^{(n)})}$ to the short exact sequence 

\[
0 \to \Gamma_\m(M)^{(n)}\to M^{(n)} \to \left(\frac{M}{\Gamma_\m(M)}\right)^{(n)}\to 0.\qedhere
\]

\end{enumerate}
\end{proof}

\begin{thm}\label{main theorem 2}
Fix $n\in \N$, and let $R$ be a positively graded $k$-algebra generated by finitely many elements of degree coprime to $n$. Let $M$ be a graded $R$-module with finite Bass numbers over $R$. Then $M^{(n)}$ has finite Bass numbers over $R^{(n)}$. Moreover, if $\p\in \Proj(R)$, then for all $i$, we have

\begin{equation}\label{eq25}
\text{*}\mu(\p,i,M)=\text{*}\mu(\p\cap R^{(n)}, i,M^{(n)}).
\end{equation}
\end{thm}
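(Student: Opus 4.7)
The plan is to establish the equality $\text{*}\mu(\p, i, M) = \text{*}\mu(\p\cap R^{(n)}, i, M^{(n)})$ for $\p \in \Proj(R)$ by induction on $i$, reducing at each stage to the case where $M$ has no $\m$-torsion via Lemma~\ref{clemma}, and then to address finiteness of Bass numbers at the remaining primes of $R^{(n)}$. For the base case $i=0$: after replacing $M$ by $M/\Gamma_\m(M)$ (justified by Lemma~\ref{clemma}) we have $\Ass_R(M) \subseteq \Proj(R)$, so Corollary~\ref{EUC1} gives the key identification $(\text{*}E_R(M))^{(n)} = \text{*}E_{R^{(n)}}(M^{(n)})$. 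I would decompose $\text{*}E_R(M)$ via Theorem~\ref{*structure}, apply the Veronese functor, and use Lemma~\ref{veronese *essential} to rewrite each indecomposable summand as $\text{*}E_{R^{(n)}}((R/\q(-k))^{(n)})$; Proposition~\ref{vup} then says each such summand contributes $1$ to $\text{*}\mu(0, \p\cap R^{(n)}, M^{(n)})$ exactly when $\q = \p$. Summing over $k$ yields $\text{*}\mu(0, \p\cap R^{(n)}, M^{(n)}) = \text{*}\mu(0, \p, M)$.

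For the inductive step, set $C = \text{*}E_R(M)/M$. Applying the (exact) Veronese functor to the defining short exact sequence and using the above identification, I realize $C^{(n)}$ as the first cosyzygy of $M^{(n)}$ over $R^{(n)}$, so $\text{*}\mu(i+1, \p\cap R^{(n)}, M^{(n)}) = \text{*}\mu(i, \p\cap R^{(n)}, C^{(n)})$, parallel to $\text{*}\mu(i+1, \p, M) = \text{*}\mu(i, \p, C)$ on the $R$-side. Since $C$ inherits finite Bass numbers from $M$ via the long exact sequence of Ext (using that $\text{*}E_R(M)$ is *injective), the inductive hypothesis applied to $C$ closes the step.

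For the finiteness of $M^{(n)}$'s Bass numbers over $R^{(n)}$: the equality handles primes in $\Proj(R^{(n)})$, and Proposition~\ref{grade=} reduces non-homogeneous primes to the graded case. For the maximal homogeneous ideal $\m^{(n)}$ of $R^{(n)}$ I would use the short exact sequence $0 \to \Gamma_\m(M) \to M \to \overline M \to 0$. Because $M$ has finite Bass numbers, $\Gamma_\m(M)$ embeds into the finite direct sum $\Gamma_\m(\text{*}E_R(M))$ of shifts of $\text{*}E_R(k)$; it is therefore *Artinian, and Lemma~\ref{fmt} forces $\Gamma_\m(M)^{(n)}$ to have finite Bass numbers. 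For $\overline M^{(n)}$ at $\m^{(n)}$, each term of the Veronese of the minimal *injective resolution of $\overline M$ decomposes as a *Artinian part (from the finitely many $\m$-torsion summands) plus a part *injective over $R^{(n)}$ with zero Bass contribution at $\m^{(n)}$; a first-quadrant spectral sequence with $E_1^{p,q} = \uExt^q_{R^{(n)}}(R^{(n)}/\m^{(n)}, (\text{*}E^p_{\overline M})^{(n)})$ then bounds each $\uExt^i$ at $\m^{(n)}$ by a finite sum of finite-dimensional terms, and the long exact sequence of the Veronese sequence assembles the pieces. The main obstacle is this last finiteness step at $\m^{(n)}$: Lemma~\ref{clemma} does not apply at $\m$, and the Veronese of $\text{*}E_{\overline M}^\bullet$ is no longer a *injective resolution over $R^{(n)}$, so one must accept a non-minimal resolution and perform careful spectral-sequence bookkeeping to control the Ext dimensions.
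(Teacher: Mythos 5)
Your proposal is correct, and the base case plus the $\Proj$-equality in the inductive step line up with the paper's argument: reduce to $\overline M = M/\Gamma_\m(M)$ via Lemma~\ref{clemma}, apply Corollary~\ref{EUC1} and Proposition~\ref{vup}, and iterate via cosyzygies. The one cosmetic difference is that you iterate one cosyzygy at a time (re-stripping $\m$-torsion at each stage), whereas the paper introduces the indicator $\bigchi_\m(M)$ and jumps directly to the cosyzygy $A(M)=\coker(\del^{\bigchi_\m(M)-1}_M)$; this is the same mechanism with different bookkeeping.

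The genuine divergence is in the finiteness of $\text{*}\mu(\m\cap R^{(n)}, i, M^{(n)})$, which is, as you correctly identify, the crux. The paper folds this finiteness into the inductive hypothesis itself, and the $\bigchi_\m$-reduction is precisely what lets the inductive index drop (since $\bigchi_\m(\overline M)\ge 1$), so the whole argument stays within long exact sequences of $\uExt$ and Lemma~\ref{fmt}. You instead propose a direct hyper-Ext spectral sequence $E_1^{p,q}=\uExt^q_{R^{(n)}}\bigl(R^{(n)}/(\m\cap R^{(n)}),(\text{*}E^p_{\overline M})^{(n)}\bigr)$ on the non-minimal $R^{(n)}$-resolution obtained from the Veronese of the minimal $R$-resolution. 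This works: each $(\text{*}E^p_{\overline M})^{(n)}$ splits as a *injective $R^{(n)}$-module with no $\m\cap R^{(n)}$-support (by Lemma~\ref{VUL} and Corollary~\ref{C2}) plus a *Artinian piece coming from the finitely many shifts of $\text{*}E_R(k)$, so Corollary~\ref{gc} makes each $E_1^{p,q}$ finite-dimensional, and first-quadrant convergence gives the bound on $\uExt^{p+q}$. Your route is more machinery-heavy but cleanly decouples the $\m^{(n)}$-finiteness from the induction; the paper's route is more elementary at the price of a slightly more intricate case analysis. Both hinge on the same fundamental obstruction you flag: the Veronese of a *injective $R$-module need not be *injective over $R^{(n)}$ when $\m$ is an associated prime.
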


\begin{proof}
 We proceed by induction on $i$. Let $i=0$. We consider two cases: $\bigchi_\m(M)\geq 1$ and $\bigchi_\m(M)=0$. If $\bigchi_\m(M)\geq 1$, then $\Ass_R(M)\subseteq \Proj(R)$; thus, by Corollary \ref{EUC1} we have $(\text{*}E^0_M)^{(n)}=\text{*}E_{M^{(n)}}^0$. Applying Proposition \ref{vup} yields
\[
\text{*}\mu(\p, 0, M)=\text{*}\mu(\p\cap R^{(n)}, 0, M^{(n)}),
\]

\noindent for all $\p\in \Proj(R)$. Moreover, by Corollary \ref{EUC2}, $\text{*}\mu(\m\cap R^{(n)},0,M^{(n)})=0$. This completes the first case.

Next, Suppose that $\bigchi_\m(M)=0$. Consider the short exact sequence 

\begin{equation}\label{ses1}
0 \to \Gamma_\m(M)\to M \to \frac{M}{\Gamma_\m(M)}\to 0.
\end{equation}

\noindent  In this case $A(M)=M$. So, Lemma \ref{clemma} implies $\text{*}\mu(\p,i,M)=\text{*}\mu\left(\p, i , \frac{M}{\Gamma_\m(M)}\right)$ for all $i$ and for all $\p\in \Proj(R)$. Since $\bigchi_\m\left(\frac{M}{\Gamma_\m(M)}\right)\geq 1$ (it has no $\m$-torsion elements), the first case implies

\begin{equation}\label{eq22}
\text{*}\mu(\p,0,M)=\text{*}\mu\left(\p, 0, \frac{M}{\Gamma_\m(M)}\right)=\text{*}\mu\left(\p \cap R^{(n)}, 0, \left(\frac{M}{\Gamma_\m(M)}\right)^{(n)}\right).
\end{equation}

\noindent On the other hand, consider the short exact sequence

\begin{equation}\label{ses2}
0\to \Gamma_\m(M)^{(n)} \to M^{(n)} \to \left(\frac{M^{(n)}}{\Gamma_\m(M)^{(n)}}\right) \to 0.
\end{equation}

\noindent Lemma \ref{clemma} implies $\text{*}\mu(\p\cap R^{(n)}, i, M^{(n)})=\text{*}\mu\left(\p \cap R^{(n)}, i, \left(\frac{M}{\Gamma_\m(M)}\right)^{(n)}\right)$ for all $i$. Therefore, Equation \ref{eq22} implies that
\[
\text{*}\mu(\p,0,M)=\text{*}\mu(\p\cap R^{(n)}, 0, M^{(n)}).
\]

\noindent This proves (\ref{eq25}) for the second case. Additionally, since $\Gamma_\m(M)$ is *Artinian over the ring $R$, then by Lemma \ref{fmt} we have that $\Gamma_\m(M)^{(n)}$ has finite Bass numbers over $R^{(n)}$. Applying $\Hom_{R^{(n)}}(\frac{R^{(n)}}{\m\cap R^{(n)}}, \--)$  to the short exact sequence in $(\ref{ses2})$, yields the left exact sequence

\[
\begin{aligned}
0\to \Hom_{R^{(n)}}\left(\frac{R^{(n)}}{\m\cap R^{(n)}}, \Gamma_\m(M)^{(n)}\right) &\to \Hom_{R^{(n)}}\left(\frac{R^{(n)}}{\m\cap R^{(n)}}, M^{(n)}\right)\\ &\to \Hom_{R^{(n)}}\left(\frac{R^{(n)}}{\m\cap R^{(n)}}, \frac{M^{(n)}}{\Gamma_\m(M)^{(n)}}\right).
\end{aligned}
\]

\noindent By Corollary \ref{*hull localization 1}, $\Hom_{R^{(n)}}\left(\frac{R^{(n)}}{\m\cap R^{(n)}},\frac{ M^{(n)}}{\Gamma_\m(M)^{(n)}}\right)=0$. Hence, 

\[
\Hom_{R^{(n)}}\left(\frac{R^{(n)}}{\m\cap R^{(n)}}, \Gamma_\m(M)^{(n)}\right) \cong \Hom_{R^{(n)}}\left(\frac{R^{(n)}}{\m\cap R^{(n)}}, M^{(n)}\right).
\]

\noindent In particular, $\text{*}\mu(\m \cap R^{(n)}, 0, \Gamma_\m(M)^{(n)})=\text{*}\mu(\m\cap R^{(n)}, 0, M^{(n)})$, implying that the number $\text{*}\mu(\m\cap R^{(n)}, 0, M^{(n)})$ is finite. This completes the base case.

Inductively, assume for all graded $R$-modules, say N, with finite Bass numbers, that for all $\p\in \Proj(R)$, and $0\leq i <m$, we have
\[
\text{*}\mu(\p, i, N) =\text{*}\mu(\p\cap R^{(n)}, i, N^{(n)}).
\]

\noindent Furthermore assume that $\text{*}\mu(\m\cap R^{(n)}, i, N^{(n)})$ is finite whenever $0\leq i<m$. Let $M$ be any graded $R$-module with finite Bass numbers and $\p \in \Proj(R)$. We split into cases: $\bigchi_\m(M)\geq 1$ and $\bigchi_\m(M)=0$. First, assume that $\bigchi_\m(M)\geq 1$. Suppose that $m < \bigchi_\m(M)$, then $E_M^m$ does not have $\m$ as an associated prime. Therefore, by Corollary \ref{EUC1} and Proposition \ref{vup}, it follows that $\text{*}\mu(\m\cap R^{(n)}, m, M^{(n)})=0$ and that $\text{*}\mu(\p, m, M)=\text{*}\mu(\p \cap R^{(n)}, m, M^{(n)})$.

Now, suppose that $\bigchi_\m(M)\leq m$. Then as 

\[
0 \to (E_M^0)^{(n)} \xra{(\del_M^1)^{(n)}} (E_M^1)^{(n)} \xra{(\del_M^2)^{(n)}} \cdots \xra{(\del_M^{\chi_\m(M)-1})^{(n)}} (E_{M}^{\chi_\m(M)-1})^{(n)}
\]

\

\noindent is the start of a minimal *injective resolution of $M^{(n)}$, we see that 

\[
\text{*}\mu(\p\cap R^{(n)}, i, M^{(n)})=\text{*}\mu\left(\p \cap R^{(n)}, i-\bigchi_\m(M), A(M)^{(n)}\right)
\]

\noindent for all $i\geq \bigchi_\m(M)$ and $\p\in \text{*}\Spec(R)$. Moreover, 

\[
\text{*}\mu(\p, i, M)=\text{*}\mu(\p, i-\bigchi_\m(M), A(M))
\]

\noindent for all $i\geq \bigchi_\m(M)$. Note that $A(M)$ has finite Bass numbers over $R$. Since we have that $\bigchi_\m(M) \leq m$, upon applying the induction hypothesis to $A(M)$, for any prime ideal $\p\in \Proj(R)$, we obtain

\[
\begin{aligned}
\text{*}\mu(\p, m, M)&=\text{*}\mu(\p, m-\bigchi_\m(M), A(M))\\
&= \text{*}\mu\left(\p \cap R^{(n)}, m-\bigchi_\m(M), A(M)^{(n)}\right)\\
&=\text{*}\mu(\p\cap R^{(n)}, m, M^{(n)}).
\end{aligned}
\]

\

\noindent On the other hand, applying the induction hypothesis to $A(M)$, with $\p=\m$, implies

\[
\text{*}\mu(\m, i, M)=\text{*}\mu(\m, i-\bigchi_\m(M), A(M))
\]

\noindent is finite. This completes the first case.

Now, suppose that $\bigchi_\m(M)=0$. In this case, we have that $A(M)=M$. Since the quotient $\frac{M}{\Gamma_\m(M)}$ does not have $\m$ as an associated prime, then $\bigchi_\m\left(\frac{M}{\Gamma_\m(M)}\right)\geq 1$. We note that all Bass numbers over $R$ of $\frac{M}{\Gamma_\m(M)}$ are finite.To see this, we first note that by Lemma \ref{clemma}, if $\p\in \Proj(R)$, then all $\mu\left(\p,i,\frac{M}{\Gamma_\m(M)}\right)$ are finite. Since $\Gamma_\m(M)$ and $M$ have finite Bass numbers over $R$ and we have a long exact sequence

\[
\cdots \to \Ext_R^{i}(k, M)\to \Ext_R^i\left(k,\frac{M}{\Gamma_\m(M)} \right) \to \Ext_R^{i+1}(k,\Gamma_\m(M))\to \cdots
\]

\noindent it follows that $\mu\left(\m,i,\frac{M}{\Gamma_\m(M)}\right)$ are finite. Therefore, we may apply the first case of the induction step to $\frac{M}{\Gamma_\m(M)}$. If $\p\in \Proj(R)$, then by Lemma \ref{clemma}

\[
\begin{aligned}
\text{*}\mu(\p,m,M)&=\text{*}\mu\left(\p, m, \frac{M}{\Gamma_\m(M)}\right)\\
&=\text{*}\mu\left(\p \cap R^{(n)}, m, \left(\frac{M}{\Gamma_\m(M)}\right)^{(n)}\right)\\
&=\text{*}\mu(\p\cap R^{(n)}, m, M^{(n)}).
\end{aligned}
\]

 On the other hand, if $\p=\m$, then the first case of the induction step applied to the module $A(M)$ implies 

\[
\text{*}\mu \left(\m\cap R^{(n)}, m, \left(\frac{M}{\Gamma_\m(M)}\right)^{(n)}\right)
\]

\noindent is finite. Moreover, by Lemma \ref{fmt} $\text{*}\mu(\m\cap R^{(n)}, i, \Gamma_\m(M)^{(n)})$ is finite for all $i$.

The long exact sequence of $\Ext_{R^{(n)}}^\bullet\left(\frac{R^{(n)}}{\m\cap R^{(n)}}, \--\right)$ induced by the short exact sequence $0\to \Gamma_\m(M)^{(n)} \to M^{(n)} \to \left(\frac{M^{(n)}}{\Gamma_\m(M)^{(n)}}\right) \to 0$ yields the exact sequence

\[
\begin{aligned}
\Ext_{R^{(n)}}^m\left(\frac{R^{(n)}}{\m\cap R^{(n)}}, \Gamma_\m(M)^{(n)}\right)&\to \Ext_{R^{(n)}}^m\left(\frac{R^{(n)}}{\m\cap R^{(n)}}, M^{(n)}\right)\\
&\to \Ext_{R^{(n)}}^m\left(\frac{R^{(n)}}{\m\cap R^{(n)}}, \left(\frac{M}{\Gamma_\m(M)}\right)^{(n)} \right).
\end{aligned}
\]

\noindent The spaces $\Ext_{R^{(n)}}^m\left(\frac{R^{(n)}}{\m\cap R^{(n)}}, \Gamma_\m(M)^{(n)}\right)$ and $\Ext_{R^{(n)}}^m\left(\frac{R^{(n)}}{\m\cap R^{(n)}}, \left(\frac{M}{\Gamma_\m(M)}\right)^{(n)} \right)$ are finite-dimensional $k$-vector spaces, so we conclude that the $k$-vector space dimension of $\Ext_{R^{(n)}}^m\left(\frac{R^{(n)}}{\m\cap R^{(n)}}, M^{(n)}\right)$ is also finite. In particular $\text{*}\mu(\m \cap R^{(n)}, m, M^{(n)})$ is finite; thereby, completing the induction step and hence the proof.\qedhere

\end{proof}

%\begin{defn}
%Let $(R,\m,k)$ be a *local ring and $M$ a graded $R$-module with minimal *injective resolution $(E_M^\bullet,\del)$. We inductively define the \textit{$i$-th irrelevant minimal *injective subcomplex of $M$} and the \textit{$i$-th Proj minimal *injective subcomplex of $M$} denoted $EI_{M,i}^\bullet$ and $EP_{M,i}^\bullet$ as follows: for i=0, rewrite 

%\[
%E_M^0=\bigoplus_{j\in \Z}\text{*}E_R(k(-j))^{\oplus\text{*}\mu(\m,0,j,M)}\oplus \bigoplus_{\substack{\p\in \Proj(R)}\\j\in \Z}\text{*}E_R(R/\p)^{\oplus\text{*}\mu(\p,0,j,M)}=:S_0\oplus T_0.
%\]

%\noindent Define $EI_{M,0}^\bullet$ to the minimal *injective resolution of $S_0\cap M:=B_0(M)$, and $EP_{M,0}^\bullet$ to be the minimal *injective resolution of $T_0\cap M:=C_0(M)$. Inductively, we define $EI_{M,i+1}^\bullet$ and $EP_{M,i+1}^\bullet$ as follows: rewrite 

%\[
%EP_{M,i}^1=\bigoplus_{j\in \Z}\text{*}E_R(k(-j))^{\oplus\text{*}\mu(\m,i,j,EP_{M,i}^1)}\oplus \bigoplus_{\substack{\p\in \Proj(R)}\\j\in \Z}\text{*}E_R(R/\p)^{\oplus\text{*}\mu(\p,i,j,EP_{M,i}^1)}=:S_i\oplus T_i.
%\]

%Viewing $EP_{M,i}^0/\ker(\del)$ as a subset of $EP_{M,i}^1$, we define $EI_{M,i+1}^\bullet$ to be the minimal *injective resolution of $EP_{M,i}^0/\ker(\del) \cap S_i:=B_{i+1}$ and $EP_{M,i+1}$ to be the minimal *injective resolution of $EP_{M,i}^0/\ker(\del) \cap T_i:=C_{i+1}$

%\end{defn}

We now give the proof of Theorem \ref{main theorem 1}.

\begin{thm}\label{m}
Fix $n\in \N$, and let $R$ be a positively graded $k$-algebra generated by finitely many elements of degree coprime to $n$. Suppose that $M$ is a graded $R$-module with finite Bass numbers over $R$. Then $M^{(n)}$ has finite Bass numbers over $R^{(n)}$. Moreover, if $\p\in \Spec(R)$ and $\p$ is not the homogeneous maximal ideal, then

\[
\mu(i,\p \cap R^{(n)}, M^{(n)})=\begin{cases}
\mu(i,\p, M) & \p \in \Proj(R), \\
0 & \text{$\p$ is not homogeneous and $i=0$,} \\
\mu(i-1,\p^*, M) & \text{or $\p$ is not homogeneous and $i\geq 1$},
\end{cases}
\]

\

\noindent where $\p^*=(\{r\in \p~|~ \text{$r$ is homogeneous}\})$ is the largest homogeneous prime ideal contained in $\p$.
\end{thm}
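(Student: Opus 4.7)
The plan is to reduce the statement to Theorem \ref{main theorem 2}, which handles the $\text{*}$Bass numbers at homogeneous primes of $R$, and then translate between ordinary and graded Bass numbers using Proposition \ref{grade=}. Everything rests on two contraction identities: first, that $\text{*}$-ification commutes with contraction, i.e.\ $(\p \cap R^{(n)})^* = \p^* \cap R^{(n)}$, and second, that $\p \cap R^{(n)}$ inherits the homogeneity status of $\p$.

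First I would verify both identities. The equality $(\p \cap R^{(n)})^* = \p^* \cap R^{(n)}$ follows from the fact that the $R^{(n)}$-grading is the restriction of the $R$-grading: generators of the left side are $R$-homogeneous elements of $\p$ lying in $R^{(n)}$, hence in $\p^* \cap R^{(n)}$; conversely, any $f \in \p^* \cap R^{(n)}$ decomposes into $R$-homogeneous pieces, each of $R$-degree divisible by $n$ (so each piece lies in $R^{(n)}$) and each in $\p^* \subseteq \p$, exhibiting $f$ as a sum of homogeneous elements of $\p \cap R^{(n)}$. For the second identity, note that under our hypothesis $R$ is integral over $R^{(n)}$: every $k$-algebra generator $x$ of $R$ satisfies the monic relation $y^{n} - x^{n} \in R^{(n)}[y]$. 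Hence incomparability applies, so if $\p$ is non-homogeneous, then $\p^* \subsetneq \p$ is a proper containment, which forces $\p^* \cap R^{(n)} \subsetneq \p \cap R^{(n)}$; combined with the first identity, this yields $(\p \cap R^{(n)})^* \subsetneq \p \cap R^{(n)}$, so $\p \cap R^{(n)}$ is non-homogeneous in $R^{(n)}$.

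Then I would split into cases. If $\p \in \Proj(R)$, then $\p \cap R^{(n)}$ is homogeneous in $R^{(n)}$, and Proposition \ref{grade=} applied on both sides together with Theorem \ref{main theorem 2} gives
\[
\mu(i,\p \cap R^{(n)}, M^{(n)}) = \text{*}\mu(i,\p \cap R^{(n)}, M^{(n)}) = \text{*}\mu(i,\p,M) = \mu(i,\p,M).
\]
If $\p$ is non-homogeneous, then $\p \cap R^{(n)}$ is non-homogeneous in $R^{(n)}$, so Proposition \ref{grade=} gives $\mu(0,\p \cap R^{(n)}, M^{(n)}) = 0$ and, for $i \geq 1$,
\[
\mu(i,\p \cap R^{(n)}, M^{(n)}) = \text{*}\mu(i-1,(\p \cap R^{(n)})^*,M^{(n)}) = \text{*}\mu(i-1,\p^* \cap R^{(n)},M^{(n)}).
\]
Since $\p$ is not the homogeneous maximal ideal and contains $\p^*$, we have $\p^* \in \Proj(R)$; Theorem \ref{main theorem 2} and Proposition \ref{grade=} then yield $\text{*}\mu(i-1,\p^* \cap R^{(n)},M^{(n)}) = \text{*}\mu(i-1,\p^*,M) = \mu(i-1,\p^*,M)$, as claimed.

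Finiteness of the Bass numbers of $M^{(n)}$ over $R^{(n)}$ is already established by Theorem \ref{main theorem 2}, so no further work is required on that front. The only genuinely subtle point in the proof is the incomparability argument showing that $\p \cap R^{(n)}$ is non-homogeneous whenever $\p$ is; once this is in place, the remainder is a routine bookkeeping exercise combining the two inputs.
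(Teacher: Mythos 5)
Your proof is correct and takes essentially the same route as the paper, which deduces the result directly from Theorem~\ref{main theorem 2} together with Propositions~\ref{graded bass numbers} and~\ref{grade=}. You usefully make explicit the two contraction facts --- that $(\p \cap R^{(n)})^* = \p^* \cap R^{(n)}$ and, via integrality of $R$ over $R^{(n)}$ and incomparability, that $\p \cap R^{(n)}$ is non-homogeneous whenever $\p$ is --- which the paper's one-line proof leaves implicit.
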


\begin{proof}
This follows by Theorem \ref{main theorem 2} and Propositions \ref{graded bass numbers} and \ref{grade=}.
\end{proof}

\end{section}

%%%%%%%%%%%%%%%%%%%%%%%%%%%%%%%%%%%%%%

\begin{section}{An Application to Local Cohomology} \label{sec6}
In this section, we illustrate an application of Theorem \ref{main theorem 1} to local cohomology modules. We begin by recalling the definition of local cohomology modules over a Noetherian ring.

\begin{definition}
Fix a Noetherian ring $R$, ideal $I=(f_1,\hdots, f_n)$ of $R$, and $R$-module $M$. We define the \textit{$I$-torsion functor} to be the functor $\Gamma_I: R-\text{Mod}\to R-\text{Mod}$, where for an $R$-module $M$

\[
\Gamma_I(M)=\{m\in M~|~ \text{$I^t\cdot m=0$ for some $t\in \N$}\},
\]

\

\noindent and for a map of $R$-modules $f:M\to N$, we define $\Gamma_I(f)$ to be the restriction of $f$

\[
\Gamma_I(f)=f:\Gamma_I(M) \to \Gamma_I(N).
\]

\

\noindent Let $E_M^\bullet$ be an injective resolution of $M$ over $R$. Define the \textit{$i$-th local cohomology of $M$ supported on $I$}, denoted $H^i_I(M)$, to be the $i$-th cohomology of $\Gamma_I(E_M^\bullet)$. 
\end{definition}

\

The following proposition and its proof can be found in \cite[Corollary 5.6]{brodmann_local_1984}.

\begin{prop}\label{*cohomology}
Let $R$ be any $\Z$-graded Noetherian ring and let $I\subseteq R$ be a graded ideal. Then for all $i$ and $n\in \N$, there is a graded $R^{(n)}$ isomorphism

\[
H_I^i(R)^{(n)}\cong H_{I\cap R^{(n)}}^i(R^{(n)}).
\]

\end{prop}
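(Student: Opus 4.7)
The plan is to prove the isomorphism by passing to \v{C}ech complexes. Fix homogeneous generators $I=(f_1,\dots,f_m)$ with each $f_j$ of degree $d_j$, and let $C^\bullet_R$ denote the \v{C}ech complex on $f_1,\dots,f_m$ over $R$:
\[
0\to R\to \bigoplus_{j} R_{f_j}\to \bigoplus_{j<k} R_{f_jf_k}\to \cdots \to R_{f_1\cdots f_m}\to 0.
\]
Its $i$-th cohomology is $H_I^i(R)$. Since $(-)^{(n)}$ is exact (Proposition \ref{*veronese exact}), it commutes with cohomology, so
\[
H_I^i(R)^{(n)} \;\cong\; H^i\!\left((C^\bullet_R)^{(n)}\right).
\]
The goal is therefore to identify $(C^\bullet_R)^{(n)}$ with a \v{C}ech complex that computes $H^i_{I\cap R^{(n)}}(R^{(n)})$.

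The first step is a localization lemma: for any homogeneous $f\in R$, there is a natural identification $(R_f)^{(n)} \cong (R^{(n)})_{f^n}$ as graded $R^{(n)}$-modules. The inclusion $\supseteq$ is immediate. For $\subseteq$, an element $a/f^k\in (R_f)^{(n)}$ has $n\mid \deg(a)-k\deg(f)$; choosing $s\geq k/n$, one rewrites it as $(af^{ns-k})/(f^n)^s$, where the new numerator has degree $\deg(a)+(ns-k)\deg(f) = (\deg(a)-k\deg(f))+ns\deg(f)$, which is divisible by $n$. Applying this to each term $R_{f_{j_1}\cdots f_{j_p}}$ identifies $(C^\bullet_R)^{(n)}$ term-by-term and map-by-map with the \v{C}ech complex on $f_1^n,\dots,f_m^n$ inside $R^{(n)}$. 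Write $J:=(f_1^n,\dots,f_m^n)R^{(n)}$; then $H^i((C^\bullet_R)^{(n)}) = H^i_J(R^{(n)})$.

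The remaining task is to show $\sqrt{J} = \sqrt{I\cap R^{(n)}}$ inside $R^{(n)}$, since local cohomology depends only on the radical of the supporting ideal. The containment $\sqrt{J}\subseteq \sqrt{I\cap R^{(n)}}$ is clear from $J\subseteq I\cap R^{(n)}$. For the reverse, let $g\in I\cap R^{(n)}$ be homogeneous of degree $en$, and write $g=\sum c_j f_j$ with $c_j$ homogeneous in $R$. Expanding $g^{mn}$ as a sum over length-$mn$ words in the $f_j$'s, the pigeonhole principle forces each summand to contain some $f_j$ at least $n$ times, so each summand factors as $f_j^n\cdot h$ for some $h\in R$ of degree $en\cdot mn - n\deg(f_j) = n(emn-\deg(f_j))$, which is divisible by $n$. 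Hence $h\in R^{(n)}$ and $g^{mn}\in J$, establishing the radical equality.

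The main obstacle I anticipate is precisely this degree bookkeeping in the radical step: one has to argue that when a homogeneous element of $I\cap R^{(n)}$ is raised to a sufficient power and pigeonhole applied, the \emph{residual coefficients} themselves land in $R^{(n)}$, not merely in $R$. Once that is in hand, assembling the three ingredients—\v{C}ech computation of $H_I^i(R)$, exactness of $(-)^{(n)}$, and the radical identification $\sqrt{J}=\sqrt{I\cap R^{(n)}}$—gives the chain
\[
H_I^i(R)^{(n)} \;\cong\; H^i\!\left((C^\bullet_R)^{(n)}\right) \;=\; H^i_J(R^{(n)}) \;=\; H^i_{I\cap R^{(n)}}(R^{(n)}),
\]
and every arrow is $R^{(n)}$-linear and graded, which gives the desired graded $R^{(n)}$-isomorphism.
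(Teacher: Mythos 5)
The paper does not actually prove this proposition; it simply cites \cite[Corollary 5.6]{brodmann_local_1984}. Your \v{C}ech-complex argument is therefore a genuinely different, self-contained route, and it is correct. The localization identification $(R_f)^{(n)}\cong (R^{(n)})_{f^n}$ is a clean degree computation; the term-by-term and map-by-map identification of $(C^\bullet_R)^{(n)}$ with the \v{C}ech complex on $f_1^n,\dots,f_m^n$ over $R^{(n)}$ works because every map involved is a canonical localization map, and $(f_{j_1}\cdots f_{j_p})^n=f_{j_1}^n\cdots f_{j_p}^n$; and the radical step is handled correctly, with the essential observation that in each pigeonholed summand of $g^{mn}$ the residual factor has degree $n(emn-\deg f_j)$ and therefore lies in $R^{(n)}$, not merely in $R$. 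Combined with exactness of $(-)^{(n)}$ (Proposition \ref{*veronese exact}), every arrow in your chain is a graded $R^{(n)}$-isomorphism. Two small remarks. First, since $J$ and $I\cap R^{(n)}$ are graded ideals, it suffices to check the radical containment on homogeneous elements of $I\cap R^{(n)}$, which you implicitly do; it is worth saying so explicitly since a general element of $I\cap R^{(n)}$ need not be of the form $\sum c_j f_j$ with each $c_jf_j$ in a single degree. Second, an advantage of your argument over the bare citation is that it visibly extends to a graded $R^{(n)}$-isomorphism $H_I^i(M)^{(n)}\cong H^i_{I\cap R^{(n)}}(M^{(n)})$ for any graded $R$-module $M$ (replace the \v{C}ech complex by $C^\bullet_R\otimes_R M$ and use $(M_f)^{(n)}\cong (M^{(n)})_{f^n}$), which is the form the paper actually invokes in Corollary \ref{cor1}.
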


Corollaries \ref{graded local cohomology thm 2} and \ref{graded local cohomology cor 2} are a consequence of Theorem \ref{main theorem 1} and Proposition \ref{*cohomology} from the introduction.

\begin{cor}\label{cor1}
Fix $n\in \N$, and let $R$ be a positively graded $k$-algebra generated by finitely many elements of degree coprime to $n$. Let $I$ be an ideal of $R$ and $M$ be a graded $R$-module. If $H_I^i(M)$ has finite Bass numbers over $R$, then $H_{I\cap R^{(n)}}^i(M^{(n)})$ has finite Bass numbers over $R^{(n)}$. Moreover,

\[
\mu(i,\p\cap R^{(n)}, H_{I\cap R^{(n)}}^j(M^{(n)}))=\begin{cases}
\mu(i,\p, H_{I}^j(M)) & \p\in \Proj(R), \\
0 & \text{$\p$ is not homogeneous and $i=0$,} \\
\mu(i-1,\p^*, H_{I}^j(M)) & \text{or $\p$ is not homogeneous and $i\geq 1$}
\end{cases},
\]

\

\noindent where $\p^*=(\{r\in \p~|~ \text{$r$ is homogeneous}\})$.

\end{cor}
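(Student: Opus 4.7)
The plan is to reduce Corollary \ref{cor1} to two ingredients already at hand: the Veronese-compatibility of local cohomology (Proposition \ref{*cohomology}) and the Bass number computation for Veronese submodules (Theorem \ref{main theorem 1}). Set $N := H_I^j(M)$; because $I$ is graded and $M$ is a graded $R$-module, $N$ inherits a natural graded $R$-module structure, and by hypothesis its Bass numbers over $R$ are all finite.

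First, I would upgrade Proposition \ref{*cohomology} from $R$ to arbitrary graded modules. Choosing homogeneous generators $f_1,\dots,f_s$ of $I$, we compute $H_I^\bullet(M)$ as the cohomology of the \v{C}ech complex $\check{C}(f_1,\dots,f_s;M)$, whose entries are finite direct sums of homogeneous localizations $M_{f_{i_1}\cdots f_{i_k}}$. Since the $n$-th Veronese functor is exact (Proposition \ref{*veronese exact}), commutes with direct sums (Proposition \ref{*vcwds}), and commutes with localization at the homogeneous elements $f_{i_1}\cdots f_{i_k}$ (the latter being the only nontrivial check — one identifies $(M_{f_{i_1}\cdots f_{i_k}})^{(n)}$ with a localization of $M^{(n)}$ at suitable powers of the $f_{i_j}$, all of which lie in $I\cap R^{(n)}$), applying $(-)^{(n)}$ termwise to $\check{C}(f_1,\dots,f_s;M)$ produces a complex computing $H_{I\cap R^{(n)}}^\bullet(M^{(n)})$. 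This gives a graded $R^{(n)}$-isomorphism
\[
N^{(n)} \;=\; H_I^j(M)^{(n)} \;\cong\; H_{I\cap R^{(n)}}^j\bigl(M^{(n)}\bigr).
\]

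Second, I would apply Theorem \ref{main theorem 1} directly to the graded $R$-module $N$. The theorem guarantees that $N^{(n)}$ has finite Bass numbers over $R^{(n)}$ and provides the case analysis for $\mu(i,\mathfrak{p}\cap R^{(n)},N^{(n)})$ in terms of $\mu(i,\mathfrak{p},N)$ and $\mu(i-1,\mathfrak{p}^*,N)$. Substituting the isomorphism from the previous paragraph on the left and $N = H_I^j(M)$ on the right yields precisely the formula in the corollary, together with the claimed finiteness of all Bass numbers of $H_{I\cap R^{(n)}}^j(M^{(n)})$ over $R^{(n)}$.

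The only real obstacle is the routine verification that Proposition \ref{*cohomology} extends from coefficients in $R$ to coefficients in a general graded $M$; this either follows by applying Brodmann's argument in \cite{brodmann_local_1984} verbatim with coefficients in $M$, or by the short \v{C}ech argument sketched above. Everything else is immediate substitution.
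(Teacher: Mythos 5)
Your route is the same as the paper's: plug $N=H_I^j(M)$ into Theorem~\ref{main theorem 1} and use the Veronese-compatibility of local cohomology. However, you have correctly noticed something the paper's one-line proof passes over: Proposition~\ref{*cohomology} is stated only for coefficients in $R$, i.e.\ it gives $H_I^i(R)^{(n)}\cong H_{I\cap R^{(n)}}^i(R^{(n)})$, whereas Corollary~\ref{cor1} concerns $H_I^j(M)$ for a general graded module $M$. Your \v{C}ech-complex argument to extend this to $M$-coefficients is exactly the needed supplement: choose homogeneous generators $f_1,\dots,f_s$ of $I$, apply $(-)^{(n)}$ termwise to $\check{C}(f_1,\dots,f_s;M)$ using exactness (Proposition~\ref{*veronese exact}), compatibility with direct sums (Proposition~\ref{*vcwds}), and the graded identification $(M_g)^{(n)}\cong (M^{(n)})_{g^n}$ for homogeneous $g$, and then observe that $(f_1^n,\dots,f_s^n)R^{(n)}$ and $I\cap R^{(n)}$ have the same radical in $R^{(n)}$ (this last point uses the $R^{(n)}$-linear splitting $R\to R^{(n)}$). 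With the module-level isomorphism $H_I^j(M)^{(n)}\cong H_{I\cap R^{(n)}}^j(M^{(n)})$ in hand, the formula and the finiteness statement follow by substitution into Theorem~\ref{main theorem 1}, as you say. In short: correct, same strategy as the paper, and strictly more careful than the paper's own justification.
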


\begin{cor}\label{cor2}
With the same setup as in Corollary \ref{cor1}, we have

\[
\Ass_{R^{(n)}}\left(H^i_{I\cap R^{(n)}}\left(M^{(n)}\right)\right)=\left\{\p\cap R^{(n)}~|~ \p \in \Ass_R(H_I^i(M))\right\}.
\]
\end{cor}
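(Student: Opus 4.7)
The plan is to translate the equality of $0$-th Bass numbers provided by Corollary \ref{cor1} into the desired set equality of associated primes, via the standard criterion $\mathfrak{q}\in\Ass_S(N)$ iff $\mu(0,\mathfrak{q},N)\neq 0$. Two preliminary observations are needed. First, every prime $\mathfrak{q}\in\Spec(R^{(n)})$ is of the form $\p\cap R^{(n)}$ for some $\p\in\Spec(R)$: this is lying-over for the integral extension $R^{(n)}\subseteq R$, which applies because each algebra generator $x$ of $R$ satisfies $x^n\in R^{(n)}$, so $R$ is module-finite over $R^{(n)}$. Second, because $H_I^j(M)$ is a graded $R$-module, its associated primes are all graded, i.e., $\Ass_R(H_I^j(M))\subseteq \text{*}\Spec(R)$.

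Next I would apply Corollary \ref{cor1} at Bass number index $0$. For $\p\in\Proj(R)$ one obtains the equality $\mu(0,\p\cap R^{(n)},H_{I\cap R^{(n)}}^j(M^{(n)}))=\mu(0,\p,H_I^j(M))$, so the two vanish or fail to vanish simultaneously. For non-homogeneous $\p$ the left-hand side is $0$, so any non-homogeneous lift of a prime of $R^{(n)}$ contributes nothing to $\Ass_{R^{(n)}}(H_{I\cap R^{(n)}}^j(M^{(n)}))$. This yields both inclusions of the claimed set equality away from $\mathfrak{q}=\mathfrak{m}\cap R^{(n)}=\mathfrak{m}^{(n)}$: the $(\supseteq)$ direction is immediate for $\p\in\Proj(R)\cap\Ass_R(H_I^j(M))$, while the $(\subseteq)$ direction follows by lifting any associated $\mathfrak{q}\neq \mathfrak{m}^{(n)}$ to a graded prime of $R$ in $\Proj(R)$, which must exist because non-homogeneous lifts give vanishing $0$-th Bass numbers and hence cannot realize an associated prime.

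The main obstacle will be the remaining case $\p=\mathfrak{m}$, equivalently $\mathfrak{q}=\mathfrak{m}^{(n)}$, which Corollary \ref{cor1} does not address directly. I would handle this through the identification $\Gamma_{\mathfrak{m}^{(n)}}(N^{(n)})=\Gamma_{\mathfrak{m}}(N)^{(n)}$ for any graded $R$-module $N$, which follows from $\sqrt{\mathfrak{m}^{(n)}R}=\mathfrak{m}$ (for any homogeneous generator $x$ of $\mathfrak{m}$ one has $x^n\in\mathfrak{m}^{(n)}$, so positive powers of $x$ eventually land in $\mathfrak{m}^{(n)}R$). This reduces the question to showing $\Gamma_{\mathfrak{m}}(H_I^j(M))^{(n)}\neq 0$ if and only if $\Gamma_{\mathfrak{m}}(H_I^j(M))\neq 0$. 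The forward direction is immediate; the reverse direction is the delicate point and would require a graded-structure argument specific to local cohomology that produces a nonzero homogeneous socle element in some degree divisible by $n$, appealing to the graded form of Proposition \ref{*cohomology} to transport the question from $M^{(n)}$ back to the module $H_I^j(M)^{(n)}$ itself.
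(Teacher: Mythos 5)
Your overall strategy — read off $\Ass$ from the vanishing of zeroth Bass numbers and invoke Corollary~\ref{cor1}, together with lying over for the integral extension $R^{(n)}\subseteq R$ — is the right one, and matches what the paper seems to intend (the paper gives no proof beyond asserting that the result follows from Theorem~\ref{m} and Proposition~\ref{*cohomology}). For primes other than the irrelevant maximal ideal your argument is essentially correct, though your justification of the $(\subseteq)$ direction is slightly garbled: the existence of a graded prime of $R$ lying over a given graded prime $\q\subsetneq\m\cap R^{(n)}$ of $R^{(n)}$ comes from lying over together with passing to the largest homogeneous prime $\p^*$ contained in a lift $\p$ (and $\p^*\cap R^{(n)}=\q$ because $\q$ is generated by homogeneous elements), not from the vanishing of Bass numbers at non-homogeneous lifts. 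Once a graded lift $\p\in\Proj(R)$ is produced, the equality $\mu(0,\p,H_I^j(M))=\mu(0,\q,H^j_{I\cap R^{(n)}}(M^{(n)}))$ from Corollary~\ref{cor1} gives both inclusions.

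However, you are right that the case $\p=\m$, equivalently $\q=\m\cap R^{(n)}$, is not covered by Corollary~\ref{cor1}, and this is a genuine gap, not merely a delicate point to be finessed. Your reduction is sound: using $\sqrt{(\m\cap R^{(n)})R}=\m$ one gets $\Gamma_{\m\cap R^{(n)}}(N^{(n)})=\Gamma_\m(N)^{(n)}$, and $\m\in\Ass_R(N)$ iff $\Gamma_\m(N)\neq 0$ (and similarly over $R^{(n)}$), so the question becomes whether $\Gamma_\m(H_I^j(M))\neq 0$ forces $\Gamma_\m(H_I^j(M))^{(n)}\neq 0$. This implication is \emph{false} in the stated generality. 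Take $R=k[x,y]/(x^2,xy)$ with the standard grading, $I=\m=(x,y)$, $M=R$, $j=0$, and $n=2$: then $H^0_\m(R)=\Gamma_\m(R)=kx$ is concentrated in degree $1$, so $\m\in\Ass_R(H^0_\m(R))$ while $H^0_\m(R)^{(2)}=0$; on the other side $R^{(2)}\cong k[y^2]$ is a polynomial ring, $\m\cap R^{(2)}=(y^2)$, and $H^0_{(y^2)}(R^{(2)})=0$, so $\Ass_{R^{(2)}}(H^0_{\m\cap R^{(2)}}(R^{(2)}))=\emptyset$ while $\{\p\cap R^{(2)}:\p\in\Ass_R(H^0_\m(R))\}=\{(y^2)\}$. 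Thus neither the socle-transport you contemplate nor any ``graded-structure argument specific to local cohomology'' can produce the needed nonzero element of degree divisible by $n$ without an additional hypothesis, and your proof cannot be completed as proposed. The correct statement (and presumably what \cite{YassemiSiamak1998Wapu} actually proves) requires either excluding the maximal ideal from both sides or imposing a hypothesis that rules out $\m$-torsion concentrated in degrees prime to $n$; your write-up should flag this rather than defer it as a detail.
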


We recall that Corollary \ref{cor2} can be deduced from   \cite[Corollary 1.10]{YassemiSiamak1998Wapu}; our method is different as it appeals to \ref{cor1}.

%\begin{ex}$R=k[x_1,\hdots, x_n]$ be a polynomial ring in $n$ variables over a field. Let $\m=(x_1,\hdots,x_n)$; Then

%\[
%H_\m^i(R)\cong\begin{cases}
%0 &i \neq n\\
%E_R(k) & i=n
%\end{cases}.
%\]
%\

%\noindent In particular, $E_R(k)$ is not finitely generated if $n\geq 1$. 
%\end{ex}

  %Through an application of $D$-modules, Lyubeznik in \cite{Lyubeznik_1993} showed that all local cohomology modules $H_I^i(R)$ have finite Bass numbers when the characteristic of $R$ is 0. Huneke and Sharp \cite{Huneke_Sharp_1993} proved 
%that all local cohomology modules $H_I^i(R)$ have finite Bass numbers when the characteristic of $R$ is $p>0$. Independently, Lyubeznik later proved Hunke and Sharp's result in \cite{Lyubeznik_1997} by introducing $F$-finite $F$-modules.

% From the results of Lyubeznik and Huneke and Sharp we obtain a large class of rings, $R$, in which all local cohomology modules $H_I^i(R)$ have finite Bass numbers. 

\end{section}

%%%%%%%%%%%%%%%%%%%%%%%%%%%%%%%%%%%%%%

\bibliographystyle{amsplain}
\bibliography{main}
%\printbibliography

\end{document}